\documentclass[a4paper]{amsart}
\usepackage{graphicx}
\usepackage[colorlinks, linkcolor= blue, citecolor= red]{hyperref}
\usepackage{pdfsync}
\usepackage{color}

\usepackage{lmodern}

\usepackage[T1]{fontenc}
\usepackage[utf8]{inputenc}

\usepackage[all]{xy}


\usepackage{amsmath, amssymb, amsfonts, amscd, amsthm, mathrsfs}

\renewcommand{\phi}{\varphi}
\newcommand{\C}{\mathbb{C}}
\newcommand{\z}{\mathbb{Z}}

\newcommand{\q}{\mathbb{Q}}
\newcommand{\f}{\mathbb{F}}

\newcommand{\p}{\mathbb{P}^1}

\newcommand{\sets}{{\mathfrak{Sets}_{xy}}}

\newcommand{\etq}{\mathfrak{Etale}(\bar\q (x))}
\newcommand{\cat}{\mathfrak{Dessins}}
\newcommand{\ct}{\mathfrak{C}}

\newcommand{\qb}{ {\overline{\q}} }
\newcommand{\gal}{\operatorname{Gal}(\qb / \q)}

\newcommand{\gt}{\mathcal{GT}}
\newcommand{\gtz}{\gt_{\!\!1} }
\newcommand{\rGT}{\widehat{\operatorname{GT}}}

\newcommand{\bs}{\backslash}
\newcommand{\gb}{\overline{G}}
\newcommand{\pairs}{\mathscr{P}}
\newcommand{\pc}{\pairs_c}

\renewcommand{\bar}{\overline}
\newcommand{\A}{\mathscr{A}}
\newcommand{\s}{\mathscr{S}}

\newcommand{\gap}[1]{{\tt #1}}




\newtheoremstyle{pedro}{}{}{\itshape}{}{\sc}{~--}{ }{\thmname{#1}\thmnumber{ #2}\thmnote{ (#3)}}

\newtheoremstyle{pedrodef}{}{}{}{}{\sc}{~--}{ }{\thmname{#1}\thmnumber{ #2}\thmnote{ (#3)}}

\theoremstyle{pedro}
\newtheorem{lem}{Lemma}[section]

\newtheorem{thm}[lem]{Theorem}

\newtheorem{prop}[lem]{Proposition}

\newtheorem{coro}[lem]{Corollary}

\theoremstyle{remark}

\newtheorem{rmk}[lem]{Remark}

\theoremstyle{pedrodef}

\title[Computations with~$\gt$]{The
  Grothendieck-Teichmüller group \\of a finite group and $G$-dessins d'enfants}

\author{Pierre Guillot}

\address{
Universit\'{e} de Strasbourg \& CNRS\\
Institut de Recherche Math\'{e}matique Avanc\'{e}e\\
7~Rue Ren\'{e} Descartes\\
67084 Strasbourg, France}

\email{guillot@math.unistra.fr}

\let\oldtocsection=\tocsection
\let\oldtocsubsection=\tocsubsection
\let\oldtocsubsubsection=\tocsubsubsection

\renewcommand{\tocsection}[2]{\hspace{0em}\oldtocsection{#1}{#2}}
\renewcommand{\tocsubsection}[2]{\hspace{2em}\oldtocsubsection{#1}{#2}}
\renewcommand{\tocsubsubsection}[2]{\hspace{2em}\oldtocsubsubsection{#1}{#2}}


\emergencystretch = 0.5em
\numberwithin{equation}{section}


\begin{document}

\maketitle

\begin{abstract}
For each finite group~$G$, we define the {\em Grothendieck-Teichmüller group} of~$G$, denoted~$\gt(G)$, and explore its properties. The theory of dessins d'enfants shows that the inverse limit of~$\gt(G)$ as~$G$ varies can be identified with a group defined by Drinfeld and containing~$\gal$. 

We give in particular an identification of~$\gt(G)$, in the case when~$G$ is simple and non-abelian, with a certain very explicit group of permutations that can be analyzed easily. With the help of a computer, we obtain precise information for~$G= PSL_2(\f_q)$ when $q \in \{ 4,$ $ 7,$ $ 8,$ $ 9,$ $ 11,$ $ 13,$ $ 16,$ $ 17,$ $ 19 \}$, and we treat~$A_7$, $PSL_3(\f_3)$ and~$M_{11}$. 

In the rest of the paper we give a conceptual explanation for the technique which we use in our calculations. It turns out that the classical action of the Grothendieck-Teichmüller group on dessins d'enfants can be refined to an action on ``$G$-dessins'', which we define, and this elucidates much of the first part.

\bigskip
\noindent{\bfseries Status:} this version should be nearly identical to that which is to appear in the proceedings volume {\em Symmetry in Graphs, Maps and Polytopes} (SIGMAP), Springer. (Precise reference not available just now.)
\end{abstract}

\section{Introduction}

Suppose that~$\Gamma $ is a finite group, generated by two distinguished elements~$x$ and~$y$, and such that \begin{enumerate}
\item[(i)] $\Gamma $ has an automorphism~$\theta $ such that~$\theta (x) = y$ and~$\theta (y) = x$,
\item[(ii)] $\Gamma$ has an automorphism~$\delta $ such that~$\delta (x)=z$ and~$\delta (y) = y$, where~$z$ is the element such that~$xyz=1$.
\end{enumerate}

In this situation we define a subgroup~$A(\Gamma )\subset Aut(\Gamma )$ as follows : an element~$\phi \in Aut(\Gamma )$ belongs to~$A(\Gamma )$, by definition, when
\begin{enumerate}
\item $\phi(x)$ is a conjugate of~$x^k$ for some integer~$k$,
\item $\phi$ commutes with~$\theta $ and~$\delta $ in~$Out(\Gamma )$.
\end{enumerate}

(It follows that~$\phi(y)$ is a conjugate of~$y^k$, and likewise for~$z$.) The image of~$A(\Gamma )$ in~$Out(\Gamma )$ will be denoted by~$\A(\Gamma )$.

For any finite group~$G$ at all, we shall see that there is a way to construct a group~$\gb$ satisfying (i) and (ii), so that it can play the role of~$\Gamma $ (and moreover~$\bar \gb = \gb$). Thus it makes sense to define~$\gt(G) := \A(\gb)$. We call it the {\em Grothendieck-Teichmüller group} of~$G$, and the present paper is dedicated to the study of its properties. We start with a few words of motivation and background.

How~$\gt(G)$ varies with~$G$ is a discussion which we postpone; for the time being, we take it for granted that it is possible to form the inverse limit
\[ \gt := \lim_G \gt(G) \, .   \]
In~\cite{pedro} we proved the central (for us) result that there is a monomorphism
\[ \gal \longrightarrow \gt \, .   \]
Thus~$\gt$, with its very brief definition,  gives a group-theoretic angle to the study of the absolute Galois group~$\gal$ of the field~$\q$. A very first step towards understanding~$\gt$ is to provide information on~$\gt(G)$ for some individual choices of~$G$, and this is what we propose to do here.

As an aside, the reader will probably find it useful to know that 
\[ \lim_G Out(\gb) \cong Out( \hat F_2) \, ,   \]
where~$\hat F_2$ is the profinite completion of the free group~$F_2$ on two generators. Thus~$\gt$ can be seen as a certain subgroup of~$Out(\hat F_2)$, and one can show that it can be lifted to a subgroup of~$Aut( \hat F_2)$. Also, let us indicate that~$\gt$ coincides with the group denoted~$\rGT _0$ by Drinfeld in~\cite{drinfeld} (we shall have nothing to say about the subgroup~$\rGT \subset \rGT_0$, also considered by Drinfeld). All this, and more, is proved in~\cite{pedro}. 

A good deal of the present paper will in fact pertain to~$\gtz(G)$, which is the subgroup of~$\gt(G)$ obtained by restricting condition (1) above to~$k=1$ only. One can show that there is a monomorphism 
\[ \gal ' \longrightarrow \gtz := \lim_G \gtz(G) \, ,   \]
where~$\gal'$ is the derived subgroup of~$\gal$. So~$\gtz$ can potentially give us information on~$\gal'$ just like~$\gt$ can give us information on~$\gal$, and of course the abelianization~$\gal / \gal' \cong \hat{\z}^\times$ is well-understood. Here~$\hat{\z}^\times$ is the group of units in the profinite completion of~$\z$. 

The following simple example should illuminate the situation. If~$G = C_n$, the cyclic group of order~$n$, we have $\bar{C_n} \cong C_n \times C_n$ with its canonical pair of generators. Then~$\gt(C_n) \cong \left( \z/n \right)^\times$, directly from the definition, while~$\gtz(C_n)$ is trivial. Letting~$n$ vary, we can take the inverse limit and obtain 
\[ \gal \longrightarrow \lim_n \gt(C_n) \cong \hat{\z}^\times \, .  \]
In turn, this homomorphism can be identified with the celebrated {\em cyclotomic character}, whose kernel is~$\gal'$. In a sense, consideration of cyclic groups accounts for what is abelian in~$\gal$, and we must turn to non-abelian groups and their~$\gt$ to proceed further.

\[ \star \star \star  \]

The Grothendieck-Teichmüller group is strongly related to the theory of {\em dessins d'enfants}, which are the object of many papers in these Proceedings (some information on dessins is given below in this Introduction, and more is said in~\S\ref{sec-dessins}). On the one hand one uses dessins in order to construct the homomorphism from~$\gal$ into~$\gt$ and show that it is injective. On the other hand, the group~$\gt$ can be used to shed light on the action of~$\gal$ on (isomorphism classes) of dessins. Indeed, when trying to predict whether two dessins belong to the same Galois orbit, one starts by checking a few combinatorial properties which they must have in common: the same number of black vertices, the same number of white vertices, the same number of faces, and the same ``monodromy group'', for example.

All these are subsumed by the following statement: the action of~$\gal$ on those dessins with monodromy group~$G$ factors via 
\[ \gal \longrightarrow \gt \longrightarrow \gt(G) \, .   \]
For simplicity, say that one is interested in {\em regular} dessins, those with ``maximal symmetry''. Then the regular dessins with monodromy group (or automorphism group) $G$ are in bijection with those normal subgroups~$N$ of~$\gb$ such that~$\gb/N \cong G$, and the action of~$\gal$ factors through the natural action of~$\gt(G)$ on these. The combinatorial features above can be recovered from this, and more. This motivates the computation of~$\gt(G)$ for a single group~$G$ individually.

An example of a finer statement which one can make about the Galois action is the following: if~$\gtz(G) = 1$, then~$\gal'$ acts trivially on the set of dessins with monodromy~$G$. In different terms, the {\em moduli field} of such a dessin, that is, the number field~$F$ whose fixed subgroup in~$\gal$ is the stabilizer of the dessin, is an {\em abelian} extension of~$\q$. (The field~$F$ is strongly related to, though sometimes smaller than, the number fields over which the dessin can be defined.) 

During the SIGMAP conference, Gareth Jones asked for examples of regular dessins with non-abelian moduli field. A hint for those trying to answer the question is thus that the monodromy group~$G$ must satisfy~$\gtz(G) \ne 1$. In the course of this paper we shall see that this rules out~$G= A_5$ and~$G = D_n$ when~$n$ is divisible by~$4$, among others.

\[ \star \star \star  \]

Let us now describe the contents of the paper. It is in Section~\ref{sec-generalities} where, after expanding on the definitions above, we prove that properties of~$G$ are reflected in properties of~$\gtz$ (but not~$\gt$). For example we establish:

\begin{thm}
If~$G$ is a~$p$-group for some prime~$p$, then so is~$\gtz(G)$; if~$G$ is nilpotent, then so is~$\gtz(G)$. 

The group~$\gt(G) / \gtz(G)$ is abelian, with exponent dividing that of~$\left(\z/N\right)^\times$, where~$N$ is the order of~$x$ or~$y$ in~$\gb$ (in particular, this exponent may not be a power of~$p$ when~$G$ is a~$p$-group).
\end{thm}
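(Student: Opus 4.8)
The plan is to exhibit a homomorphism from~$\gt(G)$ onto a subgroup of an abelian group, with kernel exactly~$\gtz(G)$; the result will then follow from the first isomorphism theorem together with the elementary fact that a subquotient of an abelian group is abelian with exponent dividing that of the group. The target will be a quotient of~$(\z/N)^\times$. Note first that~$N$ is unambiguous: the automorphism~$\theta$ of condition~(i) interchanges~$x$ and~$y$, so these two elements have the same order.

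To build the homomorphism I would first record that for~$\phi \in A(\gb)$ the integer~$k$ of condition~(1) is a unit modulo~$N$: since~$\phi$ is an automorphism, $\phi(x)$ has order~$N$, and as it is conjugate to~$x^k$ the element~$x^k$ also has order~$N$, forcing~$\gcd(k,N)=1$. The delicate point, which I expect to be the main obstacle, is that~$k$ is in general only well defined modulo the subgroup $S := \{\, k \in (\z/N)^\times : x^k \sim x \,\}$, where~$\sim$ denotes conjugacy in~$\gb$. I would check that~$S$ is a subgroup: if~$g x g^{-1} = x^{k_2}$ then~$g x^{k_1} g^{-1} = x^{k_1 k_2}$, from which closure under products and inverses follows. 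Then, for a fixed~$\phi$, the set~$K(\phi) = \{\, k : \phi(x) \sim x^k \,\}$ is a single coset~$k_0 S$: if~$x^{k_0} \sim x^{k_1}$ via some~$g$, conjugating by~$g$ gives~$x \sim x^{k_1 k_0^{-1}}$, so~$k_1 k_0^{-1} \in S$, and conversely the same computation shows every element of~$k_0 S$ lies in~$K(\phi)$. This produces a well-defined map~$\chi \colon \gt(G) \to (\z/N)^\times / S$ with~$\chi(\phi) = k_0 S$; it is insensitive to the choice of outer representative, because inner automorphisms send~$x$ to a conjugate of~$x = x^1$ and hence have~$k_0 \in S$.

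Next I would verify that~$\chi$ is a homomorphism. If~$\phi_1(x) \sim x^{k_1}$ and~$\phi_2(x) \sim x^{k_2}$, then~$\phi_1\phi_2(x) = \phi_1(\phi_2(x))$ is conjugate to~$\phi_1(x^{k_2}) = \phi_1(x)^{k_2}$, which is in turn conjugate to~$(x^{k_1})^{k_2} = x^{k_1 k_2}$; hence~$\chi(\phi_1\phi_2) = \chi(\phi_1)\chi(\phi_2)$. (This also re-proves that~$A(\gb)$ is closed under composition.) Finally I would identify the kernel: $\phi \in \ker\chi$ means~$k_0 \in S$, that is~$\phi(x) \sim x^{k_0} \sim x$, which is precisely condition~(1) with~$k=1$, so~$\ker\chi = \gtz(G)$.

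Putting these together, the first isomorphism theorem yields~$\gt(G)/\gtz(G) \cong \operatorname{im}\chi$, a subgroup of the abelian group~$(\z/N)^\times / S$. Thus~$\gt(G)/\gtz(G)$ is abelian, and its exponent divides the exponent of~$(\z/N)^\times / S$, which divides that of~$(\z/N)^\times$. The parenthetical remark is then immediate: $(\z/N)^\times$ typically has order divisible by primes other than~$p$ even when~$G$, and hence~$N$, is a power of~$p$, so this exponent need not be a power of~$p$.
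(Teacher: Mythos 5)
Your argument establishes only the second paragraph of the theorem. For that part it is correct, and it packages the paper's computation a little more cleanly: the paper proves directly that every commutator $[\phi,\psi]$ sends~$x$ to a conjugate of~$x$ (using the multiplicativity $\psi\circ\phi(x)\sim x^{k\ell}$) and gets the exponent bound from $\phi^n(x)\sim x^{k^n}$, whereas you organize the same multiplicativity into an actual homomorphism $\chi\colon\gt(G)\to(\z/N)^\times/S$ with kernel~$\gtz(G)$, which disposes in one stroke of the non-uniqueness of~$k$ that the paper only flags as ``a complication to keep in mind''. The verifications that~$S$ is a subgroup, that $K(\phi)$ is a single coset, and that inner automorphisms land in~$S$ are all sound.

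The genuine gap is that the entire first paragraph of the statement is unaddressed: you prove nothing about $\gtz(G)$ being a $p$-group when~$G$ is, nor about nilpotency, and neither claim can follow from the existence of~$\chi$, which controls only the quotient $\gt(G)/\gtz(G)$ and says nothing about the subgroup $\gtz(G)$ itself. The paper's argument for the $p$-group case is of a quite different nature: $\gb$ is a $p$-group, being a subgroup of~$G^r$; if the preimage of~$\gtz(G)$ in $Aut(\gb)$ contained an element~$\phi$ of prime order $\ell\ne p$, then, since $\phi(x)$ is conjugate to~$x$ and $\phi(y)$ to~$y$, the induced action of~$\phi$ on the Frattini quotient of~$\gb$ (an elementary abelian $p$-group generated by the images of~$x$ and~$y$) is trivial, and the Burnside basis theorem then forces~$\phi$ to act trivially on~$\gb$, a contradiction. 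The nilpotent case rests on a further separate proposition, namely that for~$G$ and~$H$ of coprime orders one has $\bar{G\times H}\cong\gb\times\bar H$ and $\gtz(G\times H)\cong\gtz(G)\times\gtz(H)$, combined with the fact that a finite nilpotent group is a direct product of $p$-groups. You would need to supply arguments of this kind to complete the proof.
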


In Section~\ref{sec-simple} we define a new group~$\s(G)$. We hasten to add that when~$G$ is non-abelian and simple we shall prove that there is an isomorphism~$\gtz(G) \cong \s(G)$, so the material in that section can be seen at least as a study of the ``simple case''. However~$\s(G)$ is defined for all~$G$, and it is a much easier group to deal with than~$\gtz(G)$. It is described as the intersection, in a permutation group, of a Young subgroup and the centralizer of a few explicit permutations. The first virtue of~$\s(G)$ is that it is easy to reason with, leading for example to the next result:

\begin{thm}
Let~$G$ be a finite, simple, non-abelian group, and let~$m$ be the size of the largest conjugacy class in~$G$. A simple factor occuring in~$\gtz(G)$ must be isomorphic to either: \begin{itemize}
\item $C_2$,
\item $C_3$,
\item a subquotient of~$Out(G)$,
\item an alternating group~$A_s$ where~$s \le \frac{ m^2} {|G|}$.
\end{itemize}

\end{thm}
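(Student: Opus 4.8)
The plan is to work entirely with the group~$\s(G)$, using the isomorphism~$\gtz(G) \cong \s(G)$ which holds because~$G$ is simple and non-abelian; it then suffices to locate the simple composition factors of~$\s(G)$. I recall that~$\s(G)$ is realised as an intersection~$\s(G) = Y \cap Z$ inside a symmetric group~$\operatorname{Sym}(X)$, where~$X$ is the relevant set of (conjugacy classes of) generating pairs for~$G$, where~$Y = \prod_t \operatorname{Sym}(X_t)$ is the Young subgroup attached to the partition of~$X$ into ``types''~$X_t$, and where~$Z$ is the centraliser of the few permutations of~$X$ induced by the operations~$\theta$ and~$\delta$ (equivalently, by the order~$\le 3$ symmetries of a triple~$(a,b,c)$ with~$abc=1$).

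First I would pin down the size of a single block~$X_t$. A type~$t$ records the~$G$-conjugacy classes~$C, C'$ of the two generators, so the pairs of type~$t$ number at most~$|C|\cdot|C'| \le m^2$. The decisive point is that the simultaneous conjugation action of~$G$ on \emph{generating} pairs is free: if~$g$ fixes a pair~$(a,b)$ with~$\langle a,b\rangle = G$, then~$g$ centralises~$G$, whence~$g \in Z(G) = 1$ since~$G$ is non-abelian simple. Hence every~$G$-orbit of generating pairs has exactly~$|G|$ elements, so each block satisfies~$|X_t| \le m^2/|G|$. This is where the bound~$s \le m^2/|G|$ on the alternating factors will ultimately originate.

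Next I would read off the composition factors from the structure~$\s(G) = Y \cap Z$. Since~$\s(G) \le Y = \prod_t \operatorname{Sym}(X_t)$, every element preserves the blocks, and I would analyse~$\s(G)$ one block at a time, together with the action that permutes the types. Within a block, commuting with the~$\theta, \delta$ symmetries confines~$\s(G)$ to the centraliser in~$\operatorname{Sym}(X_t)$ of a permutation built from operations of order~$1$,~$2$ and~$3$; such a centraliser is a direct product of wreath products~$C_\ell \wr \operatorname{Sym}(c_\ell)$ with~$\ell \in \{1,2,3\}$. Its only abelian simple factors are therefore~$C_2$ and~$C_3$, while its non-abelian simple factors are alternating groups~$A_{c_\ell}$ with~$c_\ell$ at most the number of points in the block, which is~$\le m^2/|G|$; this yields~$A_s$ with~$s \le m^2/|G|$. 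Finally, the way~$\s(G)$ is permitted to carry one type to another (i.e.\ to permute conjugacy classes of~$G$) is governed by the outer automorphisms of~$G$, and isolating this as the top section of a suitable normal series accounts for the factors that are subquotients of~$Out(G)$.

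The main obstacle, and the step that uses the centraliser condition in an essential way, is ruling out exotic simple factors: mere containment in the Young subgroup is not enough, since an arbitrary subgroup of~$\operatorname{Sym}(n)$ can have non-alternating simple factors such as~$PSL_2(7)$. The argument must therefore genuinely exploit that~$\s(G)$ centralises the~$\theta,\delta$ operations (and commutes with the free conjugation action), which is exactly what pins it inside the wreath-product centralisers above. Care is likewise needed in passing from those centralisers to the subgroup~$\s(G)$ of their product: I would build an explicit subnormal series of~$\s(G)$ whose sections are the block-wise images and the type-permutation part, check that each section has composition factors only among~$C_2$,~$C_3$, the~$A_s$ with~$s \le m^2/|G|$, and subquotients of~$Out(G)$, and then conclude by Jordan--Hölder that the same list covers all composition factors of~$\s(G) \cong \gtz(G)$.
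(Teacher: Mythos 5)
Your overall strategy --- replace~$\gtz(G)$ by~$\s(G)$, decompose along the partition of~$\pc$ into $G\times G$-``orbits'', and extract the bound~$s\le m^2/|G|$ from the freeness of the conjugation action on generating pairs --- is the paper's strategy, and your computation of the block size (triviality of the centre gives fibres of size~$|G|$ for~$\pairs\to\pc$, hence blocks of size at most~$m^2/|G|$) is exactly the paper's. But the structural step is wrong as stated: you claim that, \emph{within a single block}~$X_t$, commuting with~$\theta$ and~$\delta$ confines~$\s(G)$ to the centralizer in the symmetric group of~$X_t$ of a permutation of order at most~$3$. The permutations~$\theta$ and~$\delta$ do \emph{not} preserve the blocks: $\theta\cdot[g,h]_c=[h,g]_c$ and~$\delta\cdot[g,h]_c=[h^{-1}g^{-1},h]_c$ change the pair of conjugacy classes, hence carry the block labelled by~$(C,C')$ to the one labelled by~$(C',C)$, and so on. (This is visible in the paper's $PSL_3(\f_2)$ example: $\theta$ contains the transposition~$(3,19)$, and~$3$,~$19$ lie in different blocks of the displayed partition.) So there is no induced permutation of~$X_t$ to centralize, and the block-by-block reduction collapses. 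Relatedly, your placement of~$Out(G)$ is off: by definition~$\s(G)$ \emph{centralizes} the $Out(G)$-action on~$\pc$ and already preserves every block, so the $Out(G)$-subquotients cannot arise from ``carrying one type to another''.

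The paper's decomposition is transverse to yours: it considers the orbits of~$H=Out(G)\times\langle\theta,\delta\rangle$ on~$\pc$ (each of which meets many blocks), declares two $H$-orbits equivalent when there is an $H$-equivariant bijection respecting the intersections with each block, and shows that~$\s(G)$ is \emph{exactly} a product, over equivalence classes, of wreath products~$E\wr S_s$, where~$E=N_H(K)/K$ is a subquotient of~$H$ (whence~$C_2$,~$C_3$ and subquotients of~$Out(G)$) and~$s$ is the number of equivalent orbits. The bound~$s\le m^2/|G|$ then comes not from the degree of a symmetric group acting inside one block, but from the fact that an $H$-equivariant equivalence of orbits is determined by the image of a single point, which is forced to remain in that point's $G\times G$-``orbit''. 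Note also that identifying~$\s(G)$ exactly, rather than merely bounding it above by a wreath product, is essential --- as you yourself observe with the~$PSL_2(7)\le S_8$ example --- and your proposal never actually produces the subnormal series it promises; with the blocks chosen as you chose them, no such series with the advertised sections exists.
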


(We stress that the theorem mentions~$Out(G)$, not~$Out(\gb)$ which is much bigger and would make for a tautological statement.)

It is also easy to compute explicitly with~$\s(G)$. The reader should keep in mind that a computer, unleashed after~$\gtz(G)$ by a direct, brute force approach, will not be able to finish its task within a day or without exceeding the memory on a group~$G$ whose order is much bigger than~$32$. Relying only on naive calculations, the author has yet to see a completed example for which the order of~$\gtz(G)$ is anything but~$1, 2, 3, 4, 5, 6, 7$. By contrast, the machinery of~$\s(G)$ has allowed us to treat, for example, the case of the Mathieu group~$M_{11}$ of order~$7920$, yielding:

\begin{thm}
The direct product of the simple factors of~$\gtz(M_{11})$ is 
\begin{multline*}
 C_2^{465} \times C_3 ^{46} \times A_5^{10} \times A_6^9 \times A_7^{10} \times A_8^4 \times A_9^4 \times A_{10}^5 \times A_{11}^5 \times A_{12} \times A_{14}^2 \times A_{15}^4 \times A_{16} \times A_{17}^3 \times A_{18}^{12}\\ \times A_{19} \times A_{20}^2 \times A_{23} \times A_{28} \times A_{31} \times A_{33}^2 \, . 
\end{multline*}
Accordingly, the order of~$\gtz(M_{11})$ is $2^{1141} \cdot 3^{407} \cdot 5^{165} \cdot 7^{98} \cdot 11^{43} \cdot 13^{34} \cdot 17^{23} \cdot 19^{8} \cdot 23^{5} \cdot 29^{3} \cdot 31^{3}$.
\end{thm}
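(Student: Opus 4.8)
The proof is computational, and the first move is a reduction. Since $M_{11}$ is simple and non-abelian, of order $7920 = 2^4 \cdot 3^2 \cdot 5 \cdot 11$, the isomorphism $\gtz(M_{11}) \cong \s(M_{11})$ established in Section~\ref{sec-simple} applies, and I would work entirely with $\s(M_{11})$. The plan is then to realize $\s(M_{11})$ concretely as the intersection, inside an ambient symmetric group, of a Young subgroup $Y$ with the centralizer of the few explicit permutations appearing in its definition, and to extract the multiset of composition factors by machine. Concretely I would: enumerate the finite set $\Omega$ on which the symmetric group acts; identify the partition of $\Omega$ (by conjugacy-class data) that cuts out the Young subgroup $Y = \prod_i S_{n_i}$; and write down the explicit permutations to be centralized. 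Since $Out(M_{11}) = 1$, the ``subquotient of $Out(G)$'' alternative in the preceding theorem is vacuous here, so one expects only $C_2$, $C_3$ and alternating factors $A_s$ to survive, in agreement with the list to be proved.

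The structural point that makes the computation feasible — as opposed to the hopeless brute-force attack on $\gtz(M_{11})$ for a group of this size — is that $\Omega$ breaks up into independent ``cells'', and both $Y$ and the centralizing permutations respect this decomposition. Hence $\s(M_{11})$ is a direct product over the cells, and on each cell one is left with a small intersection of a product of symmetric groups with a centralizer. The non-abelian composition factors of each local piece are alternating groups $A_s$, where $s$ records the relevant number of equal-length cycles, while the $C_2$ and $C_3$ factors arise from the small-order permutations being centralized (an involution and an order-three symmetry of the three marked points, together with the signs of the symmetric factors). I would therefore treat the cells one at a time in GAP, applying \gap{CompositionSeries} (or simply reading off the product-of-symmetric-groups structure directly on each small cell), and then amalgamate the composition factors found across all cells into a single multiset.

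Having assembled the multiset of composition factors, forming their direct product gives the first displayed formula, and taking the product of their orders gives $|\gtz(M_{11})|$; the two statements are automatically consistent, since the claimed order is nothing but the order of the displayed direct product. As a correctness check I would verify the prime-by-prime bookkeeping: the exponent of a prime $p$ in the order equals the sum, over the alternating factors $A_s$ occurring, of $v_p(s!)$, plus the contributions of the abelian factors for $p=2,3$. For instance the prime $31$ enters only through $A_{31}$ and $A_{33}^2$, each factorial contributing a single $31$ because the next prime $37$ exceeds $33$, which yields exactly the exponent $31^3$; the analogous tallies for $29$, $23$ and $19$ likewise reproduce the stated exponents.

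The main obstacle is not conceptual but one of scale and careful bookkeeping. The set $\Omega$ is large, its size growing with the number of (suitably normalized) generating triples of $M_{11}$ sorted by triples of conjugacy classes, so one must take care to build $\Omega$ and its cell decomposition correctly and to produce composition factors as large as $A_{33}$ without the intermediate permutation groups overflowing memory. The cell decomposition is precisely what tames this difficulty: it replaces one intractable computation in a symmetric group of enormous degree by many independent computations, each on a cell small enough that GAP returns its composition factors immediately, after which the global answer is obtained simply by collecting the results.
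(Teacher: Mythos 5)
Your proposal matches the paper's method: reduce to $\s(M_{11})$ via the isomorphism of Theorem~\ref{thm-main-simple}, realize it as the intersection of the Young subgroup with the centralizer of $\theta$ and $\delta$ in $S(\pc)$ (with $Out(M_{11})$ trivial), and exploit the decomposition of $\pc$ into $H$-invariant packets (your ``cells'') so that the composition factors can be collected packet by packet in GAP — exactly the workflow of Section~\ref{sec-gap}. This is the same approach as the paper, which likewise reports only the multiset of simple factors for $M_{11}$ because the full group structure was out of reach.
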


We also give a complete description of~$\gtz(PSL_2(\f_q))$ for $q \in \{ 4,$ $ 7,$ $ 8,$ $ 9,$ $ 11,$ $ 13,$ $ 16,$ $ 17,$ $ 19 \}$, and we treat~$A_7$ and~$PSL_3(\f_3)$. In Section~\ref{sec-gap} we explain some of the practicalities of the implementation with the open-source computer algebra system GAP.

To move on with our outline, let~$\pairs$ be the set of pairs~$(g, h) \in G$ such that~$\langle g, h \rangle = G$. We will see that there is a very natural action of~$\gt(G)$ on the set~$\pairs/ Aut(G)$. However, the development of the isomorphism between~$\gtz(G)$ and~$\s(G)$ relies on the existence of an action of~$\gt(G)$ on~$\pc$, the set of orbits in~$\pairs$ under the action of the inner automorphisms only (the letter~$c$ is for ``conjugation''). At first sight this appears rather mysterious, and the arguments are {\em ad hoc}. In section~\ref{sec-dessins} we give a conceptual explanation.  

The key is to bring dessins d'enfants into the picture. Here we must recall that a dessin is essentially a bipartite graph drawn on a compact, oriented surface in such a way that the complement of the graph is a union of topological discs. The (isomorphism classes of) dessins d'enfants are in bijection with many other sets of (isomorphism classes of) objects, notably algebraic curves over~$\qb$ with a certain ramification property, or étale algebras over~$\qb(x)$, again with a ramification property. The group~$\gal$ acts naturally on étale algebras, and this is turned into an action on dessins {\em via} the said bijection.

In~\cite{pedro} (which is our reference for dessins), we prove that dessins form a category~$\cat$, and that the aforementioned bijections can be refined into equivalences of categories. Such a refinement may not seem to bring much new information at first sight, but it is not so. Indeed, with this formalism it is completely straightforward to define the category~$G \cat$ of {\em $G$-dessins}, that is, dessins equipped with an action of a fixed group~$G $; and we prove the following:

\begin{thm}
The group~$\gal$ acts on the set of isomorphism classes of objects in~$G \cat$, for any group~$G$.

Moreover, suppose we consider the regular~$G$-dessins~$X$ in~$G \cat$ such that the action gives an isomorphism~$G \longrightarrow Aut(X)$. Then the set of isomorphism classes of such objects is naturally in bijection with~$\pc$, and the latter is endowed with an action of~$\gal$.
\end{thm}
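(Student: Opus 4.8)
The plan is to build everything on top of the action of $\gal$ on the category~$\cat$ established in~\cite{pedro}, so that the first assertion becomes a formal consequence of functoriality. First I would recall that each~$\sigma \in \gal$ acts on~$\cat$ by an auto-equivalence~$\sigma_*$ — coming, through the equivalences of~\cite{pedro}, from the Galois action on étale algebras over~$\qb(x)$ — and that these auto-equivalences assemble into an action of~$\gal$ on isomorphism classes. By definition an object of~$G\cat$ is a dessin~$D$ together with a homomorphism~$\rho \colon G \to Aut(D)$; equivalently, $G\cat$ is the functor category~$\operatorname{Fun}(BG, \cat)$ from the one-object groupoid~$BG$ into~$\cat$. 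Since~$\sigma_*$ is a functor it carries~$Aut(D)$ to~$Aut(\sigma_* D)$, so setting~$\sigma \cdot (D, \rho) = (\sigma_* D, \sigma_* \circ \rho)$ yields an auto-equivalence of~$G\cat$. Functoriality and the compatibilities of~\cite{pedro} then guarantee that these descend to a genuine action of~$\gal$ on isomorphism classes of~$G$-dessins, which is the first assertion.

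For the second assertion I would produce an explicit bijection. Given a pair~$(g,h) \in \pairs$, I would form the regular dessin~$X_{(g,h)}$ whose underlying set is~$G$ itself, with the two monodromy permutations given by right translation by~$g$ and by~$h$. Its automorphism group is then the group of left translations, canonically isomorphic to~$G$, and I take this canonical isomorphism~$G \to Aut(X_{(g,h)})$ as the~$G$-action~$\rho$; this is manifestly a regular~$G$-dessin for which~$\rho$ is an isomorphism onto~$Aut(X_{(g,h)})$. Conversely, using the description (recalled in the Introduction, via normal subgroups~$N \trianglelefteq \gb$ with~$\gb/N \cong G$) of regular dessins with automorphism group~$G$ as the standard left/right translation models on a generating pair, I would check that every such object~$(X, \rho)$ is isomorphic, after transporting its structure along~$\rho^{-1}$, to some~$X_{(g,h)}$.

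The crux is to determine when~$X_{(g,h)}$ and~$X_{(g',h')}$ are isomorphic \emph{as~$G$-dessins}. Such an isomorphism is a bijection~$f$ of the underlying set~$G$ intertwining the two monodromies and, crucially, commuting with the whole~$G$-action~$\rho$ — that is, commuting with every left translation. The bijections of~$G$ commuting with all left translations are exactly the right translations~$f\colon u \mapsto uc$; imposing intertwining of the monodromies then forces~$g' = c^{-1} g c$ and~$h' = c^{-1} h c$. Hence two pairs give isomorphic~$G$-dessins precisely when they are conjugate, so the isomorphism classes are in bijection with~$\pairs / Inn(G) = \pc$. I would conclude by transporting the action of the first part across this bijection: because~$\sigma_*$ is an equivalence it preserves regularity and carries~$Aut(X)$ to an isomorphic group, hence preserves the condition that~$\rho$ be an isomorphism onto~$Aut(X)$, so the action on~$G\cat$ restricts to these objects and descends to an action on~$\pc$.

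The main obstacle I anticipate is not the formal first part but making transparent the appearance of~$Inn(G)$ rather than~$Aut(G)$ in the bijection. It is exactly the rigidification by the chosen action~$\rho$ — forcing~$f$ to commute with all of~$Aut(X)$, and thereby to be a right translation — that cuts the symmetry group down from~$Aut(G)$, which governs bare regular dessins, to~$Inn(G)$. The care lies in getting this bookkeeping right and in verifying that the transported Galois action on~$\pc$ is genuinely the \emph{ad hoc} action used earlier in the paper, thereby delivering the promised conceptual explanation.
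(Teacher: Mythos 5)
Your proposal is correct and follows essentially the same route as the paper: the first assertion is obtained by the same formal device of lifting a self-equivalence $F$ of the category to $\tilde F(X,\rho)=(F(X),a_X\circ\rho)$ on $G$-dessins, and the second by identifying regular dessins with left/right translation models and observing that rigidifying by $\rho$ forces isomorphisms to be right translations, cutting $Aut(G)$ down to $Inn(G)$. The only cosmetic difference is that you normalize $\rho$ to the canonical isomorphism at the outset, whereas the paper first proves a proposition classifying regular objects of $\Gamma\sets$ by triples $(g,h,\phi)$ for arbitrary $\phi$ and then specializes via $\alpha=\phi^{-1}$ — the underlying computation is the same.
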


(The word {\em regular} will be explained in the text.) It is now much more believable that~$\gt$ should act on~$\pc$; given that the action of~$\gt$ on dessins, when restricted to those dessins~$X$ such that~$Aut(X) \cong G$, factors {\em via}~$\gt(G)$, we should not be overly surprised by the discovery made in Section~\ref{sec-simple} that~$\gt(G)$ does act on~$\pc$.

\section{Generalities} \label{sec-generalities}

We start by expanding on the definitions given in the Introduction. We define~$\gb$, the group~$\gt(G)$ as a subgroup of~$Out(\gb)$, explain the relationship with~$\gal$, and prove the most basic properties.

\subsection{The group~$\gb$} \label{subsec-generalities-bato}

Let~$G$ be a finite group. Whenever~$N$ is a subgroup of a group~$\Gamma $, it will be convenient to say that~$N$ {\em has index~$G$ in~$\Gamma $} when (i) $N$ is normal in~$\Gamma $ and (ii) there is an isomorphism~$\Gamma / N \cong G$. 

Writing~$F_2 = \langle x, y \rangle$ for the free group on two generators~$x$ and~$y$, we call~$N_G$ the intersection of all the subgroups of~$F_2$ having index~$G$. There are finitely many of these, so the group~$\gb := F_2 / N_G$ is finite. We usually write~$x$ and~$y$ for the images of the generators of~$F_2$ in~$\gb$, since no confusion should arise. 

The following lemma is almost trivial.

\begin{lem} \label{lem-basic-props-gb}
$\gb$ has the following properties: \begin{enumerate}
\item The intersection of all the subgroups of~$\gb$ having index~$G$ is trivial.
\item If~$\Gamma $ is any group such that the intersection of all its subgroups of index~$G$ is trivial, and if~$x'$ and~$y'$ are generators of~$\Gamma $, then there is a homomorphism~$\gb \to \Gamma $ mapping~$x$ to~$x'$ and~$y$ to~$y'$.
\item If~$x'$ and~$y'$ are generators for~$\gb$, then there is an automorphism of~$\gb$ mapping~$x$ to~$x'$ and~$y$ to~$y'$.
\end{enumerate}
\end{lem}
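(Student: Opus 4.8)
The plan is to deduce all three parts from the lattice isomorphism (correspondence) theorem and the universal property of the free group, the only genuinely non-formal ingredient being the finiteness of~$\gb$. Throughout I write~$N_i$ for the (finitely many) subgroups of~$F_2$ of index~$G$, so that~$N_G = \bigcap_i N_i$ by definition.

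For~(1), I would first identify the subgroups of~$\gb = F_2/N_G$ of index~$G$ with the images~$N_i/N_G$. Indeed, a normal subgroup~$M \trianglelefteq \gb$ with~$\gb/M \cong G$ pulls back, under the quotient~$F_2 \twoheadrightarrow \gb$, to a normal~$\tilde M \trianglelefteq F_2$ containing~$N_G$ with~$F_2/\tilde M \cong \gb/M \cong G$; thus~$\tilde M$ is one of the~$N_i$. Conversely every~$N_i$ contains~$N_G$ by the very definition of the latter, so each~$N_i/N_G$ is a subgroup of~$\gb$ of index~$G$. Intersecting, $\bigcap_i (N_i/N_G) = (\bigcap_i N_i)/N_G = N_G/N_G$ is trivial, which is~(1).

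For~(2), the universal property of~$F_2$ produces a homomorphism~$\psi \colon F_2 \to \Gamma$ sending~$x \mapsto x'$ and~$y \mapsto y'$; it is onto because~$x', y'$ generate~$\Gamma$. Writing~$K = \ker \psi$, it suffices to show~$N_G \subseteq K$, for then~$\psi$ descends to~$\gb$. The key point is a pullback-of-intersections argument along~$F_2 \twoheadrightarrow F_2/K \cong \Gamma$: taking preimages sends the index-$G$ subgroups of~$\Gamma$ bijectively onto those~$N_i$ that contain~$K$, and it carries intersections to intersections. Since by hypothesis the index-$G$ subgroups of~$\Gamma$ meet in the trivial subgroup, whose preimage is exactly~$K$, we get~$\bigcap_{N_i \supseteq K} N_i = K$. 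As a larger index set only shrinks the intersection, $N_G = \bigcap_i N_i \subseteq \bigcap_{N_i \supseteq K} N_i = K$, as required.

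Finally~(3) is immediate: by~(1) the group~$\gb$ itself satisfies the hypothesis of~(2), so applying~(2) with~$\Gamma = \gb$ yields an endomorphism of~$\gb$ sending~$x \mapsto x'$ and~$y \mapsto y'$; it is surjective since~$x', y'$ generate, and a surjective endomorphism of a finite group is an automorphism. I expect the only real subtlety to be the pullback-of-intersections step in~(2); everything else is the correspondence theorem together with finiteness, which is presumably why the authors describe the lemma as almost trivial.
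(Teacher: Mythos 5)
Your proof is correct, and since the paper dismisses this lemma as ``almost trivial'' and gives no proof, your argument via the correspondence theorem (with the pullback-of-intersections step for (2) and finiteness of~$\gb$ for (3)) is exactly the intended one. Nothing to add.
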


We turn to the description of a concrete ``model'' for~$\gb$. The key observation is that subgroups of~$F_2$ of index~$G$ are in bijection with the orbits of~$Aut(G)$ on the set~$\pairs$ of pairs of generators for~$G$; the bijection sends a pair~$(x', y')$ to the kernel of the map~$F_2 \to G$ sending~$x$ to~$x'$ and~$y$ to~$y'$. 

Based on this, we select pairs~$(x_1, y_1), \ldots, (x_r,y_r)$ forming a system of representatives for the orbits of~$Aut(G)$, that is, with just one pair out of each orbit. (The number~$r = r(G)$ was much studied in~\cite{gareth}.) Consider then the subgroup~$\tilde G$ of~$G^r$ generated by~$x= (x_1, x_2, \ldots , x_r)$ and~$y = (y_1, y_2, \ldots, y_r)$. Then it is straightforward to show that~$\tilde G$ satisfies (2) of lemma~\ref{lem-basic-props-gb} (since the group~$\Gamma $ mentioned there embeds into~$G^r$). This property clearly characterizes~$\gb$ as a group with distinguished generators, so there must be an isomorphism~$\gb \cong \tilde G$ identifying the two elements which we have both called~$x$, and likewise for~$y$. For most of this paper we will consider~$\gb$ to be the subgroup of~$G^r$ just defined.

Let~$p_i$ be the projection onto the~$i$-th factor of~$G^r$, restricted to~$\gb$. It sends~$x$ to~$x_i$ and~$y$ to~$y_i$, so it is surjective and its kernel~$K_i$ has index~$G$. The various~$K_i$'s are distinct (by choice of the pairs~$(x_i, y_i)$), so they must constitute the~$r$ different subgroups of index~$G$ in~$\gb$. In particular they form a characteristic family of subgroups, that is, for any~$\phi \in Aut(\gb)$ we must have~$\phi(K_i) = K_{\sigma (i)}$ for some permutation~$\sigma \in S_r$. 

Finally we note that~$\bar \gb = \gb$. Indeed, if we try to construct the model for~$\bar \gb$ as we have just done with~$\gb$, then property (3) of lemma~\ref{lem-basic-props-gb} leaves us only one pair to consider; in other words, $r(\gb) = 1$.

\subsection{The group~$\gt(G)$}

By (3) of lemma~\ref{lem-basic-props-gb}, the group~$\gb$ has an automorphism~$\theta $ with~$\theta (x) = y$ and~$\theta (y) = x$; likewise, $\gb$ possesses an automorphism~$\delta $ with~$\delta (x) = y^{-1} x^{-1}$ and~$\delta (y) = y$. 

Consider now the elements~$\phi \in Aut(\gb )$ satisfying
\begin{enumerate}
\item $\phi(x)$ is a conjugate of~$x^k$ for some~$k$ prime to the order of~$\gb $,
\item $\phi$ commutes with~$\theta $ and~$\delta $ in~$Out(\gb )$.
\end{enumerate}
(It follows that~$\phi(y)$ is a conjugate of~$y^k$, and likewise~$xy$ is a conjugate of~$(xy)^k$.) These form a subgroup of~$Aut(\gb)$, and its image in~$Out(\gb)$ will be called~$\gt(G)$. 

Likewise, we can consider those automorphisms satisfying (1) for~$k=1$ only, as well as (2); they induce a normal subgroup~$\gtz(G)$ of~$\gt(G)$.

A complication to keep in mind is that there is no well-defined map on~$\gt(G)$ that would associate to~$\phi$ the number~$k$ as above: the latter is not unique, and not even unique modulo the order of~$x$, for some powers of~$x$ may well be conjugated to one another. In other words an element of~$\gtz(G)$ may have the property that~$\phi(x)$ is a conjugate of~$x^k$ for many values of~$k \ne 1$.

It is however true that when~$\phi(x)\sim x^k$ and~$\psi(x) \sim x^\ell$, then~$\psi \circ \phi (x) \sim x^{k \ell}$, where we write~$a \sim b$ when~$a$ and~$b$ are conjugate. In particular since~$\phi^{-1}$ is a power of~$\phi$, we note that~$\phi^{-1}(x) \sim x^{k'}$ where~$x^{kk'} \sim x$. If~$\psi^{-1}(x) \sim x^{\ell'}$ with~$x^{\ell \ell'} \sim x$, then the commutator~$[\phi, \psi]$ takes~$x$ to a conjugate of 
\[ x^{kk'\ell \ell'} = (x^{kk'})^{\ell \ell'} \sim x^{\ell \ell'} \sim x \, .   \]

We have proved that all commutators in~$\gt(G)$ must belong to~$\gtz(G)$. Thus we may state:

\begin{lem} \label{lem-gtz-abelian}
The group~$\gt(G) / \gtz(G)$ is an abelian group, of exponent dividing that of $\left(\z / N \z\right)^\times$, where~$N$ is the order of~$x$ (or~$y$) in~$\gb$. 
\end{lem}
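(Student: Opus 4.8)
The plan is to establish the two claims of Lemma~\ref{lem-gtz-abelian} separately, both building directly on the multiplicativity computation carried out in the paragraph immediately preceding the statement. The abelianness of~$\gt(G)/\gtz(G)$ is in fact already settled there: it was shown that every commutator~$[\phi,\psi]$ in~$\gt(G)$ sends~$x$ to a conjugate of~$x$, hence satisfies condition~(1) with~$k=1$, and so lies in~$\gtz(G)$. Since the commutator subgroup of~$\gt(G)$ is contained in~$\gtz(G)$, the quotient~$\gt(G)/\gtz(G)$ is abelian. I would simply remark that this is the content of the preceding display.

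For the statement about the exponent, the key is to produce a well-defined homomorphism from the quotient into a cyclic-by-nature target. First I would note that although the integer~$k$ attached to an element~$\phi\in\gt(G)$ is not unique as an integer, the earlier observation that~$\phi(x)\sim x^k$ forces~$k$ to be determined modulo~$N$ \emph{up to the ambiguity of which powers of~$x$ are conjugate}. The natural move is to reduce~$k$ modulo~$N$ and land in~$\left(\z/N\z\right)^\times$; here~$k$ must be a unit because condition~(1) requires~$k$ prime to the order of~$\gb$, and~$N$ divides that order. I would then argue that the assignment~$\phi\mapsto k \bmod N$, while possibly multivalued, becomes single-valued on~$\gt(G)/\gtz(G)$: if~$\phi$ and~$\phi'$ differ by an element of~$\gtz(G)$ then their associated exponents differ multiplicatively by something sending~$x$ to a conjugate of~$x$, and the multiplicativity relation~$\psi\circ\phi(x)\sim x^{k\ell}$ shows the two candidate values of~$k \bmod N$ coincide. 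This produces a well-defined homomorphism
\[
\gt(G)/\gtz(G) \longrightarrow \left(\z/N\z\right)^\times ,
\]
which by the definition of~$\gtz(G)$ (the case~$k=1$) is injective.

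Granting injectivity of this map, the exponent bound is immediate: a subgroup of~$\left(\z/N\z\right)^\times$ has exponent dividing that of the ambient group, so~$\gt(G)/\gtz(G)$ has exponent dividing that of~$\left(\z/N\z\right)^\times$, as claimed. The symmetric remark that~$N$ may equally be taken as the order of~$y$ follows because~$\theta$ swaps~$x$ and~$y$ and lies in (or at least normalizes the data defining)~$\gt(G)$, so the orders of~$x$ and~$y$ in~$\gb$ coincide.

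The main obstacle I anticipate is exactly the well-definedness of the map~$\phi\mapsto k\bmod N$. The paper has gone out of its way to stress that~$k$ is genuinely non-unique, even modulo~$N$, because distinct powers of~$x$ can be conjugate in~$\gb$; so the delicate point is that this non-uniqueness must collapse precisely upon passing to the quotient~$\gt(G)/\gtz(G)$. I would handle this by phrasing the target not naively as ``the value of~$k$'' but as the coset of~$k$ under the subgroup of~$\left(\z/N\z\right)^\times$ consisting of those units~$u$ with~$x^u\sim x$; one checks this subgroup is exactly the image of~$\gtz(G)$, which makes the induced map on the quotient automatically both well-defined and injective. Verifying that~$x^u\sim x$ determines a genuine subgroup of units, and that it is the full image of~$\gtz(G)$, is the part requiring the most care, though it is a short consequence of the multiplicativity relation already in hand.
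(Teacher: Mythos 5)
Your treatment of abelianness is exactly the paper's: the displayed computation before the lemma already shows every commutator sends~$x$ to a conjugate of~$x$, hence lies in~$\gtz(G)$. For the exponent bound, however, you take a genuinely different and noticeably heavier route. The paper's argument is a one-liner: if~$\phi(x)\sim x^k$ and~$n$ is the exponent of~$\left(\z/N\z\right)^\times$, then~$\phi^n(x)\sim x^{k^n}=x$ because~$k^n\equiv 1\bmod N$ and~$N$ is the order of~$x$; hence~$\phi^n\in\gtz(G)$ and the exponent bound follows with no need for any well-defined invariant~$k$. You instead build an embedding of~$\gt(G)/\gtz(G)$ into a quotient of~$\left(\z/N\z\right)^\times$. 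That works, and it yields slightly more (an explicit abelian group receiving the quotient injectively, which incidentally re-proves abelianness), but it forces you to confront precisely the ambiguity the paper warns about and then sidesteps.

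On that point, be careful: your middle paragraph asserts that the assignment~$\phi\mapsto k\bmod N$ becomes single-valued on the quotient and that ``the two candidate values of~$k\bmod N$ coincide.'' As stated this is false --- distinct powers of~$x$ may be conjugate in~$\gb$, so even a fixed~$\phi$ admits several valid~$k$'s modulo~$N$, and the map into~$\left(\z/N\z\right)^\times$ itself is not well defined. Your final paragraph supplies the correct repair: the target must be~$\left(\z/N\z\right)^\times/U$ where~$U=\{u: x^u\sim x\}$, which is indeed a subgroup by the multiplicativity relation, and the induced map on~$\gt(G)/\gtz(G)$ is then well defined and injective (injectivity uses that~$\phi(x)\sim x^u\sim x$ for~$u\in U$ puts~$\phi$ in~$\gtz(G)$). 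Since a quotient of~$\left(\z/N\z\right)^\times$ has exponent dividing that of~$\left(\z/N\z\right)^\times$, the bound follows. So the proposal is correct once the last paragraph's formulation is taken as the actual argument; just do not leave the unquotiented version standing as if it were well defined.
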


The statement about the exponent follows from the fact that~$\phi(x) \sim x^k$ for some~$k \in \left(\z / N \z\right)^\times$, whenever~$\phi \in \gt(G)$. So~$\phi^n(x) \sim x^{k^n} = x$ when~$k^n = 1$ mod $N$, and then~$\phi^n \in \gt_1(G)$.

\subsection{Inverse limits} \label{subsec-inverse-limits}

If~$N$ is a normal subgroup of~$F_2$ of finite index, we can always find a~$G$ such that~$N_G \subset N$: indeed it suffices to take~$G = F_2 / N$. From this one can show that 
\[ \lim F_2 / N_G \cong \hat F_2 \, ,   \]
where~$\hat F_2$ is the profinite completion of~$F_2$. Here the inverse limit is over the directed set of all the subgroups of the form~$N_G$ (with their inclusions). Details for this, and everything else in the next few paragraphs, are provided in~\cite{pedro}.

When~$N_G \subset N_H$, we have a map~$\gb \to \bar H$, whose kernel is the intersection of all the subgroups of~$\gb$ having index~$H$. In particular, this kernel is a characteristic subgroup, and as a result we have an induced map 
\[ \gt(G) \longrightarrow \gt(H) \, .   \]
Thus it makes sense to talk about the inverse limit~$\lim \gt(G)$. Again the indexing set for the limit is the set of the various subgroups~$N_G$, but we prefer to write more suggestively 
\[ \lim_G \, \gt(G)  \]
which we call~$\gt$. We also put 
\[ \gtz := \lim_G \gtz(G) \, .   \]

\subsection{The Galois group of~$\q$}

In~\cite{pedro} we prove the existence of a monomorphism 
\[ \Phi \colon \gal \longrightarrow \gt  \]
which is the motivation for the study of~$\gt$. Moreover, if~$\lambda \in \gal$ and if~$\phi = \Phi(\lambda )$, then we can compute for any~$G$ an integer~$k$ such that~$\phi(x)$ and~$x^k$ are conjugate in~$\gb$: namely, let~$N$ be the order of~$x$, let~$\zeta = e^{\frac{2i\pi} {N}}$, and pick~$k$ such that~$\lambda (\zeta) = \zeta^k$.

We write~$\gal'$ for the derived subgroup of~$\gal$ (the closed subgroup generated by the commutators). A celebrated result in number theory asserts that~$\gal'$ is precisely the subgroup of elements acting trivially on all the roots of unity (this is essentially the Kronecker-Weber theorem, see \cite{neukirch}, chapter 5, theorem 1.10) As a result, or simply as an application of lemma~\ref{lem-gtz-abelian}, there is also a monomorphism 
\[ \gal' \longrightarrow \gtz \, .   \]

It is surprising that the lemma below seems hard to prove without appealing to~$\gal$. It is never used on the sequel.

\begin{lem}
Let~$N$ be the order of~$x$ (or~$y$) in~$\gb$. Then for any integer~$k$ prime to~$N$, there is~$\phi \in \gt(G)$ such that~$\phi(x)$ is a conjugate of~$x^k$.
\end{lem}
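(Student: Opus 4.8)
The plan is to deduce the statement from the monomorphism $\Phi \colon \gal \longrightarrow \gt$ together with the explicit recipe, recalled just above, for the integer $k$ attached to an element in the image of $\Phi$. Concretely, if $\lambda \in \gal$ and $\phi = \Phi(\lambda)$, then, writing $\zeta = e^{\frac{2i\pi}{N}}$ with $N$ the order of $x$ in $\gb$, the recipe tells us that $\phi(x)$ is a conjugate of $x^k$ whenever $\lambda(\zeta) = \zeta^k$. So to realize a prescribed $k$ by an element of $\gt(G)$, it suffices to realize it by an element $\lambda \in \gal$ acting on the $N$-th roots of unity through $\zeta \mapsto \zeta^k$.

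First I would fix $k$ prime to $N$ and consider its class in $(\z/N\z)^\times$. The task then reduces to showing that the cyclotomic character
\[ \chi_N \colon \gal \longrightarrow (\z/N\z)^\times, \qquad \lambda \longmapsto \bar k \text{ such that } \lambda(\zeta) = \zeta^k, \]
is surjective. Granting this, I would pick $\lambda \in \gal$ with $\chi_N(\lambda)$ equal to the class of $k$, form $\Phi(\lambda) \in \gt$, and take its image $\phi \in \gt(G)$ under the canonical projection $\gt = \lim_G \gt(G) \to \gt(G)$ of \S\ref{subsec-inverse-limits}. By the recipe applied at this particular $G$, the automorphism representing $\phi$ sends $x$ to a conjugate of $x^k$, which is exactly what is required. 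Everything apart from the surjectivity is thus routine bookkeeping with $\Phi$ and the inverse limit.

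The hard part — indeed the whole content — is the surjectivity of $\chi_N$. This is the classical fact that the restriction of $\chi_N$ to $\Gal(\q(\zeta)/\q)$ induces an isomorphism $\Gal(\q(\zeta)/\q) \cong (\z/N\z)^\times$; equivalently, that the $N$-th cyclotomic polynomial is irreducible over $\q$, so that $[\q(\zeta):\q] = |(\z/N\z)^\times|$ and every class $\bar k$ is realized by a genuine field automorphism. Since $\q(\zeta)/\q$ is Galois, the restriction $\gal \twoheadrightarrow \Gal(\q(\zeta)/\q)$ is surjective, and composing gives surjectivity of $\chi_N$ on all of $\gal$. This arithmetic input — the full Galois action on roots of unity — is precisely what cannot be replaced by bare manipulations inside $\gb$, and it is what makes the lemma hard to prove without appealing to $\gal$, as the author observes. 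I expect the irreducibility of the cyclotomic polynomial (however one chooses to package the surjectivity of $\chi_N$) to be the only genuine obstacle.
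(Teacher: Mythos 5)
Your proposal is correct and follows exactly the paper's own (one-line) proof: take $\phi = \Phi(\lambda)$ for a $\lambda \in \gal$ with the appropriate effect on $N$-th roots of unity, the existence of such $\lambda$ being the surjectivity of the cyclotomic character. You have merely spelled out the arithmetic input (irreducibility of the cyclotomic polynomial) that the paper leaves implicit.
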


\begin{proof}
Simply take~$\phi= \Phi( \lambda )$ where~$\lambda \in \gal$ has the appropriate effect on roots of unity.
\end{proof}

\subsection{$p$-groups and nilpotent groups}

\begin{prop}
If~$G$ is a~$p$-group, then so is~$\gtz(G)$.
\end{prop}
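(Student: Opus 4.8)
The plan is to exploit the fact that every automorphism representing an element of~$\gtz(G)$ moves~$x$ and~$y$ only within their conjugacy classes, so that it acts trivially on the Frattini quotient; the classical stability theorem for $p$-groups then closes the argument.

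First I would record that~$\gb$ is itself a $p$-group. Indeed, by the construction of the concrete model in~\S\ref{subsec-generalities-bato}, the group~$\gb$ is (isomorphic to) a subgroup of~$G^r$; when~$G$ is a $p$-group so is~$G^r$, and hence so is~$\gb$. Write~$|\gb| = p^m$ and let~$\Phi(\gb)$ denote its Frattini subgroup, so that~$V := \gb / \Phi(\gb)$ is an elementary abelian $p$-group, generated by the images~$\bar x, \bar y$ of the distinguished generators. Since~$\Phi(\gb)$ is characteristic, every~$\phi \in Aut(\gb)$ descends to a linear map~$\bar\phi$ of the $\f_p$-vector space~$V$, yielding the usual homomorphism~$Aut(\gb) \to GL(V)$ with some kernel~$K$.

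Next comes the key observation. Let~$\phi \in Aut(\gb)$ represent an element of~$\gtz(G)$: by definition~$\phi(x)$ is conjugate to~$x$, and (using condition~(2) and the parenthetical remark following the definition of~$\gt(G)$) $\phi(y)$ is conjugate to~$y$. Because conjugation is trivial in the abelian group~$V$, we get~$\bar\phi(\bar x) = \bar x$ and~$\bar\phi(\bar y) = \bar y$; as~$\bar x$ and~$\bar y$ generate~$V$, this forces~$\bar\phi = \mathrm{id}_V$. Hence every such~$\phi$ lies in~$K$; that is, the subgroup~$\tilde A \subset Aut(\gb)$ of automorphisms satisfying condition~(1) with~$k = 1$ together with condition~(2) is contained in~$K$. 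I would then invoke Burnside's stability theorem: for a finite $p$-group~$P$, the kernel of~$Aut(P) \to Aut(P/\Phi(P))$ is a normal $p$-subgroup of~$Aut(P)$ (see, e.g., Gorenstein, {\em Finite Groups}). Thus~$K$ is a $p$-group, so its subgroup~$\tilde A$ is a $p$-group, and therefore its image~$\gtz(G)$ in~$Out(\gb)$, being a homomorphic image of~$\tilde A$, is a $p$-group as well.

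I expect the only real subtlety to live in the third paragraph: one must remember that~$Aut$ of a $p$-group is typically {\em not} a $p$-group (already~$Aut((\z/p)^2) = GL_2(\f_p)$ is not), so the whole force of the argument rests on the defining condition of~$\gtz(G)$---fixing~$x$ and~$y$ up to conjugacy---being precisely what guarantees triviality on~$V$ and hence membership in the $p$-subgroup~$K$. Everything else is bookkeeping: that~$\gb$ embeds into~$G^r$, and that homomorphic images of $p$-groups are again $p$-groups.
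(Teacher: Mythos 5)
Your proof is correct and follows essentially the same route as the paper: note that~$\gb$ is a $p$-group, observe that any representative of an element of~$\gtz(G)$ fixes~$\bar x$ and~$\bar y$ in the Frattini quotient and hence acts trivially there, and conclude via the Burnside basis theorem. The paper merely packages the last step as a contradiction (a hypothetical element of prime order~$\ell \ne p$ acting trivially on~$\gb/\Phi(\gb)$ must act trivially on~$\gb$), while you cite directly that the kernel of~$Aut(\gb) \to Aut(\gb/\Phi(\gb))$ is a $p$-group; these are the same theorem.
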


\begin{proof}
First note that~$\gb$ is itself a~$p$-group, being a subgroup of~$G^r$. Let~$A'(\gb)$ denote the preimage of~$\gtz(\gb)$ in~$Aut(\gb)$. If~$A'(\gb)$ is not a~$p$-group, then it contains an element~$\phi$ whose order is a prime~$\ell \ne p$.

Consider the elementary abelian~$p$-group~$E =\gb / \Phi(\gb)$, where~$\Phi(\gb)$ is the Frattini subgroup of~$\gb$, generated by the images~$\bar x$ and~$\bar y$ of~$x$ and~$y$. The induced action of~$\phi$ on~$E$ is then trivial. It follows from~\cite{isaacs}, Corollary 3.29 (a result sometimes referred to as the Burnside basis theorem), that the action of~$\phi$ on~$\gb$ is trivial, violating the assumption that the order of~$\phi$ is~$\ell$. This contradiction shows that~$A'(\gb)$ is a~$p$-group, and so also is~$\gtz(G)$.
\end{proof}

\begin{prop}
If~$G$ and~$H$ have coprime orders, we have~$\bar{G\times H} \cong \bar G \times \bar H$ and~$\gt(G\times H) \cong \gt(G) \times \gt(H)$, as well as~$\gtz(G\times H) \cong \gtz(G) \times \gtz(H)$.
\end{prop}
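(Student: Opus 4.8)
The plan is to deduce all three isomorphisms from two facts: an isomorphism $\bar{G\times H}\cong \bar G\times \bar H$ carrying the distinguished generators $x,y$ of the left-hand side to the pairs $(x_G,x_H)$ and $(y_G,y_H)$ on the right; and the observation that, because $|\bar G|$ and $|\bar H|$ are coprime, the automorphism group splits as $Aut(\bar G\times \bar H)\cong Aut(\bar G)\times Aut(\bar H)$. Granting these, the statements for $\gt$ and $\gtz$ follow by checking that the defining conditions (1) and (2) are ``componentwise''. Note first that $|\bar G|$ and $|\bar H|$ are indeed coprime: $\bar G$ embeds in $G^r$, so every prime dividing $|\bar G|$ divides $|G|$, and likewise for $H$.

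First I would establish $\bar{G\times H}\cong \bar G\times \bar H$ at the level of $F_2$. By coprimality and Goursat's lemma, a surjection $F_2\to G\times H$ is the same datum as a pair of surjections $F_2\to G$ and $F_2\to H$, the only common quotient of $G$ and $H$ being trivial. Consequently the normal subgroups of index $G\times H$ are exactly the intersections $N_1\cap N_2$ with $N_1$ of index $G$ and $N_2$ of index $H$; here $N_1N_2=F_2$ (their common quotient is trivial), so indeed $F_2/(N_1\cap N_2)\cong G\times H$. Intersecting over all such pairs gives $N_{G\times H}=N_G\cap N_H$. As $F_2/N_G=\bar G$ and $F_2/N_H=\bar H$ have coprime order, $N_GN_H=F_2$, whence
\[ \bar{G\times H}=F_2/(N_G\cap N_H)\cong \bar G\times \bar H, \]
and this isomorphism sends the image of $x$ to $(x_G,x_H)$ and the image of $y$ to $(y_G,y_H)$, as required.

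Next I would record that coprimality forces $Aut$ and $Out$ to split. The subgroup $\bar G\times 1$ is precisely the set of elements of $\bar G\times \bar H$ whose order divides $|\bar G|$, hence characteristic, and similarly for $1\times \bar H$; therefore $Aut(\bar G\times \bar H)\cong Aut(\bar G)\times Aut(\bar H)$, and since inner automorphisms split the same way, $Out(\bar G\times \bar H)\cong Out(\bar G)\times Out(\bar H)$. Under this identification, $\theta$ and $\delta$ correspond to $(\theta_G,\theta_H)$ and $(\delta_G,\delta_H)$: each automorphism is determined by its effect on the generators $(x_G,x_H),(y_G,y_H)$, which do generate $\bar G\times \bar H$ by Goursat again, and $(\theta_G,\theta_H)$ visibly realizes the swap $x\leftrightarrow y$, with the analogous check for $\delta$. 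Writing a general $\phi$ as $(\phi_G,\phi_H)$, condition (2) becomes the conjunction of the commutation relations for $\phi_G$ in $Out(\bar G)$ and for $\phi_H$ in $Out(\bar H)$, since commutation in a direct product is componentwise.

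It remains to handle condition (1), which is where the one genuinely arithmetic point arises. Conjugacy in $\bar G\times \bar H$ is componentwise and $x^k=(x_G^k,x_H^k)$, so $\phi(x)\sim x^k$ is equivalent to $\phi_G(x_G)\sim x_G^k$ and $\phi_H(x_H)\sim x_H^k$ holding simultaneously. For $\gtz$ this is immediate with $k=1$, yielding at once $\gtz(G\times H)\cong \gtz(G)\times \gtz(H)$. For $\gt$ the subtlety I expect to be the main obstacle is that condition (1) for $G\times H$ demands a \emph{single} exponent $k$, whereas $\phi_G$ and $\phi_H$ a priori come with different exponents $k_G,k_H$. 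Here I would invoke the Chinese remainder theorem: the orders $N_G,N_H$ of $x_G,x_H$ are coprime, so one can choose $k$ with $k\equiv k_G\pmod{N_G}$ and $k\equiv k_H\pmod{N_H}$; since only the residue of $k$ modulo $N_GN_H$ (the order of $x$) affects $x^k$, one may then adjust $k$ by multiples of $N_GN_H$ to make it prime to $|\bar G||\bar H|$, avoiding each extra prime divisor in turn. This shows $(\phi_G,\phi_H)$ satisfies condition (1) for $G\times H$ exactly when $\phi_G,\phi_H$ satisfy it for $G,H$; the reverse implication is trivial, as an exponent prime to $|\bar G||\bar H|$ restricts to an admissible one on each factor. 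Combining the translations of (1) and (2) and passing to $Out$ gives $\gt(G\times H)\cong \gt(G)\times \gt(H)$, completing the proof.
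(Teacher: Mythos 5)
Your proof is correct and follows essentially the same route as the paper: establish $N_{G\times H}=N_G\cap N_H$ via coprimality, deduce $\bar{G\times H}\cong\bar G\times\bar H$ with the generators matching up, and observe that automorphisms, $\theta$, $\delta$, and the defining conditions all split componentwise. The one place you go beyond the paper is the Chinese-remainder argument producing a single exponent $k$ for condition (1) in the $\gt$ case --- the paper dismisses this step with ``it follows easily,'' and your explicit treatment is a legitimate filling-in of that gap rather than a different approach.
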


\begin{proof}
We start with a remark. Whenever a group~$N$ has index~$G \times H$ in a group~$\Gamma $, then we can write~$N = N' \cap N''$ where~$N'$ has index~$G$ and~$N''$ has index~$H$, clearly. Now suppose the orders of~$G$ and~$H$ are coprime, and let us prove the converse. If~$N = N' \cap N''$ for such~$N'$ and~$N''$, then~$\Gamma /N$ injects in~$\Gamma / N' \times \Gamma /N'' \cong G \times H$, and its image surjects onto both~$G$ and~$H$. Thus the order of~$\Gamma /N$ is divisible by both~$|G|$ and~$|H|$ and so by their product, so that~$\Gamma/N \cong G \times H$, as we wished to show.

Applying this remark to the subgroups of the free group~$F_2$, we deduce that 
\[ N_{G \times H} = N_G \cap N_H \tag{*} \, .   \]
(Recall that~$N_G$ is the intersection of the subgroups of index~$G$, and likewise for~$N_H$ and~$N_{G \times H}$.) 

What is more, $\gb$ and~$\bar H$ also have coprime orders since they are subgroups of~$G^r$ and~$H^s$ respectively. Thus we may apply the remark again, and deduce from (*) that~$N_{G \times H}$ has index~$\gb \times \bar H$ (being the intersection of a group of index~$\gb$ and a group of index~$\bar H$). This shows that there is an isomorphism~$\bar{G \times H} \to \gb \times \bar H$.


Next we note that an automorphism of~$\bar G \times \bar H$ must be of the form~$\alpha \times \beta $ where~$\alpha \in Aut(\bar G)$ and~$\beta \in Aut(\bar H)$. It follows easily that~$\gt(G\times H) \cong \gt(G) \times \gt(H)$.
%
%
\end{proof}

\begin{coro}
If~$G$ is nilpotent, then so is~$\gtz(G)$.
\end{coro}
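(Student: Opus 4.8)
The plan is to reduce the nilpotent case to the $p$-group case already established, using the structure theorem for finite nilpotent groups together with the preceding proposition on coprime direct products.

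First I would recall that a finite nilpotent group $G$ decomposes canonically as a direct product of its Sylow subgroups, $G \cong \prod_p G_p$, where $G_p$ is the (unique, normal) Sylow $p$-subgroup. Since the orders of the various $G_p$ are pairwise coprime, the previous proposition applies repeatedly to yield an isomorphism
\[ \gtz(G) \cong \prod_p \gtz(G_p) \, . \]
Next, each $G_p$ is a $p$-group, so by the proposition on $p$-groups each factor $\gtz(G_p)$ is itself a $p$-group, hence in particular nilpotent. Finally, a finite direct product of nilpotent groups is nilpotent (a product of groups of coprime order is automatically nilpotent, being itself the direct product of its Sylow subgroups), so $\gtz(G)$ is nilpotent, as claimed.

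The main thing to check is that the coprimality hypothesis of the preceding proposition is genuinely met at each stage, and that the proposition as stated extends from the two-factor case to an arbitrary finite product. The former is immediate from the definition of the Sylow decomposition. For the latter, I would simply iterate the two-factor isomorphism, grouping $G \cong G_p \times (\prod_{q \ne p} G_q)$ and noting that the orders of the two grouped factors remain coprime; a trivial induction on the number of distinct primes dividing $|G|$ then gives the full product decomposition. I do not anticipate a serious obstacle here, since all the substantive work has been done in the two earlier statements; this corollary is essentially a formal consequence and its proof should be only a few lines.
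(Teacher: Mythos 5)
Your proof is correct and follows exactly the route the paper intends: decompose the nilpotent group as the product of its Sylow subgroups, apply the coprime-product proposition iteratively, and invoke the $p$-group result for each factor. The paper's own proof is just the one-line observation that a finite group is nilpotent precisely when it is a direct product of $p$-groups, so your version simply spells out the same argument in more detail.
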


\begin{proof}
A finite group is nilpotent precisely when it is a direct product of~$p$-groups.
\end{proof}

\section{An elementary example: dihedral groups}

In this section we present a computation of~$\gtz(D_n)$ where~$D_n$ is the dihedral group of order~$2n$ (with details only when~$n$ is odd). It is simple enough to be carried out ``by hand'' to the end, while by contrast the methods developped in the sequel ultimately rely on computers when put to practice. We believe that many features of~$\gtz(G)$ are already visible here.

So let~$s$ and~$t$ be involutions generating~$G = D_n$, and let~$R= st$, so that the~$2n$ elements of~$G$ are the ``rotations''~$R^m$ and the involutions~$sR^m$, for~$0 \le m < n$. It is easily seen that a pair of generators~$(x_1, x_2)$ for~$G$ can be taken by an automorphism to one of~$(s, t)$, $(R, t)$ or~$(s, R)$. As a result, $\gb$ is the subgroup of~$G^3$ generated by~$x = (s, R, s)$ and~$y = (t, t, R)$.

From now on, we assume that~$n$ is odd, and we proceed to prove that~$\gtz(G)$ has order~$2$. We shall state the corresponding results for other values of~$n$ below.

\subsubsection*{Observations.}
First we describe the group~$\gb$ a to some extent. To do so, we observe that the abelianization of~$G$ is~$C_2$, so~$G^3$ has projects onto~$C_2^3 = \{ (\pm 1, \pm 1, \pm 1) \}$, and looking at the images of~$x$ and~$y$ we see that~$\gb$ maps onto the subgroup of elements~$(a, b, c)$ with~$abc= 1$. Thus the index of~$\gb$ in~$G^3$ is at least~$2$. However, since~$x^2 = (1, R^2, 1)$ and the order of~$R$ is odd, we see that~$(1, R, 1) \in \gb$ ; likewise, starting with~$y$ and~$xy$, we see that~$(1, 1, R)$ and~$(R, 1, 1)$ are in~$\gb$. There is thus a subgroup~$A \cong C_n^3 \subset \gb$, and the order of~$\gb$ is a multiple of~$n^3$. Finally, note that~$A$ is normal in~$G^3$ and hence also in~$\gb$, and the quotient~$\gb / A$ is easily seen to be~$C_2 \times C_2$, so the order of~$\gb$ is~$4n^3$ and its index in~$G^3$ is just~$2$. In passing we have established a recipe for checking whether an element~$(\alpha ,  \beta ,  \gamma ) \in G^3$ belongs to~$\gb$: namely, this is the case if and only if there are an even number of involutions among~$\alpha, \beta , \gamma $.

It will be useful to know the centralizer~$C_{\gb}(y)$ of~$y$ in~$\gb$. First off, the centralizer in~$G^3$ is~$C_{G^3}(y) = C_2\times C_2 \times C_n$ generated by~$(t, 1, 1)$, $(1, t, 1)$ and~$(1, 1, R)$, so it has order~$4n$. Since the order of~$y$ is~$2n$ (using that~$n$ is odd), and since there are elements in~$C_{G^3}(y)$ which are not in~$\gb$, such as~$(t, 1, 1)$, we conclude that~$C_{\gb}(y)= \langle y \rangle$.


\subsubsection*{Choices for~$\phi$.} 

Now let~$\phi \in Aut(\gb)$ represent an element of~$\gtz(G)$. Composing with an inner automorphism if necessary, we may assume that~$\phi(y) = y$, and we know that~$\phi(x) = x'$ can be conjugated to~$x$ within~$\gb$, and so also within~$G^3$. Put~$x' = (s', R', s'')$, where~$s'$ and~$s''$ are involutions and~$R'= R^{\pm 1}$ is a rotation. 

Now suppose~$\psi$ is another such automorphism of~$\gb$, with~$\psi(y) = y$ and~$\psi(x) = x''$, a conjugate of~$x$. Then $\phi$ and~$\psi$ differ by an inner automorphism, or equivalently represent the same element in~$\gtz(G)$, if and only if~$x'$ can be conjugated to~$x''$ by an element of~$C_{\gb}(y) = \langle y \rangle$. 

Here we point out that all the involutions in~$G$ are conjugate, and indeed can be conjugated to one another using a power of~$R$: using the notation~$a^b$ for~$b^{-1} a b$, this follows from~$(sR^i)^R = sR^{i+2}$ and the fact that the order of~$R$ is odd. Given that~$y=(t, t, R)$, we can clearly conjugate~$x'$ by a power of~$y$ to obtain an element whose third coordinate is any involution we want, say~$s$. In other words, we may assume that~$s'' = s$ without loss of generality. Conjugating further by~$y^n = (t, t, 1)$ if necessary, we may assume that~$R' = R$, that is~$x' = (s', R, s)$. Different choices for~$s'$ can only lead to different elements of~$\gtz(G)$.

We must have~$s' = sR^m$ for an integer~$m$ (taken mod~$n$). The next step is to show that there are only two possibilities for~$m$.

\subsubsection*{The condition involving~$\delta  $.} This will be imposed by the condition stating that~$\phi$ and~$\delta $ must commute in~$Out(\gb)$, by definition of~$\gtz(G)$. Recall that~$\delta (y) = y = (t, t, R)$ and~$\delta (x) = y^{-1} x^{-1} = (ts, tR^{-1}, R^{-1}s) = (R^{-1}, s, t)$. Pick a power of~$R$, say~$R^p$, such that~$s^{R^p} = t$. As the element~$(t, 1, R^p)$ commutes with~$y$, and~$(R, s, s)^{(t, 1, R^p)} = (R^{-1}, s, t)$, we conclude that 
\[ \delta (a, b, c) = (b, a, c)^{(t, 1, R^p)} \, ,  \]
for any~$(a, b, c) \in \gb$. Indeed, both sides of this equation define homomorphisms~$\gb \to G^3$, and they agree on~$x$ and~$y$. The attentive reader will notice that finding a simple expression for~$\delta$, replacing the definition in terms of the generators~$x$ and~$y$, is a silent but major theme in all the rest of the paper, and the same applies to~$\theta $.

We are now able to compute~$\delta (\phi(x)) = \delta (x') = (R^{-1}, -, -)$ (what happens with the second and third coordinates turns out to be irrelevant for the sequel, and would be distracting to look at). On the other hand~$\phi(\delta (x)) = y^{-1}(x')^{-1} = (ts', -, -) = (R^{m-1}, -, -)$. And of course~$\delta (\phi(y)) = \phi(\delta (y)) = y$.

The condition on~$\phi$ thus states the existence of~$c \in \gb$ such that (i) $y^c = y$, that is~$c$ centralizes~$y$, and (ii) $(R^{m-1}, -, -)^c = (R^{-1}, -, -)$. By the observation above, (i) implies $c \in \langle y \rangle$. The element~$c$, in particular, is of the form~$(1, -, -)$ or~$(t, -, -)$. 

Each possibility implies a value for~$m$. Indeed if~$c =(1, -, -)$, condition (ii) gives~$R^{-1} = R^{m-1}$ so that~$m=0$.  The case~$c= (t, -, -)$ yields~$R = R^{m-1}$ so that~$m= 2$.

\subsubsection*{Existence} We know now that there can be at most two elements in~$\gtz(G)$: the identity and the class of a potential automorphism~$\phi$ such that~$\phi(y) = y$ and~$\phi(x) = x' = (sR^2, R, s)$. To show that such an automorphism actually exists, we may simply consider conjugation by the element~$(t, 1, 1) \in G^3$, which does not belong to~$\gb$. 

We are left with the task of checking that~$\phi$ really defines an element of~$\gtz(G)$, that is, it must be verified that~$\phi$ and~$\theta $ commute up to an inner automorphism. Recall that~$\theta (x) = y$ and~$\theta (y) = x$. Using that~$x' = x (xy)^2$, a straightforward computation shows that we must find an element which simultaneously conjugates~$y (yx)^2 = (tR^{-2}, t, R)$ to~$y=(t, t, R)$ and~$x= (s, R, s)$ to~$x' = (sR^2, R, s)$. For this one may take~$(R, 1, 1)$.

We have proved the first part of the following proposition:

\begin{prop} \label{prop-dihedral-summary}
If~$n$ is odd, then the group~$\gtz(D_n)$ has order~$2$.

If~$n= 2k$ and~$k$ is odd, then the group~$\gtz(D_n)$ also has order~$2$. If~$k$ is even, then the group~$\gtz(D_n)$ is trivial.
\end{prop}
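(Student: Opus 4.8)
The plan is to run the same scheme as for odd~$n$: describe~$\gb$, determine the centralizer~$C_{\gb}(y)$, reduce to a representative~$\phi$ with~$\phi(y)=y$, and then use the conditions involving~$\delta$ and~$\theta$ to pin down~$\phi(x)$. The classification of generating pairs is unchanged (a generating pair of two reflections must have exponent-difference prime to~$n$, hence odd, and one recovers exactly the three orbits~$(s,t)$, $(R,t)$, $(s,R)$), so~$\gb$ is again the subgroup of~$D_n^3$ generated by~$x=(s,R,s)$ and~$y=(t,t,R)$. Two features now differ and must be tracked throughout: the rotation~$R$ has even order, so~$D_n$ has a central involution~$R^k$ with~$k=n/2$; and the reflections split into the two conjugacy classes~$\{sR^{2j}\}$ and~$\{sR^{2j+1}\}$, no longer fused. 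I would record the structure of~$\gb$ by projecting to the abelianization~$D_n^{\mathrm{ab}}\cong C_2\times C_2$: this exhibits~$\gb$ as the preimage of an explicit subgroup of order~$4$ in~$(C_2\times C_2)^3$, so that membership of~$(\alpha,\beta,\gamma)$ in~$\gb$ is governed by a short list of parity conditions that couple the three coordinates.

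Two preliminary computations then carry the whole argument. First, because~$s$ and~$t$ are no longer conjugate, the clean ``swap-and-conjugate'' formula for~$\delta$ from the odd case is unavailable; instead one finds that~$\delta$ swaps the first two coordinates and applies automorphisms coordinatewise, one of these (acting on the third coordinate by~$s\mapsto sR$ and~$R\mapsto R$) being the outer automorphism that exchanges the two reflection classes, and the same phenomenon occurs for~$\theta$. Getting these simple expressions right is the first piece of real work. Second, I would compute~$C_{\gb}(y)$: the centralizer~$C_{D_n}(t)=\{1,t,R^k,tR^k\}$ now has order~$4$ because of the central involution, but the membership conditions on~$\gb$ couple the reflection-versus-rotation type of the first two coordinates of an element of~$C_{\gb}(y)$ to the parity of the exponent of its rotation third coordinate. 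It is exactly here that the parity of~$k$ first enters: when~$k$ is odd one gets~$C_{\gb}(y)=\langle y\rangle$, precisely as in the odd-$n$ case, whereas when~$k$ is even the further elements~$R^k$ and~$tR^k$ of~$C_{D_n}(t)$ also satisfy the membership conditions and~$C_{\gb}(y)$ is strictly larger.

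With these in hand the argument proceeds as before. After arranging~$\phi(y)=y$, write~$\phi(x)=(s',R',s'')$ with~$s',s''$ reflections and~$R'=R^{\pm1}$, and impose the~$\delta$-condition coordinate by coordinate. As in the odd case the first coordinate leaves exactly two candidates for the exponent of~$s'$, and one of these returns~$\phi(x)=x$, the identity. The decisive point is whether the nontrivial candidate survives. Realizing the~$\delta$-commutation for it forces the conjugating element~$c\in C_{\gb}(y)$ to act by a reflection in the first two coordinates (this is what produces the required exponent shift); membership of~$c$ in~$\gb$ then forces the exponent of its third coordinate to be \emph{odd}, whereas the third-coordinate equation of the~$\delta$-condition separately requires that third coordinate to centralize an odd-class reflection, forcing its exponent into~$\{0,k\}$ modulo~$n$. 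Since~$\{0,k\}$ contains an odd number if and only if~$k$ is odd, the nontrivial candidate is admissible exactly for~$k$ odd. After intersecting the admissible shapes with the~$\gb$-conjugacy class of~$x$ and quotienting by~$C_{\gb}(y)$, one checks that for~$k$ odd these organize into two orbits and for~$k$ even into the single orbit of~$x$. Thus for~$k$ odd one obtains, after confirming the surviving automorphism via the~$\theta$-condition exactly as in the odd case, that~$\gtz(D_n)$ has order~$2$, while for~$k$ even~$\gtz(D_n)$ is trivial.

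The main obstacle is concentrated in the two preliminary computations and in threading their output through the third paragraph. Deriving the simple expressions for~$\delta$ and~$\theta$ with the correct outer-automorphism twist, and then pinning down~$C_{\gb}(y)$ together with the precise way~$\gb$-membership couples coordinate type to exponent parity, is where all the care is needed; once that coupling is explicit, the whole dichotomy reduces to the single observation that~$\{0,k\}$ contains an odd number exactly when~$k$ is odd. I would therefore expect to spend most of the effort making the membership conditions on~$C_{\gb}(y)$ fully explicit, since it is their interaction with the third coordinate of the~$\delta$-condition that produces the entire statement.
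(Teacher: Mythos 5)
Your proposal follows exactly the scheme the paper uses for odd~$n$ (describe~$\gb$ via the abelianization, compute~$C_{\gb}(y)$, normalize~$\phi(y)=y$, and extract the admissible exponents from the~$\delta$-condition), and it correctly supplies the even case that the paper explicitly leaves as ``a lengthy exercise'': the outer twist in the coordinatewise formula for~$\delta$, the coupling between the type of the first two coordinates of an element of~$C_{\gb}(y)$ and the parity of the exponent of its third, and the final observation that~$\{0,k\}$ contains an odd residue exactly when~$k$ is odd all check out. This is the same approach as the paper's, carried to completion.
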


The rest of the proposition is left as a lengthy exercise. Note that when~$n= 2k$, the group~$\gb$ has order~$4k^3$ and so has index~$16$ in~$G^3$.

Let us say a word about the image of~$\gtz$ in~$\gtz(D_n)$. Let us use the notation~$s_n$, $t_n$ and~$R_n$ for the elements in~$D_n$ written~$s$, $t$, $R$
 up to now. There is a homomorphism~$D_{nm} \to D_n$ sending~$s_{nm}$ to~$s_n$, $t_{nm}$ to~$t_n$, and~$R_{nm}$ to~$R_n$. Clearly the induced homomorphism~$D_{nm}^3 \to D_n^3$ maps~$\bar D_{nm}$ onto~$\bar D_n$. It is a general fact, already mentioned in~\S\ref{subsec-inverse-limits}, that in this situation there is a map~$\gtz(D_{nm}) \to \gtz(D_n)$. 

The projection map~$\gtz \to \gtz(D_n)$ thus factors through~$\gtz(D_{nm})$ for any~$m$, in particular through~$\gtz(D_{4n}) = 1$. As a result, the image of~$\gtz$ in~$\gtz(D_n)$ is trivial, for all~$n$. What amounts essentially to the same thing, the inverse limit~$\lim_n \gtz(D_n)$ makes sense here, but sadly, it is trivial.

\section{The case of simple groups} \label{sec-simple}

For any finite group~$G$, we define a permutation group~$\s(G)$. When~$G$ is simple and non-abelian, we proceed to show that there is an isomorphism~$\gtz(G) \cong \s(G)$. This is used to analyse the possible simple factors in~$\gtz(G)$ in this case.

\subsection{Notation}

Let~$G$ be a finite group (shortly to be assumed simple and non-abelian, but not at the moment). The following notation will be used throughout this section. Let us emphasize that we make some arbitrary {\em choices} at the same time. 

Let~$\pairs$ denote the set of pairs of elements~$(g, h)$ generating~$G$. The group~$Aut(G)$ acts on~$\pairs$, and the set~$\pairs / Aut(G)$ of orbits has cardinality~$r$. It will be useful to also work with~$\pc$, the set of orbits under the sole action of the inner automorphisms. We see that~$Out(G)$ acts freely on~$\pc$, and~$\pc / Out(G) = \pairs / Aut(G)$. Thus the set~$\pc$ has cardinality~$r |Out(G)|$. (Please note that the actions considered here are on the left. In this section the composition on~$Aut(G)$ is~$\alpha \beta  = \alpha \circ \beta $.)

For each~$1 \le i \le r$ we choose a representative~$(x_i, y_i) \in \pairs$ for the~$i$-th orbit in~$\pairs/Aut(G)$, in some ordering.

The~$Aut(G)$-orbit of~$(g, h) \in \pairs$ will be denoted~$[g, h]$, while its orbit under~$Inn(G)$ will be written~$[g, h]_c$ (the brackets will never denote commutators in this section). In this notation the action of~$ \alpha \in Out(G)$ on~$[g, h]_c$ is~$\alpha \cdot [g, h]_c = [\alpha (g), \alpha (h)]_c$. The elements of~$\pc$ are precisely enumerated as~$[\alpha (x_i), \alpha (y_i)]_c$ for~$\alpha \in Out(G)$ and~$1 \le i \le r$. The following is immediate.

\begin{lem} \label{lem-pc-as-product}
There is a bijection of sets 
\[ \pc \longrightarrow Out(G) \times \pairs/Aut(G) \, ,  \]
sending~$ [\alpha (x_i), \alpha (y_i)]_c$ to the pair~$(\alpha, [x_i, y_i])$. It is equivariant with respect to the~$Out(G)$ actions, where on the right hand side the group~$Out(G)$ acts trivially on~$\pairs/Aut(G)$ and by left multiplication on itself.
\end{lem}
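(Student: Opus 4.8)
The plan is to prove the bijection by exhibiting an explicit inverse and then to read off the equivariance directly from the definitions. First I would write the map of the statement as
\[ \Psi \colon \pc \longrightarrow Out(G) \times \pairs/Aut(G), \qquad \Psi\big([\alpha(x_i), \alpha(y_i)]_c\big) = (\alpha, [x_i, y_i]), \]
and construct a candidate inverse in the opposite direction,
\[ \Lambda \colon Out(G) \times \pairs/Aut(G) \longrightarrow \pc, \qquad \Lambda(\alpha, [x_i, y_i]) = [\tilde\alpha(x_i), \tilde\alpha(y_i)]_c, \]
where $\tilde\alpha \in Aut(G)$ is any lift of $\alpha \in Out(G) = Aut(G)/Inn(G)$ and the second argument is encoded by its index $i$ (the $[x_i, y_i]$ being, by construction, exactly the $r$ elements of $\pairs/Aut(G)$). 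The one point requiring care is that $\Lambda$ is well-defined, independently of the lift: if $\tilde\alpha'$ is another lift then $\tilde\alpha' = \iota_g \circ \tilde\alpha$ for some inner automorphism $\iota_g$, so $(\tilde\alpha'(x_i), \tilde\alpha'(y_i)) = (g\,\tilde\alpha(x_i)\,g^{-1}, g\,\tilde\alpha(y_i)\,g^{-1})$ lies in the same $Inn(G)$-orbit and hence defines the same class in $\pc$.

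Next I would check that $\Lambda$ is a bijection. For surjectivity, take any $[g, h]_c \in \pc$; its image under the projection $\pc \to \pairs/Aut(G)$ is $[x_i, y_i]$ for a unique $i$, so some $\tilde\alpha \in Aut(G)$ sends $(x_i, y_i)$ to $(g, h)$, whence $\Lambda(\alpha, [x_i, y_i]) = [g, h]_c$ with $\alpha$ the image of $\tilde\alpha$. For injectivity, suppose $\Lambda(\alpha, [x_i, y_i]) = \Lambda(\alpha', [x_j, y_j])$; applying the same projection collapses the two sides to $[x_i, y_i]$ and $[x_j, y_j]$, forcing $i = j$. With $i = j$ fixed, the equality of $Inn(G)$-classes yields a single $g \in G$ with $\iota_g\tilde\alpha(x_i) = \tilde\alpha'(x_i)$ and $\iota_g\tilde\alpha(y_i) = \tilde\alpha'(y_i)$; since $x_i$ and $y_i$ generate $G$, the automorphisms $\iota_g \circ \tilde\alpha$ and $\tilde\alpha'$ agree on generators and hence everywhere, so $\alpha = \alpha'$ in $Out(G)$. (Alternatively, surjectivity alone would finish the proof once one invokes the count $|\pc| = r\,|Out(G)| = |Out(G) \times \pairs/Aut(G)|$ already recorded.) One then verifies that $\Psi$ and $\Lambda$ are mutually inverse, so $\Psi$ is the asserted bijection.

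Finally I would confirm $Out(G)$-equivariance. For $\beta \in Out(G)$ the action on $\pc$ gives $\beta \cdot [\tilde\alpha(x_i), \tilde\alpha(y_i)]_c = [(\tilde\beta \circ \tilde\alpha)(x_i), (\tilde\beta \circ \tilde\alpha)(y_i)]_c$, and since the composition convention on $Aut(G)$ is $\tilde\beta\tilde\alpha = \tilde\beta \circ \tilde\alpha$, the automorphism $\tilde\beta \circ \tilde\alpha$ represents $\beta\alpha$; thus this class maps under $\Psi$ to $(\beta\alpha, [x_i, y_i])$. On the target, $\beta \cdot (\alpha, [x_i, y_i]) = (\beta\alpha, [x_i, y_i])$ by the prescribed actions (left multiplication on $Out(G)$, trivial on $\pairs/Aut(G)$), so the two agree. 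I expect no genuine obstacle here: the entire argument reduces to the single structural fact that each chosen pair $(x_i, y_i)$ generates $G$ — which pins down an automorphism by its values on $x_i, y_i$ (giving injectivity) — together with the distinctness of the representatives (which pins down the index $i$); everything else is bookkeeping, in keeping with the paper's remark that the statement is immediate.
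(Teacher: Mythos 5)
Your argument is correct and is exactly the argument the paper has in mind: the paper declares the lemma ``immediate'' after noting that $Out(G)$ acts freely on $\pc$ with orbit set $\pairs/Aut(G)$, and your injectivity step (an automorphism is pinned down by its values on the generating pair $x_i, y_i$) is precisely the verification of that freeness. No gaps; the well-definedness, surjectivity, and equivariance checks are all as the paper intends.
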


Finally, each pair~$(x_i, y_i)$ determines a unique homomorphism~$p_i \colon \gb \to G$ sending~$x$ and~$y$ to~$x_i$ and~$y_i$ respectively (recall that~$x$ and~$y$ are the canonical generators of~$\gb$). The kernel of~$p_i$ will be written~$K_i$.

\begin{rmk}
In the literature on dessins d'enfants or related group-theoretical topics, one often works with triples~$(x, y, z)$ of elements generating a finite group~$G$ and satisfying~$xyz = 1$. Our~$\pairs$ can be identified with the set of such triples, clearly, and~$\pc$ can be thought of as the set of triples up to simultaneous (triple) conjugation. Likewise the rest of this section could be developed with this (hardly different) point of view.
\end{rmk}

\subsection{An action of~$Out(\gb)$ on~$\pc$}

First recall (from the discussion in~$\S\ref{subsec-generalities-bato}$) that the~$K_i$'s form a characteristic family of subgroups in~$\gb$; in other words, for any~$\phi \in Aut(\gb)$ and any~$i$ there is a~$\sigma (i)$ such that~$\phi(K_i) = K_{\sigma (i)}$. The permutation~$\sigma \in S_r$ thus obtained from~$\phi$ may occasionally be denoted~$\sigma (\phi)$. 

Next, the composition~$\gb \stackrel{\phi}{\longrightarrow} \gb \stackrel{p_{\sigma (i)}}{\longrightarrow} G$ factors through~$p_i$, thus resulting in an automorphism~$G \to G$ which we denote~$\phi_i$. There is the composition formula 
\[ (\psi \circ \phi)_i = \psi_{\sigma (i)} \circ \phi_i \quad\textnormal{where}\quad \sigma = \sigma (\phi)\, .   \]
Also observe that when~$\phi$ is inner, the permutation~$\sigma (\phi)$ is the identity, and each~$\phi_i$ is inner.

We can now define an action of~$Aut(\gb)$ on~$Out(G) \times \pairs/Aut(G)$ by setting 
\[ \phi \cdot (\alpha , [x_i, y_i]) = (\alpha \phi_i^{-1} , [x_{\sigma (i)}, y_{\sigma (i)}]) \, .   \]
(On the second factor this is the natural action on~$\pairs/Aut(G)$, which can be identified with the set of the~$K_i$'s.) In this expression we have written~$\phi_i^{-1}$ for the class of this automorphism in~$Out(G)$. It is clear that this is indeed an and that it factors through~$Out(\gb)$.

Crucially, we notice that the action just defined commutes with that of~$Out(G)$ (by left multiplication on itself and trivially on~$\pairs/Aut(G)$).

By lemma~\ref{lem-pc-as-product}, we also have an action of~$Out(\gb)$ on~$\pc$, which commutes with the natural action of~$Out(G)$. We have in particular 
%
\[ \phi \cdot [x_i, y_i]_c  =  [\phi_i^{-1}(x_{\sigma (i)}), \phi_i^{-1} ( y_{\sigma (i)} )]_c \, ,  \]
which we will use more often than the general expression
%
\[ \phi \cdot \left( [\alpha (x_i), \alpha (y_i)]_c \right)  =  [\alpha \phi_i^{-1} (x_{\sigma (i)} ), \alpha \phi_i^{-1} (y_{\sigma (i)} )]_c  \, .  \]
(Commutation with~$Out(G)$ means that the first formula implies the second anyway.)

This discussion is summarized in the next proposition.  

\begin{prop} \label{prop-map-to-centraliser}
There is a homomorphism 
\[ Out(\gb) \longrightarrow C_S( Out(G) ) \, ,   \]
where~$S= S(\pc)$ is the symmetric group of the set~$\pc$, and~$C_S(Out(G))$ is the centralizer of~$Out(G)$ for the natural action. The corresponding action of~$Out(\gb)$ on~$\pc$ satisfies in particular 
%
\[ \phi \cdot [x_i, y_i]_c  =  [\phi_i^{-1}(x_{\sigma (i)}), \phi_i^{-1} ( y_{\sigma (i)} )]_c \, ,  \]

\end{prop}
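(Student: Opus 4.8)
The plan is to establish Proposition \ref{prop-map-to-centraliser} by verifying three things in sequence: that the prescribed formula genuinely defines a left action of $Aut(\gb)$ on $Out(G) \times \pairs/Aut(G)$; that this action descends to $Out(\gb)$; and that under the bijection of Lemma \ref{lem-pc-as-product} the resulting action on $\pc$ commutes with the natural $Out(G)$-action, so that the associated homomorphism lands in $C_S(Out(G))$. The final displayed formula is then just the special case $\alpha = 1$, read off through the same bijection.

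**First I would** check that the formula
\[ \phi \cdot (\alpha, [x_i, y_i]) = (\alpha \phi_i^{-1}, [x_{\sigma(i)}, y_{\sigma(i)}]) \]
respects composition, i.e. that $(\psi \circ \phi) \cdot (\alpha, [x_i, y_i]) = \psi \cdot (\phi \cdot (\alpha, [x_i, y_i]))$. The second coordinate causes no trouble, since $\sigma(\psi \circ \phi) = \sigma(\psi) \circ \sigma(\phi)$ is immediate from $\phi(K_i) = K_{\sigma(i)}$. The first coordinate is precisely where the composition formula $(\psi \circ \phi)_i = \psi_{\sigma(i)} \circ \phi_i$ (with $\sigma = \sigma(\phi)$), already recorded in the text, is needed: applying $\psi$ to the output of $\phi$ replaces $\alpha \phi_i^{-1}$ by $(\alpha \phi_i^{-1}) \psi_{\sigma(i)}^{-1} = \alpha (\psi_{\sigma(i)} \circ \phi_i)^{-1} = \alpha (\psi \circ \phi)_i^{-1}$, which matches. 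That the identity acts trivially is clear. To see the action factors through $Out(\gb)$, I would invoke the already-noted fact that when $\phi$ is inner, $\sigma(\phi)$ is the identity and each $\phi_i$ is inner, hence trivial in $Out(G)$; thus inner automorphisms act trivially, and the $Aut(\gb)$-action passes to the quotient $Out(\gb)$.

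**Next I would** verify commutation with $Out(G)$. On the product $Out(G) \times \pairs/Aut(G)$, the group $Out(G)$ acts by left multiplication on the first factor and trivially on the second. Since $\phi$ acts by \emph{right} multiplication by $\phi_i^{-1}$ on the first factor (and the index shift $i \mapsto \sigma(i)$ on the second does not interact with the first-factor multiplication), left and right multiplication commute associatively, so the two actions commute. Transporting everything to $\pc$ via the $Out(G)$-equivariant bijection of Lemma \ref{lem-pc-as-product} preserves this commutation, giving the desired homomorphism $Out(\gb) \to C_S(Out(G))$. Finally, to extract the concrete formula $\phi \cdot [x_i, y_i]_c = [\phi_i^{-1}(x_{\sigma(i)}), \phi_i^{-1}(y_{\sigma(i)})]_c$, I would trace the element $[x_i, y_i]_c$ (corresponding to $(1, [x_i, y_i])$) through the action and back through the bijection, using that the bijection sends $[\beta(x_j), \beta(y_j)]_c$ to $(\beta, [x_j, y_j])$; so the image $(\phi_i^{-1}, [x_{\sigma(i)}, y_{\sigma(i)}])$ pulls back to $[\phi_i^{-1}(x_{\sigma(i)}), \phi_i^{-1}(y_{\sigma(i)})]_c$.

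**The main obstacle** I expect is purely bookkeeping: keeping the direction of composition straight throughout. The paper fixes $\alpha\beta = \alpha \circ \beta$ and acts on the \emph{left}, yet the automorphisms $\phi_i$ enter by right multiplication with an inverse; the one genuine computation is confirming that the composition formula $(\psi\circ\phi)_i = \psi_{\sigma(i)} \circ \phi_i$ conspires with these inverses and the index shift to yield an honest left action rather than a right action or an anti-homomorphism. Everything else — triviality on inner automorphisms, commutation with $Out(G)$, and the read-off of the final formula — is formal once the well-definedness of the action is in hand.
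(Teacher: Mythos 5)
Your proposal is correct and follows essentially the same route as the paper, which defines the action on $Out(G)\times \pairs/Aut(G)$ by the same formula, declares it "clear" that it is an action factoring through $Out(\gb)$, observes commutation with $Out(G)$, and transports everything through the bijection of Lemma~\ref{lem-pc-as-product}. You have merely filled in the verification (via the composition formula $(\psi\circ\phi)_i = \psi_{\sigma(i)}\circ\phi_i$ and the triviality on inner automorphisms) that the paper leaves implicit, and your bookkeeping is accurate.
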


(Note that when~$\phi \in Out(\gb)$, or~$\phi \in Aut(\gb)$, we simply write~$\phi \cdot [g,h]_c$ for the action.)

\begin{rmk}
The specific choices we have made for the elements~$x_i$ and~$y_i$ actually matter here. The curious reader may prove the following. Using the material below on simple groups, one can at least establish that when~$G$ is simple, the permutation~$\sigma (\phi)$ and the automorphisms~$\phi_i$ are uniquely defined (once one has a numbering of the elements of~$\pairs/Aut(G)$), and so the action on~$Out(G) \times \pc$ can be defined without making choices. However even in this case, the bijection of lemma~\ref{lem-pc-as-product} depends on choices.
\end{rmk}

We need to identify the permutations of~$\pc$ induced by certain specific elements of~$Aut(\gb)$. We start with the automorphism~$\theta$ of~$\gb$ which exchanges~$x$ and~$y$. The next lemma is perhaps not surprising, but its proof requires some care. 

\begin{lem} \label{lem-theta-no-surprises}
For all~$g, h \in G$, we have 
\[ \theta \cdot [g, h]_c = [h, g]_c \, .   \]
\end{lem}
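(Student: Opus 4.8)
The plan is to pin down, for the specific automorphism $\theta$, the two ingredients of the action formula in Proposition~\ref{prop-map-to-centraliser}: the permutation $\sigma = \sigma(\theta) \in S_r$ and the attached automorphisms $\theta_i \in Aut(G)$. First I would determine $\sigma$. Since $\theta$ is the involution exchanging $x$ and $y$, the subgroup $\theta(K_i)$ is the kernel of $p_i \circ \theta$, and $(p_i \circ \theta)(x) = p_i(y) = y_i$ while $(p_i \circ \theta)(y) = p_i(x) = x_i$. Hence $\theta(K_i)$ is the index-$G$ subgroup of $\gb$ attached to the generating pair $(y_i, x_i)$, so that $\sigma(i)$ is the unique index with $[x_{\sigma(i)}, y_{\sigma(i)}] = [y_i, x_i]$. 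I would stress that this $\sigma(i)$ need not equal $i$, which is exactly where the delicacy lies.

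Next I would read off $\theta_i$ from its defining relation $p_{\sigma(i)} \circ \theta = \theta_i \circ p_i$. Evaluating both sides on $x$ gives $\theta_i(x_i) = p_{\sigma(i)}(y) = y_{\sigma(i)}$, and evaluating on $y$ gives $\theta_i(y_i) = p_{\sigma(i)}(x) = x_{\sigma(i)}$. Inverting, $\theta_i^{-1}(x_{\sigma(i)}) = y_i$ and $\theta_i^{-1}(y_{\sigma(i)}) = x_i$. Substituting these into the formula of Proposition~\ref{prop-map-to-centraliser} yields
\[ \theta \cdot [x_i, y_i]_c = [\theta_i^{-1}(x_{\sigma(i)}), \theta_i^{-1}(y_{\sigma(i)})]_c = [y_i, x_i]_c, \]
which is the desired identity for the chosen representatives $(x_i, y_i)$.

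Finally I would upgrade this to arbitrary $(g,h) \in \pairs$. Every class in $\pc$ can be written as $[\alpha(x_i), \alpha(y_i)]_c = \alpha \cdot [x_i, y_i]_c$ for some $\alpha \in Out(G)$ and some $i$. Because the action of $\theta$ commutes with that of $Out(G)$ (Proposition~\ref{prop-map-to-centraliser}), I compute
\[ \theta \cdot [\alpha(x_i), \alpha(y_i)]_c = \alpha \cdot (\theta \cdot [x_i, y_i]_c) = \alpha \cdot [y_i, x_i]_c = [\alpha(y_i), \alpha(x_i)]_c, \]
which is precisely $[h, g]_c$ once we set $[g,h]_c = [\alpha(x_i), \alpha(y_i)]_c$; here one checks en passant that $[g,h]_c \mapsto [h,g]_c$ is well defined on $\pc$, since conjugating both entries by an inner automorphism commutes with swapping them. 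The main obstacle is the first paragraph: one must resist the temptation to assume $\sigma(\theta) = \mathrm{id}$ or $\theta_i = \mathrm{id}$. In general the $Aut(G)$-orbit of $(y_i, x_i)$ differs from that of $(x_i, y_i)$, so $\theta$ genuinely permutes the $K_i$ and twists each factor by a nontrivial $\theta_i$; the content of the lemma is that these two effects cancel, leaving the naive answer $[h, g]_c$.
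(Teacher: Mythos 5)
Your proof is correct and follows essentially the same route as the paper's: compute $\theta_i(x_i)=y_{\sigma(i)}$ and $\theta_i(y_i)=x_{\sigma(i)}$ from the defining commutative square $p_{\sigma(i)}\circ\theta=\theta_i\circ p_i$, substitute into the action formula to get $\theta\cdot[x_i,y_i]_c=[y_i,x_i]_c$ on the chosen representatives, and then extend to all of $\pc$ by noting that both $[g,h]_c\mapsto\theta\cdot[g,h]_c$ and $[g,h]_c\mapsto[h,g]_c$ commute with the $Out(G)$-action. The extra discussion of how $\sigma(\theta)$ is determined is consistent with the paper but not strictly needed for the argument.
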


\begin{proof}
Consider the following commutative diagram: 
\[ \begin{CD}
\gb @>{\theta }>> \gb \\
@V{p_i}VV               @VV{p_{\sigma (i)}}V \\
G  @>{\theta_i }>> G
\end{CD}
  \]
Recall that~$\theta (x) = y$, $\theta (y)= x$, $p_i(x)=x_i$, $p_i(y)= y_i$, and likewise for~$p_{\sigma (i)}$. Thus we see that~$\theta_i(x_i) = y_{\sigma (i)}$ and~$\theta _i(y_i) = x_{\sigma (i)}$, which we may profitably rewrite as~$\theta_i^{-1}(y_{\sigma (i)}) = x_i$ and~$\theta_i^{-1}(x_{\sigma (i)}) = y_i$. 

Following the definitions, we see that 
\[ \theta \cdot [x_i, y_i]_c = [y_i, x_i]_c \, .   \]
Thus the proposed formula is true at least when~$[g, h]_c = [x_i, y_i]_c$ for some~$i$. However the map~$[g, h]_c \mapsto [h, g]_c$ commutes with the action of~$Out(G)$, as does~$[g, h]_c \mapsto \theta \cdot [g, h]_c$, so these two maps have to agree.
\end{proof}

In the exact same vein, we have 

\begin{lem} \label{lem-delta-no-surprises}
For all~$g, h \in G$, we have 
\[ \delta  \cdot [g, h]_c = [h^{-1} g^{-1}, h]_c \, .   \]
\end{lem}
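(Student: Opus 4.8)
The plan is to mirror the proof of Lemma~\ref{lem-theta-no-surprises} almost verbatim, replacing~$\theta $ by~$\delta $ throughout. Everything rests on two facts already in hand: the explicit formula $\phi \cdot [x_i, y_i]_c = [\phi_i^{-1}(x_{\sigma (i)}), \phi_i^{-1}(y_{\sigma (i)})]_c$ from Proposition~\ref{prop-map-to-centraliser}, and the fact that this action commutes with that of~$Out(G)$. Consequently it will be enough to verify the proposed formula on the base points~$[x_i, y_i]_c$, and then propagate it to all of~$\pc$ by equivariance.

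First I would write down the commutative diagram for~$\delta $, that is~$\delta_i \circ p_i = p_{\sigma (i)} \circ \delta $, and read off the effect of~$\delta_i$ on the distinguished generators. Using~$\delta (x) = y^{-1}x^{-1}$ and~$\delta (y) = y$ together with~$p_{\sigma (i)}(x) = x_{\sigma (i)}$ and~$p_{\sigma (i)}(y) = y_{\sigma (i)}$, one gets
\[ \delta_i(x_i) = y_{\sigma (i)}^{-1} x_{\sigma (i)}^{-1}, \qquad \delta_i(y_i) = y_{\sigma (i)} \, . \]
The one genuinely fiddly step is inverting these to compute~$\delta_i^{-1}$ on~$x_{\sigma (i)}$ and~$y_{\sigma (i)}$, as demanded by the action formula. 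From the second equation~$\delta_i^{-1}(y_{\sigma (i)}) = y_i$ at once; for the first I would solve~$\delta_i(x_i) = y_{\sigma (i)}^{-1} x_{\sigma (i)}^{-1}$ for~$x_{\sigma (i)}$ and then apply the homomorphism~$\delta_i^{-1}$, obtaining~$\delta_i^{-1}(x_{\sigma (i)}) = x_i^{-1} y_i^{-1}$. Substituting into Proposition~\ref{prop-map-to-centraliser} yields~$\delta \cdot [x_i, y_i]_c = [x_i^{-1} y_i^{-1}, y_i]_c$.

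This is precisely the proposed value~$[h^{-1}g^{-1}, h]_c$ with~$g = x_i$, $h = y_i$, up to a harmless discrepancy that I would flag explicitly, since it is exactly what could trip up a careless reader: the pairs~$(x_i^{-1} y_i^{-1}, y_i)$ and~$(y_i^{-1} x_i^{-1}, y_i)$ differ by simultaneous conjugation by~$y_i^{-1}$, which fixes the second coordinate, so~$[x_i^{-1} y_i^{-1}, y_i]_c = [y_i^{-1} x_i^{-1}, y_i]_c$. (It is pleasant to observe that the extra inverse in the action formula conspires with the shape of~$\delta $ to reproduce, on pairs, the same recipe $(g,h) \mapsto (h^{-1}g^{-1}, h)$ that~$\delta $ applies to the generators~$x, y$ of~$\gb$.)

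Having matched the formula on every base point~$[x_i, y_i]_c$, I would conclude as in Lemma~\ref{lem-theta-no-surprises}. Both maps~$[g,h]_c \mapsto \delta \cdot [g,h]_c$ and~$[g,h]_c \mapsto [h^{-1}g^{-1}, h]_c$ commute with the~$Out(G)$-action (the first by Proposition~\ref{prop-map-to-centraliser}, the second because~$\alpha (h^{-1}g^{-1}) = \alpha (h)^{-1}\alpha (g)^{-1}$), and every element of~$\pc$ is of the form~$\alpha \cdot [x_i, y_i]_c$; so two~$Out(G)$-equivariant maps that agree on the representatives~$[x_i, y_i]_c$ must agree on all of~$\pc$. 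The only real obstacle is bookkeeping: keeping the inverses and the order of the product straight in the inversion step and in the final simultaneous-conjugation check.
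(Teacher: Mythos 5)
Your proof is correct and follows exactly the route the paper intends: the paper explicitly leaves this lemma to the reader as being ``in the exact same vein'' as Lemma~\ref{lem-theta-no-surprises}, and you reproduce that argument, including the one nontrivial twist (that $[x_i^{-1}y_i^{-1}, y_i]_c = [y_i^{-1}x_i^{-1}, y_i]_c$ because the two pairs are simultaneously conjugate by a power of $y_i$, which fixes the second coordinate). The only quibble is notational: with the paper's convention $a^b = b^{-1}ab$, the conjugating element is $y_i$ rather than $y_i^{-1}$, but this does not affect the argument.
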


We leave the proof to the reader (recall that~$\delta (x)= y^{-1} x^{-1}$ and~$\delta (y) = y$). 

The next (and last) lemma involves the action of~$G \times G$ on~$\pc$ by conjugation.

\begin{lem} \label{lem-gtz-preserves-GxG}
Let~$\phi \in Aut(\gb)$ be such that~$\phi(x)$ is conjugate to~$x$, and~$\phi(y)$ is conjugate to~$y$. Then the action of~$\phi$ on~$\pc$ preserves the orbits of~$G \times G$.
\end{lem}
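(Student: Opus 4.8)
The plan is to show that for every $\xi \in \pc$ the element $\phi \cdot \xi$ lies in the same $G \times G$-orbit as $\xi$, where $G \times G$ acts by $(a,b) \cdot [g,h]_c = [aga^{-1}, bhb^{-1}]_c$. Since every element of $\pc$ has the form $[\alpha(x_i), \alpha(y_i)]_c$ for some $\alpha \in Out(G)$ and some $i$, and since the general formula of Proposition~\ref{prop-map-to-centraliser} reads
\[ \phi \cdot [\alpha(x_i), \alpha(y_i)]_c = [\alpha \phi_i^{-1}(x_{\sigma(i)}), \alpha \phi_i^{-1}(y_{\sigma(i)})]_c \, , \]
the whole statement reduces to controlling the two elements $\phi_i^{-1}(x_{\sigma(i)})$ and $\phi_i^{-1}(y_{\sigma(i)})$ up to conjugacy in $G$.

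The core step is to exploit the hypothesis, which is what lets us pass from $\gb$ to the factors $G$. I would write $\phi(x) = w x w^{-1}$ and $\phi(y) = u y u^{-1}$ for some $w, u \in \gb$. Recalling that $\phi_i$ is characterized by $p_{\sigma(i)} \circ \phi = \phi_i \circ p_i$, and applying $p_{\sigma(i)}$ to the first equation together with $p_i(x) = x_i$ and $p_{\sigma(i)}(x) = x_{\sigma(i)}$, I obtain
\[ \phi_i(x_i) = a_i \, x_{\sigma(i)} \, a_i^{-1}, \qquad a_i := p_{\sigma(i)}(w) \in G \, , \]
and likewise $\phi_i(y_i) = b_i \, y_{\sigma(i)} \, b_i^{-1}$ with $b_i := p_{\sigma(i)}(u)$. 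Applying $\phi_i^{-1}$ and rearranging then shows that $\phi_i^{-1}(x_{\sigma(i)})$ is a $G$-conjugate of $x_i$, namely $\phi_i^{-1}(x_{\sigma(i)}) = c_i x_i c_i^{-1}$ with $c_i = \phi_i^{-1}(a_i)^{-1}$, and similarly $\phi_i^{-1}(y_{\sigma(i)}) = d_i y_i d_i^{-1}$.

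Substituting into the action formula and pulling the (representative) automorphism $\alpha$ through the conjugations then yields
\[ \phi \cdot [\alpha(x_i), \alpha(y_i)]_c = \big[\, \alpha(c_i)\,\alpha(x_i)\,\alpha(c_i)^{-1}, \ \alpha(d_i)\,\alpha(y_i)\,\alpha(d_i)^{-1} \,\big]_c = (\alpha(c_i), \alpha(d_i)) \cdot [\alpha(x_i), \alpha(y_i)]_c \, , \]
which exhibits $\phi \cdot \xi$ as a $G \times G$-translate of $\xi$, as required. Alternatively, one can first treat only the representatives $\xi = [x_i,y_i]_c$, where the computation is identical with $\alpha = 1$, and then propagate to all of $\pc$ using that the $\phi$-action commutes with the $Out(G)$-action by Proposition~\ref{prop-map-to-centraliser}, together with the compatibility $\alpha \cdot ((a,b)\cdot\xi) = (\alpha(a), \alpha(b)) \cdot (\alpha \cdot \xi)$, which shows that $Out(G)$ carries $G \times G$-orbits to $G \times G$-orbits.

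The main obstacle is the bookkeeping in the core step. The hypothesis gives a single conjugacy inside $\gb$, and one must translate it factor by factor into a conjugacy in each copy of $G$ via $p_{\sigma(i)}$, taking care that the controlling relation is $p_{\sigma(i)} \circ \phi = \phi_i \circ p_i$, so that it is $\phi_i(x_i)$ and not $\phi_i(x_{\sigma(i)})$ that is pinned down; one then inverts $\phi_i$ to reach the elements $\phi_i^{-1}(x_{\sigma(i)})$ that actually occur in the action formula. Everything else is formal once these conjugators have been tracked correctly.
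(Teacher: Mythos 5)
Your proof is correct and follows essentially the same route as the paper: apply $p_{\sigma(i)}$ to the conjugacy relations $\phi(x)\sim x$, $\phi(y)\sim y$ to deduce that $\phi_i^{-1}(x_{\sigma(i)})$ is conjugate to $x_i$ and $\phi_i^{-1}(y_{\sigma(i)})$ to $y_i$, then invoke the action formula and the commutation with $Out(G)$ to cover all of $\pc$. The only cosmetic caveat is that, as in Remark~\ref{rmk-orbits}, the formula $(a,b)\cdot[g,h]_c=[aga^{-1},bhb^{-1}]_c$ is not literally an action on $\pc$; but your explicit conjugators give exactly the componentwise-conjugacy relation that the lemma is actually about.
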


\begin{rmk}[on our cavalier use of the word ``orbit'' here] \label{rmk-orbits} While~$G\times G$ acts on itself by conjugation, the action does not restrict to the subset~$\pairs$. As a result it does not make sense to speak of the orbits of~$G \times G$ on~$\pairs$, let alone~$\pc$. However it does make sense to ask whether two elements of~$\pairs$ lie in the same~$G \times G$-orbit (that is, orbit on~$G \times G$); it also makes sense to ask whether two elements of~$\pc$ are the images of two elements of~$\pairs$ in the same~$G \times G$-orbit: very explicitly~$[a_1, b_1]_c$ and~$[a_2, b_2]_c$ are thus related if~$a_1$ and~$a_2$ are conjugate and~$b_1$ and~$b_2$ are conjugate, a relation which is well-defined.

This is how the notion of an ``orbit'' should be interpreted in the lemma and in related statements that follow.

In \S\ref{subsec-example-psl} we give an example where the elements of~$\pc$ lying in the same~$G \times G$-''orbit'' are grouped together into blocks; one of these blocks is of size~$10$ while~$G$ has order~$168$, showing that the blocks are not actual orbits.
\end{rmk}

\begin{proof}[Proof of lemma~\ref{lem-gtz-preserves-GxG}]
The action of~$Out(G)$ on~$\pc$ preserves the~$G \times G$-orbits, clearly, so it suffices to show that~$\phi \cdot [x_i, y_i]_c$ is in the same~$G\times G$-orbit as~$[x_i, y_i]_c$ for each index~$i$.

By assumption~$\phi(x) = x^g$ so~$\phi_i(x_i) = p_{\sigma (i)}(\phi(x)) = x_{\sigma (i)}^{g'}$ where~$g' = p_{\sigma (i)}(g)$. It follows that~$\phi_i^{-1}(x_{\sigma (i)})$ is conjugate to~$x_i$. Likewise, $\phi_i(y_{\sigma (i)})$ is conjugate to~$y_i$, and in the end we have indeed shown that~$\phi_i^{-1} \cdot [x_{\sigma (i)}, y_{\sigma (i)}]_c$ is in the~$G \times G$-orbit of~$[x_i, y_i]_c$.
\end{proof}

\subsection{The group~$\s(G)$}

Let us use the notation~$\theta $ and~$\delta $ for the permutations induced on~$\pc$ by the automorphisms denoted by the same symbols in~$Aut(\gb)$. They generate a subgroup~$\langle \theta, \delta \rangle$ in~$S(\pc)$, the symmetric group of the set~$\pc$. One can check the identities~$\theta ^2 = 1$, $\delta^2= 1$, $\delta \theta \delta = \theta \delta \theta $, and it follows that~$\langle \theta , \delta \rangle$ is a homomorphic image of~$S_3$.

We define~$\s(G)$ to be the subgroup of~$S(\pc)$ of those permutations that commute with the action of~$Out(G) \times \langle \theta, \delta \rangle$, and preserve the~$G \times G$-orbits (bearing remark~\ref{rmk-orbits} in mind). Thus~$\s(G)$ is the intersection of the centralizer of a certain subgroup on the one hand, and a Young subgroup of~$S(\pc)$ on the other hand. (By ``Young subgroup'' we mean the product of symmetric groups associated to a partition of a set, here corresponding to the~$G \times G$-orbits.)

For any group~$G$, we have a map~$\gtz(G) \longrightarrow \s(G)$, by the lemmas just established. The rest of this section is dedicated to the proof of:

\begin{thm} \label{thm-main-simple}
When~$G$ is simple and non-abelian, the map 
\[ \gtz(G) \longrightarrow \s(G)  \]
is an isomorphism.
\end{thm}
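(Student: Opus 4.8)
The plan is to reduce everything to the single structural fact that, for $G$ simple and non-abelian, $\gb$ is the \emph{full} direct product $G^r$, and then to show that the homomorphism of Proposition~\ref{prop-map-to-centraliser} is already an isomorphism $Out(\gb) \xrightarrow{\ \sim\ } C_S(Out(G))$. Once this is known, the two subgroups $\gtz(G)$ and $\s(G)$ will be cut out by literally the same conditions on the two sides, and the theorem follows formally.

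First I would establish that $\gb = G^r$. Recall that $\gb$ is the subdirect product of $G^r$ generated by $x=(x_1,\ldots,x_r)$ and $y=(y_1,\ldots,y_r)$, and that the representatives $(x_i,y_i)$ lie in pairwise distinct $Aut(G)$-orbits. For $i\ne j$ the projection of $\gb$ to the coordinates $\{i,j\}$ is a subdirect product of $G\times G$, hence by Goursat's lemma it is either all of $G\times G$ or the graph of an isomorphism $\psi\colon G\to G$; the latter would force $(x_j,y_j)=(\psi(x_i),\psi(y_i))$, i.e.\ the two pairs would share an $Aut(G)$-orbit, contrary to our choice. So every pairwise projection is onto, and a short induction on $r$ (using that the normal subgroups of $G$ and of $G^{r-1}$ are well understood, again via Goursat) shows that a subdirect product of copies of a non-abelian simple group with all pairwise projections surjective must be the whole product. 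Thus $\gb=G^r$, whence $Aut(\gb)=Aut(G)\wr S_r$ and $Out(\gb)=Out(G)\wr S_r$, with the pair $(\sigma,(\phi_i))$ attached to $\phi$ being exactly its wreath-product coordinates.

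Next I would prove that the map $Out(\gb)\to C_S(Out(G))$ is an isomorphism. Using $\pc\cong Out(G)\times\pairs/Aut(G)$ together with the formula of Proposition~\ref{prop-map-to-centraliser}, one computes directly that $\phi$ carries the class corresponding to $(e,i)$ to the one corresponding to $(\phi_i^{-1},\sigma(i))$. Since a permutation of $\pc$ commuting with the free left $Out(G)$-action is determined by the images of the $(e,i)$, this exhibits the map explicitly and shows it is injective: its kernel consists of the $\phi$ with $\sigma$ trivial and every $\phi_i$ inner, which, now that $\gb=G^r$, are exactly the inner automorphisms of $\gb$ (this is where $\gb=G^r$ is essential, as otherwise $N_{G^r}(\gb)/\gb$ could obstruct injectivity). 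A count of orders, $|C_S(Out(G))|=r!\,|Out(G)|^r=|Out(G)\wr S_r|$, upgrades injectivity to an isomorphism. This isomorphism is $Out(G)$-equivariant and sends the classes of $\theta$ and $\delta$ in $Out(\gb)$ to the permutations $\theta,\delta$ of $\pc$ of Lemmas~\ref{lem-theta-no-surprises} and~\ref{lem-delta-no-surprises}; hence ``$\phi$ commutes with $\theta$ and $\delta$ in $Out(\gb)$'' matches ``the induced permutation commutes with $\theta$ and $\delta$'' exactly. It then remains to match the last defining condition of each side. On the $\gtz(G)$ side this is $\phi(x)\sim x$, which, since $\phi$ commutes with $\theta$, is equivalent to the conjunction $\phi(x)\sim x$ and $\phi(y)\sim y$. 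On the $\s(G)$ side it is preservation of the $G\times G$-orbits. Lemma~\ref{lem-gtz-preserves-GxG} gives one implication; for the converse I unwind, in $G^r$, that preservation of $G\times G$-orbits means $\phi_i^{-1}(x_{\sigma(i)})\sim x_i$ and $\phi_i^{-1}(y_{\sigma(i)})\sim y_i$ in $G$ for every $i$, which rearranges (conjugacy in $G^r$ being coordinate-wise) to precisely $\phi(x)\sim x$ and $\phi(y)\sim y$. Therefore the isomorphism $Out(\gb)\cong C_S(Out(G))$ restricts to a bijection $\gtz(G)\to\s(G)$, proving the theorem.

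I expect the main obstacle to be the first step, the identification $\gb=G^r$: it is what makes $Out(\gb)$ computable as a wreath product, and simultaneously forces the kernel of the centralizer map to be trivial. Everything after that is essentially bookkeeping with the wreath-product coordinates $(\sigma,(\phi_i))$ and the two explicit lemmas describing $\theta$ and $\delta$ on $\pc$.
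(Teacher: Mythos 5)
Your proposal is correct and follows essentially the same route as the paper: establish $\gb=G^r$, deduce $Out(\gb)\cong Out(G)\wr S_r\cong C_S(Out(G))$ by the kernel analysis plus an order count, and then match the defining conditions of $\gtz(G)$ and $\s(G)$ using Lemmas~\ref{lem-theta-no-surprises}, \ref{lem-delta-no-surprises} and~\ref{lem-gtz-preserves-GxG} together with the coordinate-wise description of conjugacy in $G^r$. The only difference is that you supply a Goursat-style proof of $\gb=G^r$ where the paper cites the literature.
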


Recall that we have a model of~$\gb$ as the subgroup of the cartesian product~$G^r$ generated by~$x = (x_1, x_2, \ldots, x_r)$ and~$y = (y_1, y_2, \ldots , y_r)$. The map~$p_i$ is then just the projection onto the~$i$-th factor.  In~\cite{gareth} one finds a proof of the following

\begin{lem}
Let~$G$ be a nonabelian, simple finite group. Then 
\begin{enumerate}
\item The group~$\gb$ is all of~$G^r$.
\item The normal subgroups of~$G^r$ are those of the form~$\prod_I G_i$ for some~$I \subset \{ 1, \ldots, r \}$ (where~$G_i$ is the~$i$-th embedded copy of~$G$ in~$G^r$).

\item As a result, the maximal, proper normal subgroups of~$G^r$ are those of the form~$\prod_{i \ne j} G_i $ for some~$j$. This is precisely~$K_j$.

\end{enumerate}
\end{lem}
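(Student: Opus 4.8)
The plan is to prove the three parts in the order (2), (1), (3), because the argument for (1) will invoke (2). Two features of $G$ drive everything: simplicity (every normal subgroup of $G$ is $1$ or $G$) and non-abelianness (so $Z(G) = 1$, which makes suitable commutators nontrivial). For (2), let $N \trianglelefteq G^r$ and put $I = \{ i : p_i(N) \neq 1 \}$, where $p_i$ is the $i$-th coordinate projection; simplicity gives $p_i(N) = G$ for $i \in I$, and clearly $N \subseteq \prod_{i \in I} G_i$, so the point is the reverse inclusion. Fixing $i \in I$ and $n \in N$ with $p_i(n) = g \neq 1$, and writing $\tilde h \in G_i$ for the element with $i$-th coordinate $h$, the commutator $[\tilde h, n]$ lies in $N$ (normality) and in $G_i$ (its coordinates away from $i$ vanish), with $i$-th coordinate $[h, g]$. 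Hence $p_i(N \cap G_i)$ is a normal subgroup of $G$ containing some nontrivial $[h, g]$ (as $Z(G) = 1$ and $g \neq 1$), so it equals $G$; thus $G_i \subseteq N$ for every $i \in I$ and $N = \prod_{i \in I} G_i$.

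For (1), note first that $H := \gb = \langle x, y \rangle$ is a subdirect product of $G^r$, since $p_i(H) = G$ because $(x_i, y_i)$ generates $G$. I would record the Goursat dichotomy for two simple factors: a subdirect subgroup of $G \times G$ is either all of $G \times G$ or the graph of an automorphism of $G$. Applying it to the image of $H$ in each $G_i \times G_j$ with $i \neq j$, a graph would supply an automorphism of $G$ sending $(x_i, y_i)$ to $(x_j, y_j)$, contradicting the choice of one representative per $Aut(G)$-orbit; so every pairwise projection of $H$ is onto $G \times G$.

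I then finish (1) by induction on $r$. Projecting onto the first $r - 1$ coordinates yields all of $G^{r-1}$ by the inductive hypothesis (the subdirect, pairwise-surjective property is inherited). Setting $N = H \cap G_r$, simplicity gives $p_r(N) \in \{ 1, G \}$. If $p_r(N) = G$ then $N = G_r$, so $G_r \subseteq H$, which together with surjectivity onto $G^{r-1}$ forces $H = G^r$. If instead $p_r(N) = 1$, the projection $H \to G^{r-1}$ is an isomorphism, so the last coordinate is a surjection $f \colon G^{r-1} \to G$; by (2) its kernel is $\prod_{i \neq k} G_i$ for some $k \le r - 1$, whence $f$ equals an automorphism of $G$ composed with the $k$-th projection, producing once more an automorphism carrying $(x_k, y_k)$ to $(x_r, y_r)$ and contradicting the orbit choice. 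This completes (1). Part (3) is then immediate: a normal subgroup $\prod_{i \in I} G_i$ is maximal proper exactly when the quotient $G^r / \prod_{i \in I} G_i \cong G^{r - |I|}$ is simple, i.e. when $|I| = r - 1$; taking $I = \{ 1, \ldots, r \} \setminus \{ j \}$ gives $\{ (g_1, \ldots, g_r) : g_j = 1 \}$, which is $\ker p_j = K_j$ under the identification $\gb = G^r$ from (1).

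I expect the crux to be the inductive step of (1), specifically excluding the ``diagonal'' case $p_r(N) = 1$. The difficulty is that $\gb$ is only a subgroup, not a normal subgroup, so the clean commutator argument of (2) does not apply to it directly; instead one must pass through the two-factor Goursat dichotomy and the classification of surjections $G^{r-1} \to G$ (which is where (2) re-enters) in order to turn each potential diagonal into an automorphism relating two of the chosen generating pairs, at which point the distinct-orbit hypothesis yields the contradiction.
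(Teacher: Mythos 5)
Your proof is correct and complete. Note that the paper does not actually prove this lemma: it is quoted with a pointer to the reference \cite{gareth}, so there is no in-paper argument to compare against. What you give is the classical treatment of subdirect products of a non-abelian simple group (essentially Philip Hall's argument): the commutator trick $[\tilde h, n] \in N \cap G_i$ together with $Z(G)=1$ and simplicity for part (2); the Goursat dichotomy on two factors, the hypothesis that the $(x_i,y_i)$ lie in distinct $Aut(G)$-orbits, and induction on $r$ for part (1) --- with part (2) correctly re-entering to show that any surjection $G^{r-1}\to G$ factors through a single coordinate, which is exactly what rules out the residual ``diagonal'' case $H\cap G_r = 1$; and part (3) falling out of the lattice description in (2). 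All the normality checks you need (e.g.\ that $p_i(N\cap G_i)$ is normal in $G$, and that $H\cap G_r$ is normal in $H$ so that $p_r(H\cap G_r)\trianglelefteq p_r(H)=G$) do go through. This is a sound, self-contained substitute for the citation.
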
 

In the rest of this section~$G$ will always be nonabelian and simple, as well as finite. Let us add :

\begin{lem} The automorphisms of~$G^r$ are as follows:
\begin{enumerate}
\item $Aut(G^r) \cong Aut(G) \wr S_r$.
\item $Out(G^r) \cong Out(G) \wr S_r$. 
\item The action of~$Out(\gb)$ on~$\pc$ is faithful.
\end{enumerate}
\end{lem}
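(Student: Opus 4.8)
The plan is to prove the three parts in order, leaning throughout on the preceding lemma, which identifies $\gb$ with $G^r$ and lists its normal subgroups.

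\emph{Part (1).} First I would note that the preceding lemma makes the $r$ embedded copies $G_1, \ldots, G_r$ the complete list of minimal (nontrivial) normal subgroups of $G^r$: every normal subgroup is $\prod_{i \in I} G_i$, and such a subgroup is minimal exactly when $|I| = 1$. Hence each $\phi \in Aut(G^r)$ permutes the $G_i$, giving a homomorphism $Aut(G^r) \to S_r$, which is split because $S_r$ acts on $G^r$ by permuting coordinates. The kernel consists of those $\phi$ fixing every $G_i$ setwise; restriction yields $\phi|_{G_i} = \alpha_i \in Aut(G_i) \cong Aut(G)$, and since $G^r$ is the internal direct product of the $G_i$, the automorphism $\phi$ is forced to be the coordinatewise product $(\alpha_1, \ldots, \alpha_r)$. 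So the kernel is $Aut(G)^r$, and $Aut(G^r) \cong Aut(G)^r \rtimes S_r = Aut(G) \wr S_r$.

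\emph{Part (2).} As $G$ is nonabelian simple, $Z(G) = 1$, so $Z(G^r) = 1$ and $Inn(G^r) \cong G^r$. Every inner automorphism fixes each $G_i$ setwise, hence lies in the kernel $Aut(G)^r$ above, where it is plainly the coordinatewise tuple of inner automorphisms; thus $Inn(G^r) = Inn(G)^r$. Since $S_r$ normalises $Inn(G)^r$ inside $Aut(G)^r \rtimes S_r$, passing to the quotient gives $Out(G^r) \cong (Aut(G)/Inn(G))^r \rtimes S_r = Out(G) \wr S_r$.

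\emph{Part (3).} Now $\gb = G^r$, and I would prove faithfulness by transporting the action along the bijection of Lemma~\ref{lem-pc-as-product}, where it reads $\phi \cdot (\alpha, [x_i, y_i]) = (\alpha \phi_i^{-1}, [x_{\sigma(i)}, y_{\sigma(i)}])$. Suppose $\phi \in Out(\gb)$ acts trivially. Triviality on the second factor forces $[x_{\sigma(i)}, y_{\sigma(i)}] = [x_i, y_i]$ for all $i$; as the representatives were chosen in distinct $Aut(G)$-orbits, this gives $\sigma = \mathrm{id}$. Triviality on the first factor says right translation by $\phi_i^{-1}$ is the identity of $Out(G)$, so $\phi_i^{-1} = 1$ in $Out(G)$, i.e. each $\phi_i$ is inner. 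It then remains to match $(\sigma, (\phi_i)_i)$ with the wreath-product coordinates of $\phi$ from Part (2): the permutation $\sigma(\phi)$ of the maximal normal subgroups $K_i = \prod_{j \ne i} G_j$ coincides with the permutation of the factors $G_i$, and $\phi_i$ is the corresponding component automorphism. Granting this, $\sigma = \mathrm{id}$ together with $\phi_i \in Inn(G)$ for all $i$ places $\phi$ in $Inn(G)^r = Inn(G^r)$, so $\phi$ is trivial in $Out(\gb)$. The steps in Parts (1) and (2) are classical; the point demanding real care is precisely this last identification of $\sigma(\phi)$ and the $\phi_i$ with the coordinates of Part (2), since it is this compatibility that lets the vanishing read off from the $\pc$-action be fed back into the structure theorem to conclude that $\phi$ is inner.
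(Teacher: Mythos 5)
Your proof is correct and follows essentially the same route as the paper: identify $Aut(G^r)$ as $Aut(G)\wr S_r$ via its permutation action on a canonical family of normal subgroups, pass to $Out$, and then deduce from a trivial action on $\pc$ that $\sigma(\phi)=\mathrm{id}$ and that each $\phi_i$ is inner, so $\phi$ is inner. The only cosmetic difference is that you track the minimal normal subgroups $G_i$ where the paper works with the maximal ones $K_j$ (via intersections and induction), and the compatibility of $\sigma(\phi)$ and the $\phi_i$ with the wreath-product coordinates, which you rightly flag as the delicate point, is exactly what the paper settles by observing that the notation of part (2) is consistent with the earlier definition of $\phi_i$.
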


\begin{proof}
Considering the action on the~$K_j$'s, we obtain a map~$Aut(G^r) \to S_r$ which is clearly split surjective. Now suppose~$\phi$ is an automorphism of~$G^r$ preserving all the~$K_j$'s. By taking intersections, we see that~$\phi$ preserves all the normal subgroups of~$G^r$, including~$G_1$ and~$K_1 \cong G^{r-1}$, these two satisfying~$G_1 \times K_1 = G^r$. By induction, it is immediate that~$\phi$ is of the form~$\alpha_1 \times \cdots \times \alpha_r$. This proves (1).

An inner automorphism of~$G^r$ is the direct product of inner automorphisms of each~$G_i$, so we have also (2).

Now suppose~$\phi \in Aut(\gb)$ acts trivially on~$\pc$. Then it must also act trivially on~$\pairs/Aut(G)$ (the map~$\pc \to \pairs/Aut(G)$ is equivariant for this action). It follows that~$\sigma (\phi)$ is the trivial permutation of~$S_r$. The map considered in (2) then sends~$\phi$ to~$(\phi_1, \phi_2, \ldots \phi_r) \in Out(G)^r$, in a notation which is consistent with our earlier use of~$\phi_i$. As the action on~$\pc$ is trivial, we see that~$\phi_i$ must be inner (that is, it represents the trivial element in~$Out(G)$), so (2) implies that~$\phi$ is itself inner. This concludes the proof of the lemma.
\end{proof}

If we combine (3) of the lemma with proposition~\ref{prop-map-to-centraliser}, we see that~$Out(\gb)$ injects into~$C_S(Out(G))$, the centralizer of~$Out(G)$ in~$S= S(\pc)$. However since the action of~$Out(G)$ on~$\pc$ is free with~$r$ orbits, its centralizer is itself a wreath product $Out(G) \wr S_r$. Comparing orders, we conclude that~$Out(\gb)$ maps isomorphically onto~$C_S(Out(G))$ via the action on~$\pc$. 

It remains to check that the conditions defining~$\gtz(G)$ as a subgroup of~$Out(\gb)$ correspond to what is stated in the theorem. This is immediate for the commutation with~$\theta $ and~$\delta $. Lemma~\ref{lem-gtz-preserves-GxG} does half the remaining work, by showing that the elements of~$\gtz(G)$ must preserve the~$G \times G$-orbits in~$\pc$. The proof will be concluded by establishing the converse.

Indeed, if the action of~$\phi \in Aut(\gb)$ is such that~$[\phi_i^{-1} (x_{\sigma (i)}), \phi_i^{-1}(y_{\sigma (i)})]_c$ is in the~$G\times G$-orbits of~$[x_i, y_i]_c$ for all~$1 \le i \le r$ then~$x_{\sigma (i)}$ and~$\phi_i(x_i)$ are conjugate; in other words~$p_{\sigma (i)}(x)$ and~$p_{\sigma (i)}( \phi(x))$ are conjugate. If we recall that~$\gb = G^r$ and each~$p_j$ is just the projection onto the~$j$-th factor, then we see immediately that~$x$ and~$\phi(x)$ are conjugate in~$G^r$, so in~$\gb$. Likewise for~$y$. This concludes the proof of theorem~\ref{thm-main-simple}.

\subsection{Properties of~$\s(G)$}

We write~$S = S(\pc)$ and~$H = Out(G) \times \langle \theta , \delta \rangle$, while~$Y$ is the Young subgroup of those permutations in~$S$ which preserve the~$G \times G$-orbits on~$\pc$. We have~$\s(G) = C_S(H) \cap Y$.

\begin{prop} \label{prop-prod-wreath-prods}
The group~$\s(G)$ is a product of wreath products~$E_k \wr S_{r_k}$ where~$\sum_k r_k = r$ and~$E_k$ is a subquotient of~$H$.

Moreover each integer~$r_k$ satisfies
\[ r_k \le \frac{|Z| m^2} {|G|}  \]
where~$m$ is the size of the largest conjugacy class in~$G$, and~$Z$ is the centre of~$G$.
\end{prop}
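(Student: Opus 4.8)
The plan is to realise $\s(G)$ explicitly inside a single wreath product and then read off its structure. Recall from the discussion preceding Theorem~\ref{thm-main-simple} that, because $Out(G)$ acts freely on $\pc$ with exactly $r$ orbits, its centraliser is $C_S(Out(G)) \cong Out(G) \wr S_r$; here the $r$ ``coordinates'' are the $Out(G)$-orbits, that is, the elements of $\pairs/Aut(G)$, and an element of this centraliser is a pair $(\sigma, (\phi_i)_i)$ with $\sigma \in S_r$ and $\phi_i \in Out(G)$. Since $\s(G) = C_S(Out(G)) \cap C_S(\langle \theta, \delta\rangle) \cap Y$, I would describe $\s(G)$ as the subgroup of those pairs which, in addition, commute with $\theta$ and $\delta$ and preserve the $G \times G$-orbits.

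For the structural statement I would decorate each coordinate $i$ with the data that any element of $\s(G)$ must respect: the $G \times G$-orbit colouring of the torsor $O_i = \{[\alpha(x_i), \alpha(y_i)]_c : \alpha \in Out(G)\}$ (equivalently the map sending $\alpha$ to the pair of conjugacy classes of $\alpha(x_i)$ and $\alpha(y_i)$), together with the way $\theta$ and $\delta$ tie $O_i$ to its partners $O_{\bar\theta(i)}$ and $O_{\bar\delta(i)}$. I declare two coordinates equivalent when their decorated torsors are isomorphic; this partitions $\{1,\dots,r\}$ into classes of sizes $r_1, r_2, \dots$ with $\sum_k r_k = r$. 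Within a class of size $r_k$ the permutations in $\s(G)$ supported there should form a wreath product $E_k \wr S_{r_k}$: the top group $S_{r_k}$ permutes the $r_k$ mutually isomorphic coordinates, while the base group $E_k$ is the group of self-isomorphisms of a single decorated torsor. Since such a self-isomorphism is an $Out(G)$-equivariant, colour- and $(\theta,\delta)$-compatible permutation of one $Out(G)$-torsor, $E_k$ is built from the stabiliser in $Out(G)$ of the colouring together with the relevant elements of $\langle \theta, \delta\rangle$; in particular it is a subquotient of $H = Out(G) \times \langle \theta, \delta\rangle$, as claimed. Taking the direct product over $k$ assembles these factors into $\s(G)$.

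The main obstacle is the interference of $\theta$ and $\delta$ with the wreath decomposition. Writing $\theta = (\bar\theta, (\theta_i)) \in Out(G)\wr S_r$, commuting with $\theta$ forces the component $\sigma$ to commute with the involution $\bar\theta \in S_r$, and likewise for $\bar\delta$; so it is not a priori clear that the realisable coordinate-permutations fill out full symmetric groups $S_{r_k}$ rather than some proper centraliser. The delicate point, which I would check carefully, is that the equivalence relation above is fine enough that $\bar\theta$ and $\bar\delta$ act trivially on every class of size at least $3$ (two coordinates genuinely interchanged by $\theta$, resp.\ $\delta$, carry different decorations unless $\theta$, resp.\ $\delta$, already acts on that coordinate through the base group alone, a situation harmless for classes of size $2$ since a transposition is central in $S_2$). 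Granting this, each top factor is the full $S_{r_k}$ and all the coupling between coordinates is absorbed into $E_k \le H$. This compatibility verification is where the bulk of the work lies.

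Finally, for the numerical bound I would argue directly on $\pc$. Fix a class $k$ and, using that its top factor is the full $S_{r_k}$, choose $\pi \in \s(G)$ inducing an $r_k$-cycle on the $r_k$ coordinates of that class and fixing the remaining coordinates. For any $p \in \pc$ lying in one of these coordinates, the points $p, \pi(p), \dots, \pi^{r_k-1}(p)$ lie in $r_k$ distinct coordinates, hence are distinct; and since $\pi \in Y$ preserves $G \times G$-orbits, they all lie in the single $G \times G$-orbit $\mathcal{O}$ of $p$, whence $r_k \le |\mathcal{O}|$. To bound $|\mathcal{O}|$, note that $\mathcal{O}$ is the image in $\pc = \pairs/Inn(G)$ of the set of generating pairs $(g,h)$ with $g$ and $h$ in two fixed conjugacy classes $C_1, C_2$; this set has at most $|C_1|\,|C_2| \le m^2$ elements, and $Inn(G) \cong G/Z$ acts on it freely (a stabiliser centralises $\langle g, h\rangle = G$ and so lies in $Z$), giving $|\mathcal{O}| \le m^2 |Z|/|G|$. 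Combining the two inequalities yields $r_k \le |Z|\, m^2/|G|$, as desired.
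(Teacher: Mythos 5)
Your overall strategy parallels the paper's (partition the index set into equivalence classes, exhibit a wreath product on each class, bound the class size by the size of a $G\times G$-``orbit'' in $\pc$), and your closing estimate $|\mathcal{O}|\le |Z|\,m^2/|G|$ is exactly the paper's computation, for the same reasons. But there is a genuine gap at the heart of the structural claim, and you have located it yourself without closing it. You take as your permuted units the $r$ individual $Out(G)$-orbits, so $\theta$ and $\delta$ move your coordinates through nontrivial permutations $\bar\theta,\bar\delta\in S_r$, and the top component $\sigma$ of any element of $\s(G)$ is forced to centralize them. Your proposed escape --- that $\bar\theta$ and $\bar\delta$ act trivially on every equivalence class of size at least $3$ --- is asserted, not proved, and the decoration you use to define the classes (``the way $\theta$ and $\delta$ tie $O_i$ to its partners'') is not pinned down precisely enough for that assertion even to be checkable; depending on how literally one reads it, it either fails to be an equivalence compatible with $\s(G)$ or splinters the classes. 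Without this step you only obtain, on each class, a group mapping to the centralizer of an involution in $S_{r_k}$, not to the full symmetric group; and your numerical bound also leans on the missing step, since producing an $r_k$-cycle (or even transitivity on the class) presupposes the wreath structure.

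The paper avoids the difficulty by choosing coarser units: the orbits of $H=Out(G)\times\langle\theta,\delta\rangle$ on $\pc$, each a union of $Out(G)$-orbits already closed under $\theta$ and $\delta$, grouped into blocks by the existence of $H$-equivariant bijections respecting the $G\times G$-colouring. Gluing such bijections $X_j\to X_{\tau(j)}$ over the orbits of a block yields an element of $\s(G)$ for an \emph{arbitrary} permutation $\tau$, because commutation with $\theta$ and $\delta$ is built into $H$-equivariance; hence the map onto $S_s$ is split surjective with kernel $E^s$, where $E$ sits inside $N_H(K)/K$ for $X_1\cong H/K$ and is therefore a subquotient of $H$. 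The bound is then obtained not by tracking a point under a long cycle but by observing that an $H$-equivariant bijection is determined by the image of a single point, which must remain in its $G\times G$-``orbit'', so at most $|Z|m^2/|G|$ orbits can be equivalent to a given one. If you want to rescue your coordinatewise picture, the correct move is to merge your coordinates into their $\langle\bar\theta,\bar\delta\rangle$-orbits before defining the equivalence --- which is in substance what the paper does.
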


Of course we will mostly use this proposition when~$G$ is simple and non-abelian, so that~$|Z|=1$. 

\begin{proof}
We number arbitrarily the ``orbits''~$P_1, P_2, \ldots $ of~$G \times G$ on~$\pc$. The use of quotes here refers to remark~\ref{rmk-orbits}. Other orbits in this proof are genuine.

Every orbit~$X$ of~$H$ has an ordered ``partition'' into the subsets~$X ^{(1)} =X \cap P_1, \, X ^{(2)} = X \cap P_2, \ldots $ some of which may be empty. Call two of these~$H$-orbits~$X_1$ and~$X_2$ {\em equivalent} when there is an~$H$-equivariant bijection~$ X_1 \to X_2$ mapping~$X_1 ^{(i)}$ onto~$X_2 ^{(i)}$ for each~$i$. Finally a {\em block} is a subset of~$\pc$ obtained as the union of the~$H$-orbits inside one equivalence class. 

The image of an~$H$-orbit under an element of~$\s(G)$ is another~$H$-orbit which is equivalent to the original one. It follows that~$\s(G)$ preserves the blocks, as does~$H$. Moreover this allows for a decomposition of~$\s(G)$ as a direct product of groups, one for each block: namely, if~$B$ is a block, define~$\s(G)_B = C_{S(B)}(H) \cap Y_B$ where~$Y_B$ is the Young subgroup corresponding to the partition of~$B$ by the subsets~$B \cap P_i$; then~$\s(G)$ is the direct product of the various groups~$\s(G)_B$.

Suppose that the~$H$-orbits in the block~$B$ are~$X_1, \ldots , X_s$. These are permuted by~$\s(G)$, or~$\s(G)_B$, yielding a homomorphism~$\s(G)_B \to S_s$ which is easily seen to be split surjective. The kernel of this homomorphism is a direct product of~$s$ copies of the group~$E$ of self-equivalences of~$X_1$ (in the above sense). The latter is a subgroup of the automorphism group of~$X_1$ as an~$H$-set; if~$X_1 \cong H/K$ for some subgroup~$K$ of~$H$, then this automorphism group is~$N_H(K)/K$. This completes the description of~$\s(G)_B$ as a wreath product~$E \wr S_s$ where~$E$ is a subquotient of~$H$.

There remains to prove the bound on~$r_k$. If~$X_1$ is an~$H$-orbit, then an~$H$-equivariant bijection~$X_1 \to X_2$ is entirely determined by the image of a single point~$p \in X_1$; if this bijection is to afford an equivalence between~$X_1$ and~$X_2$, then this image must be taken in the~$G\times G$-''orbit'' of~$p$. As a result, there are no more orbits equivalent to~$X_1$ than elements in the largest~$G \times G$-''orbit''. A~$G \times G$-''orbit'' on~$\pairs$ has size~$\le m^2$; on the other hand as~$G/Z$ acts freely on~$\pairs$, the fibres of the map~$\pairs \to \pc$ have size~$|G|/|Z|$. Thus we see that a~$G\times G$-''orbit'' on~$\pc$ has size~$\le |Z| m^2 / |G|$.
\end{proof}

Keeping in mind that~$\langle \theta, \delta \rangle$ is a homomorphic image of~$S_3$, we draw:

\begin{coro}
A simple factor occuring in~$\s(G)$ must be isomorphic to either: \begin{itemize}
\item $C_2$,
\item $C_3$,
\item a subquotient of~$Out(G)$,
\item an alternating group~$A_s$ where~$s \le \frac{|Z| m^2} {|G|}$.
\end{itemize}
\end{coro}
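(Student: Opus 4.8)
The plan is to read the simple composition factors of $\s(G)$ directly off the structural description furnished by Proposition~\ref{prop-prod-wreath-prods}, since ``a simple factor occurring in'' a finite group means precisely a composition factor, and composition factors are additive across the building blocks of a group. By that proposition we may write $\s(G)$ as a product of wreath products $E_k \wr S_{r_k}$ with $\sum_k r_k = r$, each $E_k$ a subquotient of $H = Out(G) \times \langle \theta, \delta \rangle$, and each $r_k \le |Z| m^2 / |G|$. The composition factors of a direct product are the union (with multiplicities) of the composition factors of its factors, so it suffices to determine the composition factors of a single wreath product $E \wr S_r$ with $E$ a subquotient of $H$ and $r \le |Z| m^2/|G|$.

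First I would recall that $E \wr S_r = E^r \rtimes S_r$ sits in a short exact sequence with kernel the base group $E^r$ and quotient $S_r$. Hence its composition factors are exactly those of $E$ (each occurring with multiplicity $r$, one copy for each of the $r$ factors in $E^r$) together with those of $S_r$. This splits the problem into two independent computations: the simple factors of $E$, and the simple factors of $S_r$.

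For $E$, since it is a subquotient of $H$, every composition factor of $E$ is a composition factor of $H$; and as $H = Out(G) \times \langle \theta, \delta \rangle$ is a direct product, its composition factors are those of $Out(G)$ together with those of $\langle \theta, \delta \rangle$. The latter is a homomorphic image of $S_3$, as noted just before the corollary, so its composition factors lie in $\{C_2, C_3\}$. A simple composition factor of $E$ is therefore either a simple subquotient of $Out(G)$, or $C_2$, or $C_3$, which accounts for the first three items on the list. For $S_r$ I would invoke the elementary classification of the composition factors of the symmetric group: when $r \ge 5$ they are $C_2$ (from the sign map) and the simple group $A_r$, whereas when $r \le 4$ they all lie in $\{C_2, C_3\}$. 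Thus the only genuinely new simple factor contributed by $S_r$ is $A_r$ with $r \ge 5$, and since $r = r_k \le |Z| m^2 / |G|$ this is an $A_s$ with $s \le |Z| m^2/|G|$, the fourth item. Assembling the two lists yields the corollary.

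The only step requiring some care — though entirely routine — is the bookkeeping of the composition factors of $S_r$ for small $r$: one must check that no unexpected alternating factor $A_s$ with $s < 5$ creeps in, and that the identifications $A_3 = C_3$ and the non-simplicity of $A_4$ are correctly absorbed into the $C_2$/$C_3$ items (for instance $S_4$ has composition factors $C_2$, $C_2$, $C_2$, $C_3$) rather than producing a spurious alternating simple factor. Everything else is a direct transcription of Proposition~\ref{prop-prod-wreath-prods}.
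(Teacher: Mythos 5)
Your proposal is correct and is exactly the argument the paper intends: the corollary is drawn directly from Proposition~\ref{prop-prod-wreath-prods} together with the remark that $\langle\theta,\delta\rangle$ is a quotient of $S_3$, and you have merely filled in the routine bookkeeping of composition factors of $E_k\wr S_{r_k}$ that the paper leaves implicit. No discrepancy to report.
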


Combining this with theorem~\ref{thm-main-simple} yields a description of the possible simple factors in~$\gtz(G)$ when~$G$ is non-abelian and simple (namely, those in the corollary). Also using that~$\gt(G) / \gtz(G)$ is abelian (lemma~\ref{lem-gtz-abelian}), we draw:

\begin{coro}
Let~$G$ be non-abelian and simple. Then a simple factor occuring in~$\gt(G)$ must be isomorphic to either: \begin{itemize}
\item a cyclic group,
\item a subquotient of~$Out(G)$,
\item an alternating group~$A_s$ where~$s \le \frac{m^2} {|G|}$.
\end{itemize}

\end{coro}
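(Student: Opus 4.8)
The plan is to deduce this corollary from the previous one (which describes the simple factors of $\s(G)$, hence of $\gtz(G)$ by theorem~\ref{thm-main-simple}) together with the structural fact, recorded in lemma~\ref{lem-gtz-abelian}, that the quotient $\gt(G)/\gtz(G)$ is abelian. The guiding principle is that a short exact sequence
\[ 1 \longrightarrow \gtz(G) \longrightarrow \gt(G) \longrightarrow \gt(G)/\gtz(G) \longrightarrow 1 \]
cannot manufacture new nonabelian simple composition factors: every simple factor of the middle term must already appear either in the normal subgroup $\gtz(G)$ or in the abelian quotient.

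First I would make precise what ``simple factor'' means here --- presumably a composition factor, i.e. a simple group appearing in a composition series. For a finite group the multiset of composition factors is additive along short exact sequences (the Jordan--Hölder theorem applied to a composition series refining $1 \trianglelefteq \gtz(G) \trianglelefteq \gt(G)$). Hence the composition factors of $\gt(G)$ are exactly those of $\gtz(G)$ together with those of $\gt(G)/\gtz(G)$. Since $\gt(G)/\gtz(G)$ is abelian, all of \emph{its} composition factors are cyclic of prime order. The composition factors of $\gtz(G)$, by the previous corollary combined with theorem~\ref{thm-main-simple}, lie among $C_2$, $C_3$, subquotients of $Out(G)$, and alternating groups $A_s$ with $s \le m^2/|G|$ (using $|Z|=1$ since $G$ is simple non-abelian).

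To assemble the stated list I would then observe that $C_2$ and $C_3$ are themselves cyclic, so they are absorbed into the ``cyclic group'' bullet, which also covers every prime-order factor coming from the abelian quotient. A subquotient of $Out(G)$ that happens to be simple is exactly a ``simple factor of $Out(G)$'', so it is recorded by the corresponding bullet (note that a composition factor of a subquotient of $Out(G)$ is itself a composition factor of $Out(G)$, so ``subquotient of $Out(G)$'' is the correct and stable phrasing). Finally the alternating groups $A_s$ with $s \le m^2/|G|$ carry over verbatim. Taking the union of these possibilities yields precisely the three bullets in the statement: a cyclic group, a subquotient of $Out(G)$, or an alternating group $A_s$ with $s \le m^2/|G|$.

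The only point demanding genuine care --- the ``hard part'', though it is more a matter of bookkeeping than of difficulty --- is the clean transition from ``subquotient'' to ``simple factor'' and the verification that no interaction between $\gtz(G)$ and the abelian quotient can produce an alternating factor larger than the bound or a nonabelian factor not already present. This is handled entirely by the additivity of composition factors: the extension contributes nothing beyond the factors of its two ends, so the bound $s \le m^2/|G|$ and the restriction to $Out(G)$-subquotients are preserved without modification. I would phrase the argument so that it depends only on Jordan--Hölder and the two cited results, keeping it a two-line deduction in the final text.
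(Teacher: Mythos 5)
Your argument is exactly the paper's: the paper derives this corollary by combining the preceding corollary on $\s(G)$ with theorem~\ref{thm-main-simple} to control the factors of $\gtz(G)$, and then invokes lemma~\ref{lem-gtz-abelian} so that the abelian quotient $\gt(G)/\gtz(G)$ contributes only cyclic factors. Your more explicit Jordan--Hölder bookkeeping is just a spelled-out version of the same two-line deduction, and it is correct.
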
 

Note that the classification of finite simple groups implies by inspection that the group~$Out(G)$ is always solvable, so if one accepts this result then we conclude that the list of simple factors reduces to cyclic and alternating groups.

In any case, we can consider those non-abelian simple groups such that~$Out(G)$ has order~$1$,~$2$ or~$4$: this includes the alternating groups for~$n \ge 5$, almost all Chevalley groups over fields of prime order, and all 26 sporadic groups. The results above show that~$\gtz(G)$ can only have, as simple factors, the groups~$C_2$ and~$C_3$ as well as some alternating groups. In~$\gt(G)$ one may encounter further cyclic groups.

\subsection{A complete example} \label{subsec-example-psl}

Take~$G = PSL_3(\f_2)$, a simple group of order~$168$ with~$Out(G) = C_2 = \langle \alpha \rangle$. The following information is obtained with the help of GAP.

There are~$114$ elements in~$\pc$; in the following some arbitrary numbering is used for them. Looking at the action of~$G \times G$ we obtain the following partition of~$\{ 1, \ldots, 114 \}$ :
\begin{multline*}
\{ 1, 10, 27, 28, 96, 106 \}, \{ 2, 11, 52, 57, 82, 86 \}, \{ 3, 12 \}, \{ 62, 63 \}, \{ 107, 109 \}\\ \{ 4, 13, 29, 49, 51, 54, 56, 70, 101, 102 \}, \{ 5, 6, 31, 32, 71, 72 \}, \{ 7, 34, 75 \}, \{ 8, 84, 97 \}, \\\{ 9, 36, 53, 73, 83, 85 \}, \{ 14, 16, 38, 60, 91, 114 \}, \{ 15, 61, 76 \}, \{ 17, 98, 110 \},\\ \{ 18, 40, 58, 59, 78, 99 \}, \{ 19, 21 \}, \{ 20, 41, 43 \}, \{ 22, 42, 64 \}, \{ 39, 77, 90 \},\\\{ 23, 25, 44, 48, 67, 79, 87, 103, 108, 111 \}, \{ 24, 45, 68, 94, 105, 112 \}, \{ 33, 35, 74 \},\\ \{ 26, 46, 69, 89, 93, 104 \}, \{ 30, 37, 50, 55, 100, 113 \}, \{ 47, 65, 66, 80, 81, 88, 92, 95 \}.
\end{multline*}
There are 4 subsets of size 2; 8 of size 3; 9 of size 6; one of size 8 and two of size 10. Thus~$Y \cong C_2^4 \times S_3^8 \times S_6^9 \times S_8 \times S_{10}^2$.

Then we compute \begin{itemize}

\item the permutation induced by~$\alpha $ : (1,11) (2,10) (3,12) (4,101) (5,16) (6,14) (7,15) (8,17) (9,18) (13,102) (19,21) (20, 22) (23,48) (24,26) (25,103) (27,52) (28,57) (29,70) (30,100) (31,60) (32,38) (33, 77) (34,61) (35,39) (36,59) (37,113) (40,85) (41,42) (43,64) (44,87) (45,104) (46, 94) (47,88) (49,56) (50,55) (51,54) (53,58) (62,63) (65,80) (66,95) (67,111) (68, 93) (69,105) (71,114) (72,91) (73,78) (74,90) (75,76) (79,108) (81,92) (82,96) (83, 99) (84,110) (86,106) (89,112) (97,98) (107,109).

\item the permutation induced by~$\theta $ : (1,5) (2,14) (3,19) (4,23) (6,10) (7,15) (8,20) (9,24) (11,16) (12,21) (13,25) (17, 22) (18,26) (27,31) (28,72) (29,67) (30,100) (32,96) (34,61) (36,68) (38,82) (40, 69) (41,97) (42,98) (43,84) (44,49) (45,53) (46,99) (47,88) (48,101) (50,55) (51, 79) (52,60) (54,108) (56,87) (57,91) (58,104) (59,93) (62,107) (63,109) (64,110) (66, 95) (70,111) (71,106) (73,112) (75,76) (78,89) (81,92) (83,94) (85,105) (86,114) (102, 103).

\item the permutation induced by~$\delta $ : (2,3) (4,106) (5,35) (6,8) (7,85) (10,12) (13,100) (14,17) (15,40) (16,39) (19,22) (20,21) (23,68) (24,65) (25,103) (26,80) (27,52) (28,70) (29,57) (30,102) (31,75) (34, 73) (36,71) (37,96) (41,63) (42,62) (44,87) (45,111) (46,107) (47,88) (48,93) (49, 55) (50,56) (51,54) (53,72) (58,91) (59,114) (60,76) (61,78) (66,108) (67,104) (69, 105) (74,84) (79,95) (81,92) (82,113) (83,97) (86,101) (89,112) (90,110) (94,109) (98, 99).

\end{itemize}

We can then ask GAP to compute~$\gtz(G)$ as the intersection of~$Y$ and the centralizer of the three permutations above. We find that~$\gtz(G)$ has order 512; its centre~$Z$ is elementary abelian of order~$32$; and~$\gtz(G)/Z$ is elementary abelian of order~$16$. In fact, finer use of GAP as described below allows us to improve this very last step of the computation, showing that~$\gtz(G) \cong C_2^3 \times D_8^2$, where~$D_8$ is the dihedral group of order~$8$.

\section{Computing explicitly} \label{sec-gap}

In this Section we provide details on the use of the computer algebra system GAP in order to apply our results about~$\s(G)$. No doubt many readers who are not computer enclined will wish to skip most of this, and we encourage them to browse the results themselves in~\S\ref{subsec-simple} and~\S\ref{subsec-p-groups}.

We have chosen to give the explanation in a mathematical discourse interspersed with GAP commands. We feel that the readers having little familiarity with computational group theory will be able to understand what follows, while an opportinity is given to get a sense of ``what is feasible with just one command'' (and by contrast, what requires more effort). On the other hand, we find it useful to indicate some relevant GAP commands to those readers who will wish to implement their own calculations. 

\subsection{Computing~$\gt_{1}(G)$}

The first task is to construct~$\gb$ given~$G$, and it is straightforward. We provide some details solely with the purpose of indicating some GAP functions to the reader. One builds the automorphism group of~$G$ using \gap{AutomorphismGroup(G)}, then converts its generators into automorphisms of~$G \times G$, see \gap{DirectProduct(G, G)} and also the function \gap{GroupHomomorphismByImages}.

Having thus constructed the group \gap{A} of these automorphisms of~$G \times G$, one appeals to \gap{OrbitsDomain(A, GG)} (where \gap{GG} is~$G \times G$) to find the orbits. For each orbit \gap{orb}, pick a representative \gap{orb[1]} (which is an element of~$G \times G$), extract the two elements \gap{x} and \gap{y} comprising the pair, and check whether we have the equality \gap{Subgroup(G, [x, y]) = G}. If not, discard the orbit.

Picking representatives in the~$r$ remaining orbits, one constructs~$\gb$ as a subgroup of~$G^r$, using \gap{DirectProduct} again.

As for~$\gtz(G)$, we will rely on the next lemma.

\begin{lem}
Let~$C_x$ and~$C_y$ be the centralizers of the canonical generators~$x$ and~$y$ of~$\gb$, respectively. To each element~$\phi \in \gtz(G)$ we may associate a unique double coset~$D \in C_x \bs \gb / C_y$. In fact, if~$\phi$ is induced by the automorphism~$\tilde \phi$ of~$\gb$ satisfying~$x \mapsto x^f$ and~$y \mapsto y$, then~$D$ is the double coset of~$f$.

Moreover, in the same notation, we have~$1 \in C_x \, f \theta (f) \, C_y^{\theta (f)}$. Conversely if~$\tilde \phi \in Aut(\gb)$ satisfies~$x \mapsto x^f$ and~$y \mapsto y$ for some~$f$ such that~$1 \in C_x \, f \theta (f) \, C_y^{\theta (f)}$, then the induced~$\phi \in Out(\gb)$ commutes with~$\theta $.
\end{lem}

\begin{proof}
By definition~$\phi$ can be induced by such an automorphism. If~$f_1$ and~$f_2$ are both possible choices for~$f$, yielding~$\tilde \phi_1$ and~$\tilde \phi_2$ both inducing~$\phi$, then we see that~$\tilde \phi_2 = c_t \circ \tilde \phi_1$ where~$c_t$ is conjugation by~$t$. It follows that~$y^t = y$ so~$t \in C_y$, and that $x^{f_2} = x^{f_1 t}$ so~$s:= f_1 t f_2^{-1} \in C_x$. In the end~$f_1 = s f_2 t^{-1}$ so~$f_1$ and~$f_2$ are in the same double coset. The converse is obvious.

We turn to the last statement, which follows from the fact that~$\tilde \phi \circ \theta $ and~$\theta \circ \tilde \phi$ must differ by an inner automorphism, by definition of~$\gtz(G)$. So there must exist an element~$t$ such that~$y^t = y^{\theta (f)}$ and~$x^{ft} = x$. We see that~$s = ft \in C_x$ and~$u = t \theta (f)^{-1} \in C_y$, so that~$1 = s^{-1} f u \theta (f) \in C_x f C_y \theta (f) =  C_x \, f \theta (f) \, C_y^{\theta (f)}$. Again the converse is left to the reader.
\end{proof}

This suggest the following method to compute~$\gtz(G)$. First, compute the centralizers \gap{Cx:= Centralizer(GB, x)} and \gap{Cy:= Centralizer(GB, y)}, where \gap{GB} is~$\gb$. Then compute \gap{DoubleCosets(GB, Cx, Cy)} (which is an optimized process in GAP). Now filter the double cosets, by excluding~$C_x f C_y$ if the unit of~$\gb$ is not in \gap{DoubleCoset(Cx, f*fth, Cy\^{}fth)}, where \gap{fth} is~$\theta(f)$. Also exclude~$f$ if~$x^f$ and~$y$ generate a proper subgroup of~$\gb$.

The next step is to go through the remaining double cosets, and with each representative~$f$ define~$\tilde \phi$ with \gap{GroupHomomorphismByImages(GB, GB, [x, y], [x\^{}f, y])} (this is automatically well-defined by (3) of lemma~\ref{lem-basic-props-gb}; however the construction of~$\tilde \phi$ by GAP is surprisingly time-consuming, which is the reason for filtering out as many candidates for~$f$ as possible before reaching this stage).

Finally, keep only those homomorphisms commuting with~$\delta $ in~$Out(\gb)$, which may be checked with \gap{IsInnerAutomorphism(phi*delta*phi\^{}(-1)*delta\^{}(-1))}.

At this point, we have a list of automorphisms of~$\gb$ representing the elements of~$\gtz(G)$ with no repetition; the number of these automorphisms is the order of~$\gtz(G)$. Finding the group structure of~$\gtz(G)$ can be achieved, rather slowly, with the help of the commands \begin{verbatim}
A:= AutomorphismGroup(GB);
int:= InnerAutomorphismsAutomorphismGroup(A);
quo:= NaturalHomomorphismByNormalSubgroup(A, int);
\end{verbatim}

One can then create the list of all the elements \gap{quo(phi)} where \gap{phi} is taken from our list of automorphisms, and ask GAP to describe the group they generate.

\subsection{Computing~$\s(G)$}

This is several orders of magnitude faster than computing~$\gtz(G)$. 

The first step is to construct~$\pc$. For this, one builds~$G \times G$ and the embedded diagonal copy of~$G$ in~$G \times G$, and one appeals to \gap{OrbitsDomain(diagG, GG)}. As above, one filters out an orbit if a representative pair~$(x, y)$ fails to generate all of~$G$. We obtain~$\pc$ as a GAP list, say \gap{pairsconj}. Its length is~$\ell = r \, |Out(G)|$.

Then one must build the orbits of~$G \times G$ on~$\pc$, say stored as a list of lists of indices, those indices refering to \gap{pairsconj}. It is impossible to rely on \gap{OrbitsDomain} for the reasons given in remark~\ref{rmk-orbits}, since we are not dealing with genuine orbits ; instead, for each \gap{pairorbit} taken in \gap{pairsconj}, we take the representative \gap{pair:= pairorbit[1]}, then get its \gap{C:= ConjugacyClass(GG, pair)} where \gap{GG} is~$G \times G$. Then we run through all the indices, and those corresponding to elements of~$\pc$ which happen to be in \gap{C} we group together. After this has been done for each \gap{pairorbit}, we have a list of lists of indices partitioning the set~$\{ 1, \ldots, \ell \}$; for example in \S\ref{subsec-example-psl} we gave the corresponding partition of~$\{ 1 \ldots 114 \}$ when~$G = PSL_3(\f_2)$. The corresponding Young subgroup \gap{Y} may be constructed easily, but we will argue that it is not the best way to go.

Before turning to this though, we mention that we must construct two further permutations of~$\{ 1, \ldots, \ell \}$ corresponding to~$\theta $ and~$\delta $, and for this we follow lemma~\ref{lem-theta-no-surprises} and lemma~\ref{lem-delta-no-surprises}. We then do the same for each generator of~$Out(G)$ which is not inner, and compute the corresponding permutations. We let GAP know that we call \gap{H} the subgroup of~$S_\ell$ generated by all these elements.  

It is possible at this stage to ask GAP to compute \begin{verbatim}
Intersection( Centralizer(SymmetricGroup(ell), H), Y);
\end{verbatim}
but except in very small examples, the calculation will simply take too long (and will likely exhaust the available memory). 

Instead, we partially implement the ideas of proposition~\ref{prop-prod-wreath-prods} and its proof. Let us use the notation introduced in that proof. First we compute the orbits of~$H$ with \gap{OrbitsDomain(H, [1..ell])}. Then we group these orbits according to a looser equivalence relation than the one used in the proof of~\ref{prop-prod-wreath-prods}: we call two orbits~$X_1$  and~$X_2$ equivalent if (i) they are isomorphic as~$H$-sets, which in practice is checked by verifying whether the corresponding stabilizers are conjugate in~$H$, {\em cf} \gap{Stabilizer(H, orbit[1])} and \gap{ConjugacyClassesSubgroups(H)} which together allow to associate to each orbit the position of the conjugacy class of the stabilizer in some numbering; and (ii) the {\em cardinality} of~$X_1 \cap P_i$ is equal to the cardinality of~$X_2 \cap P_i$, for each index~$i$. The union of the orbits in one equivalence class we call a {\em packet}, and each packet is a union of the ``blocks'' defined in the aforementioned proof.

Now each packet is~$H$-invariant, and~$\s(G)$ splits as a direct product corresponding to the packets, which we see by arguing as we did in the proof of~\ref{prop-prod-wreath-prods} with blocks. We can compute the image~$H'$ of~$H$ in the symmetric group of each packet by applying \gap{RestrictedPermNC(g, packet)} to each generator \gap{g} of~$H$. Also, the Young subgroup~$Y'$ corresponding to the intersections of the~$P_i$'s with the packet is readily created in GAP. 

It is now possible to ask directly for the computation of the centralizer of~$H'$, and its intersection with~$Y'$. The product of all the resulting groups, for all packets, is~$\s(G)$. For each factor in this product, we can prompt GAP for the composition series, see for example \gap{DisplayCompositionSeries(factor)}. (Finer information, such as \gap{StructureDescription(factor)}, can still take a very long time).

\subsection{Simple groups of small order} \label{subsec-simple}

We shall give information on~$\gtz(G)$ for~$12$ simple groups of small size.
We start with the~$PSL_2$ family (recall that~$PSL_2(\f_4) \cong PSL_2(\f_5) \cong A_5$, $PSL_2(\f_7) \cong PSL_3(\f_2)$ and $PSL_2(\f_9) \cong A_6$). Write~$D_8$ for the dihedral group of order~$8$.

\begin{thm} \label{thm-conj-psl2q} We have:

\begin{itemize}
\item (Order 60) $\gtz(PSL_2(\f_4))$ is trivial.
\item (Order 168) $\gtz(PSL_2(\f_7)) \cong C_2^3 \times D_8^2$.
\item (Order 360) $\gtz(PSL_2(\f_9)) \cong C_2^{12} \times D_8$.
\item (Order 504) $\gtz(PSL_2(\f_8))$ is trivial.
\item (Order 660) $\gtz(PSL_2(\f_{11})) \cong C_2^{27} \times D_8^7$.
\item (Order 1092) $\gtz(PSL_2(\f_{13})) \cong C_2^{54} \times D_8^{17}$.
\item (Order 2448) $\gtz(PSL_2(\f_{17})) \cong C_2^{104} \times D_8^{50}$.
\item (Order 3420) $\gtz(PSL_2(\f_{19})) \cong C_2^{133} \times D_8^{74}$.
\item (Order 4080) $\gtz(PSL_2(\f_{16}))$ is trivial.
\end{itemize}
\end{thm}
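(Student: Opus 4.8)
The plan is to treat the whole theorem as a direct application of Theorem~\ref{thm-main-simple}, which converts each assertion into a computation with the far more manageable group~$\s(G)$. Every group~$PSL_2(\f_q)$ in the list is finite, simple and non-abelian, so Theorem~\ref{thm-main-simple} yields isomorphisms~$\gtz(PSL_2(\f_q)) \cong \s(PSL_2(\f_q))$, and it suffices to identify the right-hand side in each of the nine cases. Since $\s(G)$ is a concrete subgroup of a symmetric group on a finite set, the entire statement is in principle a finite calculation, to be carried out once for each~$q$.

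For a fixed~$G = PSL_2(\f_q)$, I would first construct the finite set~$\pc$ of generating pairs modulo conjugation, whose cardinality is~$r\,|Out(G)|$. Using Lemma~\ref{lem-theta-no-surprises} and Lemma~\ref{lem-delta-no-surprises}, together with the explicit action of~$Out(G)$ on~$\pc$, I would realise~$\theta$, $\delta$ and the non-inner generators of~$Out(G)$ as honest permutations of~$\pc$, which generate the subgroup~$H = Out(G) \times \langle \theta, \delta \rangle$ of~$S = S(\pc)$. I would then compute the partition of~$\pc$ into its~$G \times G$-``orbits'' in the sense of Remark~\ref{rmk-orbits}, determining the Young subgroup~$Y$. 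By definition~$\s(G) = C_S(H) \cap Y$.

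The main obstacle is that computing~$C_S(H) \cap Y$ naively inside the full symmetric group on~$\pc$ is infeasible for groups of this size, since the sets~$\pc$ already have several hundred elements. To get around this I would implement the packet decomposition behind Proposition~\ref{prop-prod-wreath-prods}: group the~$H$-orbits on~$\pc$ into~$H$-invariant packets according to their isomorphism type as~$H$-sets and their intersection pattern with the~$G\times G$-``orbits''. Because~$\s(G)$ splits as a direct product of the factors attached to the separate packets, and each such factor is a wreath product~$E_k \wr S_{r_k}$ with~$E_k$ a subquotient of~$H$ and~$r_k \le m^2/|G|$ (Proposition~\ref{prop-prod-wreath-prods}, with~$|Z| = 1$ here), each individual factor is small enough to be handled by a direct centralizer-and-intersection computation.

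Finally I would assemble the direct product over all packets and recognise its isomorphism type. The only genuinely non-routine factors should be the copies of~$D_8$: for every odd~$q$ in the list~$Out(PSL_2(\f_q))$ is a~$2$-group and~$\langle \theta, \delta \rangle$ is a quotient of~$S_3$, so I expect the~$D_8$ factors to arise precisely as~$C_2 \wr S_2$, from packets consisting of two equivalent~$H$-orbits whose self-equivalence group is~$C_2$; counting these packets, and those contributing single~$C_2$ factors, should reproduce the stated exponents, while the bound on~$r_k$ rules out the larger alternating factors. The three triviality assertions (for~$q = 4, 8, 16$) correspond to the degenerate situation in which each packet is a single~$H$-orbit with trivial self-equivalence group, so that~$\s(G) = 1$; I would verify this directly from the orbit data. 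In the end the argument is exactly the algorithm described in Section~\ref{sec-gap}, executed for each value of~$q$ in turn.
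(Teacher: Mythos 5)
Your proposal follows exactly the paper's route: the theorem is established by the GAP computation of $\s(PSL_2(\f_q))$ via the packet decomposition of Proposition~\ref{prop-prod-wreath-prods}, transported to $\gtz$ through Theorem~\ref{thm-main-simple}, precisely as laid out in Section~\ref{sec-gap}. The approach is correct and essentially identical to the paper's.
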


It seems tempting to conjecture that~$\gtz(PSL_2(\f_q)) \cong C_2^a \times D_8^b$, with~$a= b= 0$ when~$q$ is a power of~$2$. Let us now turn to the simple group of order 5616:

\begin{thm}
The group~$\gtz(PSL_3(\f_3))$ is isomorphic to 
\[ C_2 ^{26} \times D_8^6 \times S_3^4 \times S_4^{21} \times S_6^{12} \times S_7^6 \times S_8^3 \times S_9^{11} \times A^3 \times B^8 \times C^5 \]
where 
\[ A = (((C_2 \times C_2 \times C_2 \times C_2 \times C_2) \rtimes A_6) \rtimes C_2) \rtimes C_2  \, , \]
\[ B = ((((C_2 \times D_8) \rtimes C_2) \rtimes C_3) \rtimes C_2) \rtimes C_2 \, ,   \]
and 
\[ C=  (((C_2 \times C_2 \times C_2 \times C_2) \rtimes A_5) \rtimes C_2) \, .   \]
\end{thm}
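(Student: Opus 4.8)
The plan is to reduce everything to a finite computation through Theorem~\ref{thm-main-simple} and then to make that computation tractable by means of Proposition~\ref{prop-prod-wreath-prods}. Since $G = PSL_3(\f_3)$ is simple and non-abelian of order $5616$, Theorem~\ref{thm-main-simple} provides an isomorphism $\gtz(G) \cong \s(G) = C_S(H) \cap Y$, with $S = S(\pc)$, $H = Out(G) \times \langle \theta, \delta \rangle$, and $Y$ the Young subgroup preserving the $G \times G$-``orbits''. The preliminary structural input is the determination of $Out(PSL_3(\f_3))$: the field is prime (no field automorphisms) and $PGL_3(\f_3) = PSL_3(\f_3)$ because $\gcd(3, 3-1) = 1$ (no diagonal automorphisms), so only the graph automorphism survives and $Out(G) \cong C_2$. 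Consequently $H$ is a quotient of $C_2 \times S_3$, of order at most $12$, and by Proposition~\ref{prop-prod-wreath-prods} every factor $E$ that can appear is a subquotient of this small group.

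I would then follow the explicit algorithm of Section~\ref{sec-gap}. One constructs $\pc$ (of cardinality $\ell = r\,|Out(G)| = 2r$) as the orbits of the diagonal $G$ on generating pairs, computes its partition into $G \times G$-``orbits'' to obtain $Y$, and realizes $\theta$, $\delta$ and the nontrivial outer automorphism as explicit permutations of $\{1, \ldots, \ell\}$ using Lemmas~\ref{lem-theta-no-surprises} and~\ref{lem-delta-no-surprises} together with the formula in Proposition~\ref{prop-map-to-centraliser}; these generate $H \le S_\ell$. A direct computation of $C_S(H) \cap Y$ is out of reach for a group of this order, so the essential step is the block/packet decomposition: one partitions the $H$-orbits into packets of mutually equivalent orbits, restricts $H$ and $Y$ to each packet, and computes the local centralizer-intersection there. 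By the argument proving Proposition~\ref{prop-prod-wreath-prods}, $\s(G)$ is the direct product of these local contributions, each of the shape $E \wr S_s$ (or a close relative of it), where $E = N_H(K)/K$ is a subquotient of $H$ and $s$ is the number of equivalent orbits in the block.

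The last step is to name the local groups. The bound $s \le m^2/|G|$, with $m$ the largest conjugacy-class size in $PSL_3(\f_3)$, confines the number of equivalent orbits per block, and hence the degrees of the symmetric/alternating constituents, to a small range (here at most $9$). Blocks with trivial $E$ give the pure factors $S_3, S_4, S_6, S_7, S_8, S_9$; blocks with $E \cong C_2$ and one or two equivalent orbits give the $C_2$ and $D_8 \cong C_2 \wr S_2$ factors; and the packets in which $E \cong C_2$ occurs together with several equivalent orbits give the three larger groups. I expect $A \cong C_2 \wr S_6$ and $B \cong C_2 \wr S_4$ (full wreath products, matching the orders $46080$ and $384$), whereas $C$ should be the index-two ``sum-zero'' subgroup $C_2^4 \rtimes S_5$ of $C_2 \wr S_5$, which arises when the constraint of preserving $G \times G$-``orbits'' kills the central diagonal copy of $E$ in the wreath base (for an odd number of orbits this diagonal copy splits off as a central factor, while for the even cases $A$ and $B$ it does not). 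Matching the computed permutation groups against \gap{StructureDescription} then yields exactly the iterated extensions in the statement, and a double check of the total order against the claimed $2^a 3^b \cdots$ and of the admissible simple factors (via the corollary to Proposition~\ref{prop-prod-wreath-prods}) confirms the result.

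The main obstacle is computational rather than conceptual: the intersection $C_S(H) \cap Y$ simply cannot be formed head-on, so everything rests on the packet decomposition that localizes the problem to manageable pieces. Within that framework, the genuinely delicate part is the precise identification of the three non-obvious factors $A$, $B$, $C$ — in particular distinguishing a full wreath product $C_2 \wr S_s$ from its ``sum-zero'' variant — which is exactly the step the paper flags as slow, requiring \gap{StructureDescription} to be run on each nontrivial factor.
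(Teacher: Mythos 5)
Your proposal follows essentially the same route as the paper: this theorem is a purely computational result, justified by applying Theorem~\ref{thm-main-simple} to identify $\gtz(G)$ with $\s(G)$ and then carrying out the packet decomposition of Proposition~\ref{prop-prod-wreath-prods} in GAP exactly as laid out in Section~\ref{sec-gap}. One small caveat: for $G=PSL_3(\f_3)$ the bound $s\le m^2/|G|$ is far weaker than you state (it allows $s$ well over $100$, not $9$), so the degree bound of $9$ and the precise identifications of $A$, $B$, $C$ come out of the computation itself (via \gap{StructureDescription}) rather than from the a~priori estimate or the heuristic wreath-product guesses.
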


Two more simple groups from our intention remain. It has not been possible to obtain a complete description of~$\gtz(G)$ for these (in a reasonable amount of time), no doubt because of the appearance of simple factors of the form~$A_s$ for~$s$ large (18 and above). At least we have been able to find the corresponding simple factors.

\begin{thm}
The direct product of the simple factors of~$\gtz(A_7)$ is 
\[ C_2^{152} \times C_3^{15} \times A_5^3 \times A_6^3 \times A_7 \times A_8^2 \times A_{10} \times A_{18} \, .   \]
%
\end{thm}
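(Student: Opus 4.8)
The plan is to reduce the statement to a finite (if large) computation with $\s(A_7)$ and then extract the composition factors from the wreath-product decomposition supplied by Proposition~\ref{prop-prod-wreath-prods}. Since $A_7$ is simple and non-abelian, Theorem~\ref{thm-main-simple} gives an isomorphism $\gtz(A_7) \cong \s(A_7)$, so it suffices to list the composition factors of $\s(A_7)$. Recall that $\s(A_7) = C_S(H) \cap Y$, with $S = S(\pc)$, $H = Out(A_7) \times \langle \theta, \delta \rangle$, and $Y$ the Young subgroup attached to the $G \times G$-''orbits''. Here $Out(A_7) \cong C_2$ and $\langle \theta, \delta \rangle$ is a quotient of $S_3$, so $H$ is a subquotient of $C_2 \times S_3$; in particular every subquotient $E_k$ of $H$ has all its composition factors among $C_2$ and $C_3$.

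Following Section~\ref{sec-gap}, I would have GAP build $\pc$ as the set of generating pairs up to the diagonal conjugation action of $A_7$, compute the partition of $\pc$ into $G \times G$-''orbits'' (the blocks of Remark~\ref{rmk-orbits}), and realize $\theta$, $\delta$ and the non-trivial element of $Out(A_7)$ as explicit permutations of $\pc$ using Lemmas~\ref{lem-theta-no-surprises} and~\ref{lem-delta-no-surprises} together with the action formula of Proposition~\ref{prop-map-to-centraliser}. This produces $H$ as a permutation group on $\pc$ and determines $\s(A_7)$. I would then apply the packet decomposition of Proposition~\ref{prop-prod-wreath-prods}: after grouping the $H$-orbits into packets one obtains $\s(A_7) \cong \prod_k E_k \wr S_{r_k}$ with $\sum_k r_k = r(A_7)$ and each $E_k$ a subquotient of $H$. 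Since $A_7$ has trivial centre and its largest conjugacy class (the class of type $4+2+1$) has size $m = 630$, the a priori bound of Proposition~\ref{prop-prod-wreath-prods} reads $r_k \le m^2/|A_7| = 630^2/2520 = 157.5$, comfortably accommodating the value $r_k = 18$ seen in the answer.

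The composition factors are now pure bookkeeping. Writing $E_k \wr S_{r_k} = E_k^{r_k} \rtimes S_{r_k}$, its composition factors are those of $E_k$ taken with multiplicity $r_k$, together with those of $S_{r_k}$, which are $A_{r_k}$ and one copy of $C_2$ when $r_k \ge 5$, and only copies of $C_2$ and $C_3$ when $r_k \le 4$. In GAP this is read off by calling \gap{DisplayCompositionSeries} on each factor and accumulating. Since each $E_k$ only ever contributes $C_2$'s and $C_3$'s, the alternating factors $A_s$ with $s \ge 5$ arise solely from packets with $r_k = s$; summing over all packets yields $A_5^3 \times A_6^3 \times A_7 \times A_8^2 \times A_{10} \times A_{18}$, while the remaining copies of $C_2$ and $C_3$ collect into $C_2^{152}$ and $C_3^{15}$.

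The obstacle here is computational rather than conceptual. The packet with $r_k = 18$ forces a copy of $S_{18}$ inside $\s(A_7)$, whose order is far beyond what GAP can identify via \gap{StructureDescription}; this is precisely why the theorem records only the direct product of the simple factors and not the full isomorphism type, in contrast with the $PSL_2(\f_q)$ cases of Theorem~\ref{thm-conj-psl2q}. Moreover, building $\pc$ and its $G \times G$-partition for a group of order $2520$ is memory-intensive, so, as emphasized in Section~\ref{sec-gap}, the centralizer-in-$S$ and intersection-with-$Y$ computations must be carried out packet by packet rather than globally. The composition factors, unlike the full group structure, survive this restriction and can be tallied one packet at a time.
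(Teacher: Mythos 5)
Your proposal is correct and follows essentially the same route as the paper: reduce to $\s(A_7)$ via Theorem~\ref{thm-main-simple}, compute $\s(A_7)$ packet by packet with GAP as in Section~\ref{sec-gap}, and tally composition factors from the wreath-product decomposition of Proposition~\ref{prop-prod-wreath-prods}, noting that $H$ is a subquotient of $C_2\times S_3$ so the alternating factors $A_s$ ($s\ge 5$) can only come from the $S_{r_k}$ parts. The paper gives no further argument for this theorem beyond exactly this computational methodology, and your accounting of why only the simple factors (and not the full isomorphism type) are reported matches the paper's own explanation.
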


\begin{thm}
The direct product of the simple factors of~$\gtz(M_{11})$ is 
\begin{multline*}
 C_2^{465} \times C_3 ^{46} \times A_5^{10} \times A_6^9 \times A_7^{10} \times A_8^4 \times A_9^4 \times A_{10}^5 \times A_{11}^5 \times A_{12} \times A_{14}^2 \times A_{15}^4 \times A_{16} \times A_{17}^3 \times A_{18}^{12}\\ \times A_{19} \times A_{20}^2 \times A_{23} \times A_{28} \times A_{31} \times A_{33}^2 \, . 
\end{multline*}
\end{thm}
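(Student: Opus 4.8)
The plan is to treat this as a computation, using Theorem~\ref{thm-main-simple} to replace $\gtz(M_{11})$ by the far more tractable group $\s(M_{11})$. Since $M_{11}$ is simple and non-abelian, the theorem gives an isomorphism $\gtz(M_{11}) \cong \s(M_{11})$, so it suffices to identify the simple factors of the latter. A decisive simplification is that $Out(M_{11}) = 1$, so the group $H = Out(G) \times \langle \theta, \delta \rangle$ reduces to $\langle \theta, \delta \rangle$, a homomorphic image of $S_3$; every subquotient $E$ arising in Proposition~\ref{prop-prod-wreath-prods} is therefore a subquotient of $S_3$, with composition factors among $C_2$ and $C_3$ only.

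First I would construct $\pc$ following the recipe of \S\ref{sec-gap}: build $M_{11}$, form $G \times G$, and collect the orbits of the diagonal copy of $G$ acting on generating pairs, discarding non-generating pairs; since $Out(G)=1$ we have $|\pc| = r$. Next I would compute the partition of $\pc$ into $G \times G$-``orbits'' (in the sense of Remark~\ref{rmk-orbits}), and realize $\theta$ and $\delta$ as explicit permutations of $\pc$ via Lemma~\ref{lem-theta-no-surprises} and Lemma~\ref{lem-delta-no-surprises}. With $H = \langle \theta, \delta \rangle$ and $Y$ the Young subgroup attached to the $G \times G$-partition in hand, $\s(M_{11})$ is by definition $C_S(H) \cap Y$, where $S = S(\pc)$.

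Computing this intersection directly inside $S$ is hopeless at this size, so I would instead implement the decomposition of Proposition~\ref{prop-prod-wreath-prods}: $\s(M_{11})$ is a direct product of wreath products $E \wr S_s$, one per block (a block being an equivalence class of $H$-orbits in the sense of that proof), where $E$ is a subquotient of $S_3$ and $s$ is the number of mutually equivalent $H$-orbits. In practice, following \S\ref{sec-gap}, one groups $H$-orbits into packets by comparing conjugacy classes of point stabilizers in $H$ together with the cardinalities of the intersections with the $G \times G$-blocks, and computes the centralizer together with its intersection with the relevant Young subgroup one packet at a time. The simple composition factors of each $E \wr S_s$ are those of $E$ (copies of $C_2$ or $C_3$) together with those of $S_s$ (namely $A_s$ when $s \ge 5$, and $C_2$, $C_3$ for small $s$). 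Assembling these over all blocks, with multiplicities, yields the stated direct product of simple factors. The bound of Proposition~\ref{prop-prod-wreath-prods} gives $s \le m^2/|G|$ with $m = 1584$ the size of the class of elements of order $5$, hence $s \le 316$, comfortably accommodating the observed factors up to $A_{33}$.

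The main obstacle is purely computational: the set $\pc$ for $M_{11}$ is large, so the centralizer-and-intersection step would exhaust time and memory if attempted naively; the packet decomposition is what renders it feasible. A secondary point is that, because several blocks produce symmetric groups $S_s$ with $s$ as large as $33$, obtaining the full isomorphism type of $\s(M_{11})$ (as opposed to its list of composition factors) is not realistic, which is precisely why the statement is phrased in terms of the direct product of simple factors rather than a complete structural description.
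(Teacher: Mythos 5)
Your proposal is correct and follows essentially the same route as the paper: reduce to $\s(M_{11})$ via Theorem~\ref{thm-main-simple}, exploit $Out(M_{11})=1$ so that $H=\langle\theta,\delta\rangle$, and carry out the GAP computation block-by-block using the packet decomposition of Proposition~\ref{prop-prod-wreath-prods} exactly as described in \S\ref{sec-gap}. Your sanity checks (the bound $s \le m^2/|G| \approx 316$ with $m=1584$, and the explanation for why only the simple factors rather than the full isomorphism type are reported) match the paper's own remarks.
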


\subsection{$p$-groups} \label{subsec-p-groups}

We are going to give information on the orders of~$\gtz(G)$ and~$\s(G)$ for a few~$2$-groups. We offer the following table:

\bigskip
\begin{tabular}{|r|r|r|r|r|}
\hline
$n$ & number of groups & $\max |\gtz(G)|$ & $\max |\s(G)|$ & same order (1 or 2) \\
\hline 
3 & 2 & 1 & 1 & 2 (2) \\
4 & 6 & 1 & 1 & 6 (6) \\
5 & 17 & 2 & 1 & 14 (14) \\
6 & 50 & 4 & 2 & 40 (40) \\
7 & 159 & 4 & 16 & 111 (109) \\
\hline
\end{tabular}
\bigskip

The first column contains a number~$n$, indicating that the row is about groups of order~$2^n$. Among all these, we select those which can be generated by~$2$ elements, and which are non-abelian (otherwise~$\gtz(G)$ is not interesting). The next column gives the number of groups which we have kept. The largest order for~$\gtz(G)$, when~$G$ runs among those groups, is recorded in the next column, followed by the maximum for~$|\s(G)|$. Finally, we give the number of groups for which~$| \gtz(G)| = | \s(G) |$ followed in parenthesis by the number of groups for which this common order is $1$ or $2$ (so that $\gtz(G)$ and~$\s(G)$ are at least abstractly isomorphic in these cases).

It may be difficult to comment sensibly on this small-scale analysis. We venture to say that~$\s(G)$ is ``quite often'' abstractly isomorphic to~$\gtz(G)$ (though not always), and that the order of~$\gtz(G)$ seem to grow very slowly with that of~$G$ (attempts to prove theoretical bounds on the order of~$\gtz(G)$ have produced very bad results, and there seems to be a phenomenon to understand here).





\section{Dessins d'enfants} \label{sec-dessins}

Keeping the notation introduced before, we have seen that there is a natural action of~$\gt(G)$ on the set~$\pairs/ Aut(G)$. However, in the process of constructing a map from~$\gtz(G)$ to~$\s(G)$, we have also defined an action of~$\gt(G)$ (in fact, of~$Out(\gb)$) on~$\pc$, the set of pairs in~$\pairs$ up to conjugation. This seems a little {\em ad hoc}. In this section we provide a partial ``explanation'', showing that at least for~$\gal$ (a close cousin of~$\gt(G)$) there are very good reasons for an action to exist on~$\pc$.

\subsection{The category of dessins}

Dessins d'enfants are essentially bipartite graphs drawn on compact, oriented surfaces, forming the~$1$-skeleton of a CW-complex structure (in particular, the complement of the graph is homeomorphic to a disjoint union of open discs). For precise definitions, and for all statements about dessins to follow, we refer to~\cite{pedro}. 

Dessins form a category~$\cat$, and the first striking results of the theory are equivalences between~$\cat$ and various other categories. To name but the most important one: algebraic curves over~$\C$ with appropriate ramification, étale algebras over~$\C(x)$ with a ramification condition, the other two categories obtained by replacing~$\C$ by~$\qb$, and the category of finite sets with a right action of~$F_2$, to be denoted simply by~$\sets$ (our usual notation for the generators of~$F_2$ being~$x, y$).

Some dessins are called {\em regular}. Many definitions are possible; in~$\sets$, an object is regular if and only it is isomorphic to one of the following particular form: take a finite group~$G$ with two distinguished generators~$x$ and~$y$, and let~$F_2$ act on~$G$ on the right via the canonical homomorphism~$F_2 \to G$, using right multiplication. The regular dessins~$(G, x, y)$ and~$(G', x', y')$ (as we will denote them) are isomorphic in~$\sets$ if and only if there is a group isomorphism~$G \to G'$ with~$x \mapsto x'$, $y \mapsto y'$, as the reader may check.

The automorphism group of the regular dessin~$(G, x, y)$ is then~$G$ itself, acting on itself by left multiplication. As a result of this discussion, we see that the set of isomorphism classes of regular dessins with automorphism group isomorphic to a fixed group~$G$ is in bijection with~$\pairs/Aut(G)$, the set of pairs of elements generating~$G$ under the action of~$Aut(G)$. 

Thus we can rephrase the fact that~$\gt(G)$ acts on~$\pairs/Aut(G)$ by saying that~$\gt(G)$ acts on the isomorphism classes of regular dessins with automorphism group~$G$. What is more, one can define an action of~$\gt = \lim_G \gt(G)$ on the isomorphism classes of all dessins, regular or not ({\em loc cit}). 

The other fundamental result is the fact that there is also a natural action of~$\gal$ on isomorphism classes of dessins. This action factorizes through the map $\Phi \colon \gal \longrightarrow  \gt$ (and is indeed instrumental in the very definition of~$\Phi$); the classical result due to Belyi and Grothendieck that the action of~$\gal$ is faithful implies then the injectivity of this homomorphism. In turn one can show that~$\gt$ also acts faithfully, and even that the action on regular dessins is already faithful (theorem 5.7 in~\cite{pedro}).

\subsection{$\Gamma $-dessins}

In~\cite{pedro} we have defined a certain category~$\etq$, whose objects are étale algebras over~$\qb(x)$ satisfying a certain ramification property, and we have built an equivalence of categories between~$\cat$ and~$\etq$. The category~$\etq$ is the one to use in order to define the action of~$\gal$ on isomorphism classes of dessins.

What is more, we proved in {\em loc cit} that each~$\lambda \in \gal$ actually defines a {\em functor}~$F_\lambda \colon \etq \longrightarrow \etq$, inducing the action. This gives more precise information, as we proceed to show. We shall work in a very general context, not for the sheer pleasure of writing abstract nonsense, but out of necessity: the equivalence between~$\cat$ (or the very practical~$\sets$) and~$\etq$ is given by a zig-zag of explicit functors, whose inverses we know very little about beyond the fact that their existence is garanteed by the axiom of choice. This prevents us from being more direct and concrete.

So let~$\ct$ be any category at all. Given a group~$\Gamma $, we can define the category~$\Gamma \ct$ whose objects are the pairs~$(X, \rho )$ where~$X$ is an object of~$\ct$ and~$\rho \colon \Gamma \to Aut_\ct(X)$ is a group homomorphism. (In the sequel we shall write~$Aut(X)$ rather than~$Aut_\ct(X)$ when no confusion can arise.) The morphisms $(X, \rho ) \to (Y, \rho ')$ in~$\Gamma \ct$ are those morphisms~$f \colon X \to Y$ in~$\ct$ which are equivariant in the sense that the following diagram commutes, for any~$g \in \Gamma $: 
\[ \begin{CD}
X @>{\rho (g)}>> X \\
@VfVV        @VVfV \\
Y @>{\rho' (g)}>> Y
\end{CD}
  \]

Now let~$F$ be a self-equivalence of~$\ct$. The operation~$[X] \mapsto [F(X)]$ gives a permutation of the set of equivalence classes of objects in~$\ct$, where we have written~$[X]$ for the class of~$X$. 

However, more is true. Simply assuming that~$F$ is a functor from~$\ct$ to itself, there is an induced homomorphism~$a_X \colon Aut(X) \to Aut(F(X))$, and we can employ it to construct a self-functor~$\tilde F$ of~$\Gamma \ct$. On objects this is defined as~$\tilde F(X, \rho ) = (F(X), a_X \circ \rho )$, while on morphisms~$\tilde F$ is simply the restriction of~$F$. One checks readily that~$\tilde F$ is indeed a functor, that~$\widetilde{F \circ G} = \tilde F \circ \tilde G$, and that~$\tilde F$ is the identity of~$\Gamma \ct$ if~$F$ is the identity of~$\ct$. In particular, if~$F$ is a self-equivalence of~$\ct$, then~$\tilde F$ is a self-equivalence of~$\Gamma \ct$. 

The comments we have made on~$F$ then apply to~$\tilde F$: there is an induced permutation of the set of isomorphism classes of objects in~$\Gamma \ct$. Since this discussion was conducted purely in the language of categories, it is clear that~$\ct$ can be replaced by any category equivalent to it.

We also point out that the group~$Aut(\Gamma )$ acts on the isomorphism classes of objects, by the rule~$\alpha \cdot (X, \rho ) = (X, \rho \circ \alpha )$ (for~$\alpha \in Aut(\gamma )$). When~$\alpha $ is inner, we see readily that this action is trivial (in fact if~$\alpha $ is conjugation by~$t$, then~$\rho (t) \colon X \to X$ is an isomorphism in~$\Gamma \ct$ between~$(X, \rho )$ and~$(X, \rho \circ \alpha )$). Thus we have an induced action of~$Out(\Gamma )$, and it commutes visibly with the permutation induced by~$\tilde F$.

Coming back to~$\etq$, where we know that the action of~$\lambda \in \gal$ is {\em via} some functor~$F_\lambda $, we conclude:

\begin{prop}
There is an action of~$\gal$ on the set of isomorphim classes of objects in~$\Gamma \cat$, for any group~$\Gamma $. The same holds with $\cat$ replaced by any equivalent category, such as~$\sets$. 

There is also an action of~$Out(\Gamma )$ on the same classes of objects, and the two actions commute.

Moreover, the forgetful functor~$\Gamma \cat \to \cat$ induces a~$\gal$-equivariant map between the sets of isomorphism classes.

Finally, these actions restrict to the set of isomorphim classes of faithful, equivariant dessins.
\end{prop}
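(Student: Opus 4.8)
The plan is to deduce everything from the categorical machinery developed just above, applied to the Galois functors $F_\lambda \colon \etq \to \etq$, together with one extra observation: the assignment $F \mapsto \tilde F$ is compatible not only with composition of functors but also with natural transformations. Concretely, I would first check that any natural transformation $\eta \colon F \Rightarrow F'$ between self-functors of $\ct$ lifts to a natural transformation $\tilde\eta \colon \tilde F \Rightarrow \tilde F'$ of self-functors of $\Gamma\ct$, with components $\tilde\eta_{(X,\rho)} := \eta_X$. The only thing to verify is that $\eta_X \colon F(X) \to F'(X)$ is $\Gamma$-equivariant, i.e. that $\eta_X \circ a_X(\rho(g)) = a'_X(\rho(g)) \circ \eta_X$ for all $g$; but $a_X(\rho(g)) = F(\rho(g))$ and $a'_X(\rho(g)) = F'(\rho(g))$, so this is exactly the naturality square of $\eta$ applied to the endomorphism $\rho(g) \colon X \to X$. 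Thus naturality of $\eta$ is literally the required equivariance, and if $\eta$ is an isomorphism so is $\tilde\eta$.

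With this in hand the $\gal$-action is immediate. Since the $F_\lambda$ induce the Galois action on isomorphism classes of $\etq$ and compose up to natural isomorphism ($F_\lambda \circ F_\mu \cong F_{\lambda\mu}$, by \cite{pedro}), the strict composition formula $\widetilde{F_\lambda \circ F_\mu} = \tilde F_\lambda \circ \tilde F_\mu$ together with the lift above yields $\tilde F_\lambda \circ \tilde F_\mu \cong \tilde F_{\lambda\mu}$; hence $\lambda \mapsto [\tilde F_\lambda]$ is a homomorphism from $\gal$ to the group of permutations of isomorphism classes of $\Gamma\etq$, that is, a genuine action. Because the construction is purely categorical and $\Gamma(-)$ carries an equivalence $\ct \simeq \ct'$ to an equivalence $\Gamma\ct \simeq \Gamma\ct'$, the action transports across the equivalences $\cat \simeq \etq$ (and $\cat \simeq \sets$), giving the action on $\Gamma\cat$ and on $\Gamma\sets$. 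The $Out(\Gamma)$-action $\alpha\cdot(X,\rho)=(X,\rho\circ\alpha)$ was already defined, and it commutes with each $\tilde F_\lambda$ on the nose, since $\tilde F_\lambda(X,\rho\circ\alpha) = (F_\lambda(X), a_X\circ\rho\circ\alpha) = \alpha\cdot\tilde F_\lambda(X,\rho)$; so the two actions commute. For the forgetful functor $U\colon \Gamma\cat \to \cat$ one has $U\circ\tilde F_\lambda = F_\lambda\circ U$ by construction, so $U$ descends to a $\gal$-equivariant map of isomorphism-class sets.

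Finally, to see that both actions preserve the faithful objects, note first that faithfulness of $\rho$ (injectivity) is an isomorphism invariant of $\Gamma\ct$: an equivariant isomorphism $f$ conjugates $\rho$ to $\rho'$, whence $\ker\rho = \ker\rho'$. For $Out(\Gamma)$ the claim is clear, since $\rho\circ\alpha$ is injective exactly when $\rho$ is. For $\gal$ I would use that $F_\lambda$ is an \emph{equivalence}, hence faithful, so the induced map $a_X\colon Aut(X) \to Aut(F_\lambda(X))$ is injective; therefore $a_X\circ\rho$ is injective whenever $\rho$ is, and $\tilde F_\lambda(X,\rho)=(F_\lambda(X),a_X\circ\rho)$ is again faithful. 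Hence both actions restrict to the isomorphism classes of faithful equivariant dessins.

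The step requiring the most care is the bookkeeping that turns ``each $\lambda$ gives a self-equivalence $\tilde F_\lambda$'' into ``$\gal$ acts'', namely the compatibility of $F\mapsto\tilde F$ with the coherence isomorphisms $F_\lambda\circ F_\mu\cong F_{\lambda\mu}$ supplied by \cite{pedro}. This is precisely what the natural-transformation lift is for, and it is the one point not already contained in the discussion preceding the proposition; everything else (the $Out(\Gamma)$-action, its commutation with $\gal$, and equivariance of the forgetful functor) is formal. A minor additional input is that the $F_\lambda$ are equivalences — part of their defining property in \cite{pedro} — which is exactly what secures faithfulness in the last clause.
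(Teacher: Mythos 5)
Your proposal is correct and follows the same route as the paper: the proposition is obtained there as an immediate corollary of the preceding categorical discussion (the lift $F \mapsto \tilde F$ from self-functors of $\ct$ to self-functors of $\Gamma\ct$, the strict identities $\widetilde{F\circ G}=\tilde F\circ\tilde G$ and $\widetilde{\mathrm{id}}=\mathrm{id}$, the $Out(\Gamma)$-action $\alpha\cdot(X,\rho)=(X,\rho\circ\alpha)$ and its visible commutation with $\tilde F$), applied to the functors $F_\lambda$ on $\etq$ and transported across the equivalences with $\cat$ and $\sets$; the paper gives no further argument beyond ``we conclude'', and notes that the restriction to faithful objects ``holds obviously''. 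The one place where you go beyond the paper is the lift of natural transformations $\eta\colon F\Rightarrow F'$ to $\tilde\eta\colon\tilde F\Rightarrow\tilde F'$, used to convert the coherence isomorphisms $F_\lambda\circ F_\mu\cong F_{\lambda\mu}$ into isomorphisms in $\Gamma\etq$ and hence into an honest action on isomorphism classes. This is a genuine (if small) gap-filling: the paper's strict composition formula only covers the case where the $F_\lambda$ compose on the nose, and your observation that naturality of $\eta$ at the morphism $\rho(g)\colon X\to X$ is exactly the required $\Gamma$-equivariance of $\eta_X$ is the right way to close it. Your treatment of the faithful case via injectivity of $a_X$ for an equivalence $F_\lambda$ is likewise a correct expansion of what the paper dismisses as obvious.
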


The last sentence mentions {\em faithful} equivariant dessins, that is, those objects~$(X, \rho )$ in~$\Gamma \cat$ such that~$\rho $ is injective. The statement holds obviously.

We point out that the group~$\Gamma $ need not be finite here. This is one reason for not calling it~$G$, which in this paper usually denotes a finite group. Still, the reader may complain that in the Abstract and Introduction, we mentioned~$G$-dessins rather than~$\Gamma $-dessins. The point is that we wanted to develop the properties of~$\Gamma $-dessins in general, then consider a regular dessin~$X$ with automorphism group denoted~$G$ as in the rest of the paper, and {\em then} regard it as a~$G$-dessin, that is, putting~$\Gamma  = G$ ultimately.

The reader may wish to skip ahead to \S\ref{subsec-example} where two concrete examples of $\Gamma $-dessins are presented (with~$\Gamma $ a finite, cyclic group). However a little result is required in order to prove that they are not isomorphic to one another, and we turn to this easy point now.

\subsection{$\Gamma $-objects in~$\sets$}

As happens with many other concepts, the category~$\sets$ provides the most clean-cut statements about $\Gamma $-dessins. Recall from the definitions that an object in~$\Gamma \sets$ is a finite set with an action of~$F_2$ on the right, and a commuting action of~$\Gamma $ on the left (since we usually define the composition in~$Aut(X)$ such that it acts on the left on~$X$, in general).

Let us call an object in~$\Gamma \sets$ {\em regular} when it is regular as an object of~$\sets$ (that is, the concept ignores the~$\Gamma $-action).

\begin{prop} \label{prop-equ-obj-sets}
Let~$G$ and~$\Gamma $ be groups. Consider the objects in~$\Gamma \sets$ which  are regular and have automorphism group isomorphic to~$G$. Then the set of isomorphism classes of such objects in~$\Gamma \sets$ is in bijection with the set of triples~$(g, h, \phi)$ where~$\langle g, h \rangle = G$ and~$\phi \colon \Gamma \to G$ is a homomorphism, modulo the relation 
\[ (g, h, \phi_1) \sim (g', h', \phi_2)  \]
which holds (by definition) if and only if there is an automorphism~$\alpha \in Aut(G)$ and~$t \in G$ such that~$\alpha (g)= g'$, $\alpha (h) = h'$, and~$\alpha (\phi_1(\gamma ) ) = t^{-1} \phi_2(\gamma ) t$, for all~$\gamma \in \Gamma $.
\end{prop}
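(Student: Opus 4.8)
The plan is to first fix a convenient model for the regular objects under consideration, then read off the data $(g,h,\phi)$ from such a model, and finally compute the isomorphisms in $\Gamma\sets$ directly.

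First I would set up the model. Given a generating pair $(g,h)$ of $G$, let $X_{g,h}$ be the regular object of $\sets$ whose underlying set is $G$, with $x$ and $y$ acting on the right by multiplication by $g$ and $h$. As recalled above, $Aut(X_{g,h})$ consists exactly of the left translations $\ell_t\colon a\mapsto ta$, and $t\mapsto\ell_t$ is an isomorphism $G\to Aut(X_{g,h})$. A homomorphism $\phi\colon\Gamma\to G$ then produces the object $(X_{g,h},\rho_\phi)$ of $\Gamma\sets$, where $\rho_\phi(\gamma)=\ell_{\phi(\gamma)}$. I would check that the assignment $(g,h,\phi)\mapsto[(X_{g,h},\rho_\phi)]$ hits every isomorphism class in the statement: given a regular $(X,\rho)$ with $Aut(X)\cong G$, the classification of regular objects recalled above provides an isomorphism of the underlying dessin $X$ with some $X_{g,h}$; transporting $\rho$ along it and using $Aut(X_{g,h})\cong G$ turns $\rho$ into a homomorphism $\phi$, so $(X,\rho)\cong(X_{g,h},\rho_\phi)$. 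This gives surjectivity onto the relevant isomorphism classes.

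Next I would describe the $\sets$-isomorphisms $f\colon X_{g,h}\to X_{g',h'}$. Both objects are transitive $F_2$-sets, so such an $f$ exists if and only if there is a group automorphism $\alpha$ of $G$ with $\alpha(g)=g'$ and $\alpha(h)=h'$; since $g',h'$ generate $G$, this $\alpha$ is unique when it exists. A short computation shows that $\alpha$ itself, viewed as a map of underlying sets, is already such an isomorphism: it intertwines right multiplication by $g$ with right multiplication by $\alpha(g)=g'$, and likewise for $h$. As the isomorphisms form a torsor under $Aut(X_{g',h'})$ (by post-composition), every isomorphism has the form $f=\ell_t\circ\alpha$, that is $f(a)=t\,\alpha(a)$ for a unique $t\in G$. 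Imposing $\Gamma$-equivariance on $f$, the condition $f\circ\rho_{\phi_1}(\gamma)=\rho_{\phi_2}(\gamma)\circ f$ unwinds, using $\rho_{\phi_i}(\gamma)=\ell_{\phi_i(\gamma)}$, into $t\,\alpha(\phi_1(\gamma))\,\alpha(a)=\phi_2(\gamma)\,t\,\alpha(a)$ for all $a$; since $\alpha$ is a bijection this is equivalent to $\alpha(\phi_1(\gamma))=t^{-1}\phi_2(\gamma)t$ for all $\gamma\in\Gamma$. Thus $(X_{g,h},\rho_{\phi_1})\cong(X_{g',h'},\rho_{\phi_2})$ in $\Gamma\sets$ exactly when $\alpha$ and $t$ can be found with $\alpha(g)=g'$, $\alpha(h)=h'$ and $\alpha(\phi_1(\gamma))=t^{-1}\phi_2(\gamma)t$, which is precisely the stated relation. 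This iff-statement simultaneously shows the assignment is well defined on equivalence classes and injective, and together with the surjectivity above it yields the claimed bijection.

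I expect the only real obstacle to be bookkeeping: keeping the left $\Gamma$- and $Aut$-actions straight against the right $F_2$-action, and verifying that the identification $Aut(X_{g,h})\cong G$ (hence the passage from $\rho$ to $\phi$) is canonical enough that the ambiguity in choosing the model is absorbed exactly by the $Aut(G)$- and conjugation-ambiguities in the equivalence relation. Once the conventions are pinned down, both key computations — the form $a\mapsto t\,\alpha(a)$ of the $\sets$-isomorphisms and the equivariance identity — are short.
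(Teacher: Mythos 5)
Your proof is correct and follows essentially the same route as the paper: both reduce to the model with underlying set $G$, normalize an arbitrary isomorphism by a left translation $\ell_t$ with $t=f(1)$ to extract the automorphism $\alpha$, and read off the conjugation condition $\alpha(\phi_1(\gamma))=t^{-1}\phi_2(\gamma)t$ from equivariance. The only difference is organizational — you first classify the $\sets$-isomorphisms as $\ell_t\circ\alpha$ and then impose $\Gamma$-equivariance, whereas the paper starts from a $\Gamma\sets$-isomorphism and normalizes it — and you spell out the surjectivity step that the paper declares clear.
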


\begin{proof}
It is clear from the definitions that a triple~$(g, h, \phi)$ does define a regular object in~$\Gamma \sets$, and that each such object can be obtained in this way. What needs to be checked is the condition expressing that~$(g, h, \phi_1)$ and~$(g', h', \phi_2)$ yield isomorphic objects in~$\Gamma \sets$. 

So assume that there is~$\alpha_1 \colon G \to G$ giving such an isomorphism; that is, $\alpha $ is a map of sets which is equivariant with respect to both the~$F_2$-actions on the right, and the~$\Gamma $-actions on the left. Let~$t = \alpha_1(1)$ and define~$\alpha_2 \colon G \to G$ by~$\alpha _2(g) = t^{-1}g$. Finally put~$\alpha = \alpha_2 \circ \alpha_1$. We have~$\alpha (1) = 1$, and as we know that~$\alpha $ commutes with the~$F_2$-actions on the right, implying in particular that $\alpha (x) = \alpha (1 \cdot x )= \alpha (1) \cdot x' = x'$ and $\alpha (y) = y'$, it follows easily that~$\alpha $ is in fact a homomorphism. Moreover for~$\gamma \in \Gamma $ we have $\alpha (\phi_1( \gamma   )) = \alpha_2( \alpha_1( \phi_1(\gamma ) \cdot 1)) = \alpha_2( \phi_2(\gamma ) \cdot \alpha_1 (1)) = t^{-1} \phi_2(\gamma ) t$.

It is straightforward to reverse this argument and prove, conversely, that whenever~$\alpha $ and~$t$ exist, the objects defined by~$(g, h, \phi_1)$ and~$(g', h', \phi_2)$ are indeed isomorphic in~$\Gamma \sets$.
\end{proof}

For~$\Gamma = 1$ we find, as we already know, that the set of isomorphism classes of regular dessins with~$G$ as automorphism group is in bijection with~$\pairs/Aut(G)$. Of more interest is the following:

\begin{coro}
For~$\Gamma = G$, the set of isomorphism classes in~$G \sets$ of faithful, regular objects, with~$G$ as automorphism group, is in bijection with~$\pc$.

In particular~$\gal$ acts on~$\pc$ as well as~$\pairs/Aut(G)$, and the map $$\pc \longrightarrow  \pairs/Aut(G)$$ is~$\gal$-equivariant.
\end{coro}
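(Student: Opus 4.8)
The plan is to read everything off Proposition~\ref{prop-equ-obj-sets} in the special case $\Gamma = G$, and to recognise the resulting equivalence relation as the defining relation of $\pc$. First I would observe that a regular object $(X,\rho)$ with automorphism group $G$ is faithful exactly when the homomorphism called $\phi$ in that proposition is injective; since $\phi\colon G \to G$ is an endomorphism of a finite group, injectivity is equivalent to $\phi \in Aut(G)$. Thus Proposition~\ref{prop-equ-obj-sets} identifies the isomorphism classes in question with the set of triples $(g,h,\phi)$ satisfying $\langle g, h\rangle = G$ and $\phi \in Aut(G)$, taken modulo the relation recalled there.

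Next I would write down the candidate bijection
\[ (g, h, \phi) \longmapsto [\phi^{-1}(g), \phi^{-1}(h)]_c \in \pc \, . \]
Surjectivity is immediate, since $(g,h,\mathrm{id})$ is sent to $[g,h]_c$. The content is that the map descends to the quotient and is injective, and this is where the computation lives. Unwinding the relation of Proposition~\ref{prop-equ-obj-sets}, the condition $\alpha(\phi_1(\gamma)) = t^{-1}\phi_2(\gamma)t$ for all $\gamma$ says precisely that $\alpha \circ \phi_1 = c_t \circ \phi_2$ in $Aut(G)$, where $c_t$ denotes conjugation $a \mapsto t^{-1}at$. Solving for $\phi_2^{-1} = \phi_1^{-1}\alpha^{-1}c_t$ and substituting $g' = \alpha(g)$, a short manipulation (using $\alpha^{-1}c_t\alpha = c_{\alpha^{-1}(t)}$) shows that $\phi_2^{-1}(g')$ is the conjugate of $\phi_1^{-1}(g)$ by $s := \phi_1^{-1}(\alpha^{-1}(t))$, and likewise for $h$; hence the two images coincide in $\pc$, proving well-definedness. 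Conversely, if the images agree, so that $\phi_2^{-1}(g') = s^{-1}\phi_1^{-1}(g)s$ and similarly for $h$ for some $s \in G$, I would set $\alpha = \phi_2 \circ c_s \circ \phi_1^{-1}$ and $t = \phi_2(s)$; then $\alpha(g) = g'$ and $\alpha(h) = h'$, while the identity $\phi_2 \circ c_s = c_{\phi_2(s)} \circ \phi_2$ gives $\alpha \circ \phi_1 = c_t \circ \phi_2$, exhibiting the required equivalence. This establishes the desired bijection between $\pc$ and the isomorphism classes of faithful, regular $G$-objects with automorphism group $G$.

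For the ``in particular'' clause I would simply transport the action. The preceding proposition produces an action of $\gal$ on the isomorphism classes of objects of $G\cat \simeq G\sets$, and its last sentence guarantees that this action restricts to the faithful objects; regularity and the isomorphism type of the automorphism group are preserved by a self-equivalence, so $\gal$ acts on exactly the set of classes just described, i.e.\ on $\pc$. The same proposition gives an action on $\pairs/Aut(G)$ (the case $\Gamma = 1$, equivalently the regular dessins with automorphism group $G$) and asserts that the forgetful functor $G\cat \to \cat$ induces a $\gal$-equivariant map. Under our bijection the forgetful functor sends the class of $(g,h,\phi)$ to the class of the underlying dessin $(g,h)$, that is, to $[g,h] \in \pairs/Aut(G)$; since $\phi \in Aut(G)$ we have $[\phi^{-1}(g), \phi^{-1}(h)] = [g,h]$, so this induced map is exactly the natural projection $\pc \to \pairs/Aut(G)$. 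Hence that projection is $\gal$-equivariant, as claimed.

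I expect the only real obstacle to be the middle paragraph: aligning the conjugation conventions so that the explicit relation of Proposition~\ref{prop-equ-obj-sets} matches conjugation in $\pc$ in both directions. Everything else is a direct appeal to the two propositions already proved.
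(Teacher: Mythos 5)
Your proposal is correct and follows essentially the same route as the paper: the bijection $(g,h,\phi)\mapsto[\phi^{-1}(g),\phi^{-1}(h)]_c$ is exactly the paper's normalization step (taking $\alpha=\phi^{-1}$ in Proposition~\ref{prop-equ-obj-sets} to reduce every triple to the form $(g,h,\mathrm{id})$), after which the equivalence relation visibly becomes conjugation. The only difference is presentational — you verify well-definedness and injectivity on arbitrary representatives where the paper first passes to normal forms — and your conjugation bookkeeping checks out.
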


\begin{proof}
Start with an object~$(g, h, \phi)$. Since we consider only faithful objects, and since~$\Gamma = G$ here, the map~$\phi$ is an automorphism. Taking~$\alpha = \phi^{-1}$ in the proposition, we see that the same object is represented by $(g', h', id)$ where~$g' = \phi^{-1}(g)$, $h' = \phi^{-1}(h)$ (and~$id$ is the identity map).

We conclude that the objects under consideration can be represented by  triples of the form~$(g, h, id)$. By the proposition, $(g_1, h_1, id)$ and $(g_2, h_2, id)$ represent isomorphic objects precisely when there is an automorphism~$\alpha $ of the form~$\alpha (\gamma   ) = t^{-1} \gamma t$, that is an inner automorphism, taking~$g_1$ to~$g_2$ and~$h_1$ to~$h_2$. In the end the set of isomorphism classes is precisely~$\pc$.
\end{proof}

Of course this falls short of a proof that~$\gt$, let alone~$\gt(G)$ or~$Out(\gb)$, acts on~$\pc$, but this corollary makes the result much less surprising and much more natural. Also note that our {\em ad hoc} arguments to the effect that~$Out(\gb)$ does act on this set do not guarantee any compatibility with the action of the image of~$\gal \to Out(\gb)$.

\subsection{A complete example; cyclic dessins} \label{subsec-example}

We follow the usual workflow for dessins. We start with an informal picture of a dessin on the sphere: 

\begin{center}
\includegraphics[width=0.25\textwidth]{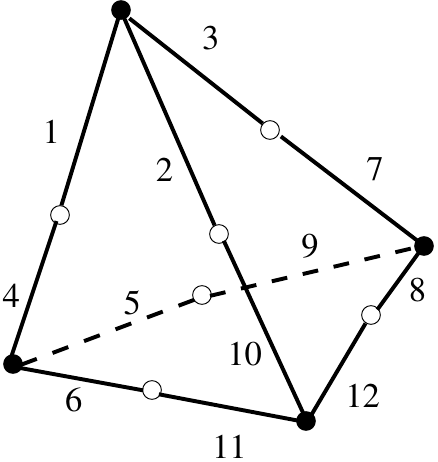}
\end{center}

The theory guarantees that enough information is conveyed in the picture to define an object unambiguously. To proceed with this, we move to~$\sets$. Having numbered the ``darts'' ($=$ edge between a black vertex and a white vertex), we write down the two permutations~$x$ and~$y$ of the set~$\{ 1, \ldots , 12 \}$ which take each dart to the next one in the positive rotation around the incident black, resp.\ white, vertex. These are 
\[ x= (123)(456)(789)(10, 11, 12) ~\textnormal{and}~ y= (14)(2, 10)(37)(59)(6, 11)(8, 12) \, .   \]
Then~$X = (\{ 1, \ldots, 12 \}, x, y)$ is our object in~$\sets$. The subgroup~$G$ of~$S_{12}$ generated by~$x$ and~$y$ has order~$12$, and is in fact isomorphic to~$A_4$; it acts freely and transitively, and it follows that~$X$ is regular. For the rest of the discussion, we identify~$G$ with the set~$\{ 1, \ldots, 12 \}$, by identifying~$g \in G$ with the image of~$1$ under~$g$, and the natural action of~$G$ on this set is by right multiplication.

The automorphism group of~$X$ is given by all the multiplications by elements of~$G$ {\em on the left} on~$\{ 1, \ldots, 12 \}=G$. Thus this group is (isomorphic to) $G$ itself. For example the permutation~$\tilde x$ of the set~$\{ 1, \ldots, 12 \}$ corresponding to the action of~$x$ by left multiplication is the unique automorphism of~$X$ taking~$1$ to~$2$; from the picture we know that this must be the rotation around the black vertex incident with the dart~$1$, that is 
\[ \tilde x = (123)(4, 10, 7)(6, 12, 9)(11, 8, 5) \, .   \]
(This can also be checked by computation.) A similar reasoning gives 
\[ \tilde y = (14)(8, 12)(2, 5)(3, 6)(10, 9)(11, 7) \, .   \]
See Example 3.8 in~\cite{pedro} for more details. 

Let~$C_3 = \langle r \rangle$ be the cyclic group of order~$3$, and let us define $C_3$-dessins with~$X$ as the underlying dessin. Define two homomorphisms~$\phi_1, \phi_2 \colon C_3 \to G$ by~$\phi_1(r) = x$ and~$\phi_2(r)= x^{-1}$. Then~$X_1 = (G, x, y, \phi_1)$ and~$(G, x, y, \phi_2)$ are~$C_3$-dessins. 

Let us show that they are not isomorphic. If they were, by proposition~\ref{prop-equ-obj-sets} there would exist~$\alpha \in Aut(G)$ and~$t \in G$ such that~$\alpha (x) = x$, $\alpha (y) = y$ and~$\alpha (\phi_1(r)) = t^{-1} \phi_2(r) t $. The first two conditions impose~$\alpha = Id$ of course, and the last one reads~$x = t^{-1} x^{-1} t$. However~$x$ and~$x^{-1}$ are not conjugate in~$G$, so~$t$ cannot exist.

To explore the Galois action, we continue with the usual workflow, and compute a Belyi map. A possible choice is 
\[ f(z) = -64 \frac{(z^3+1)^3} {(z^3 - 8)^3 z^3}  \]
(taken from~\cite{zvonkine}). The simple fact that the coefficients of~$f$ are in~$\q$ means that~$X$ is fixed by~$\gal$. However, we shall see that~$X_1$ and~$X_2$ are not.

First we can ask a computer to produce a picture of~$f^{-1}([0,1])$.

\begin{center}
\includegraphics[width=0.7\textwidth]{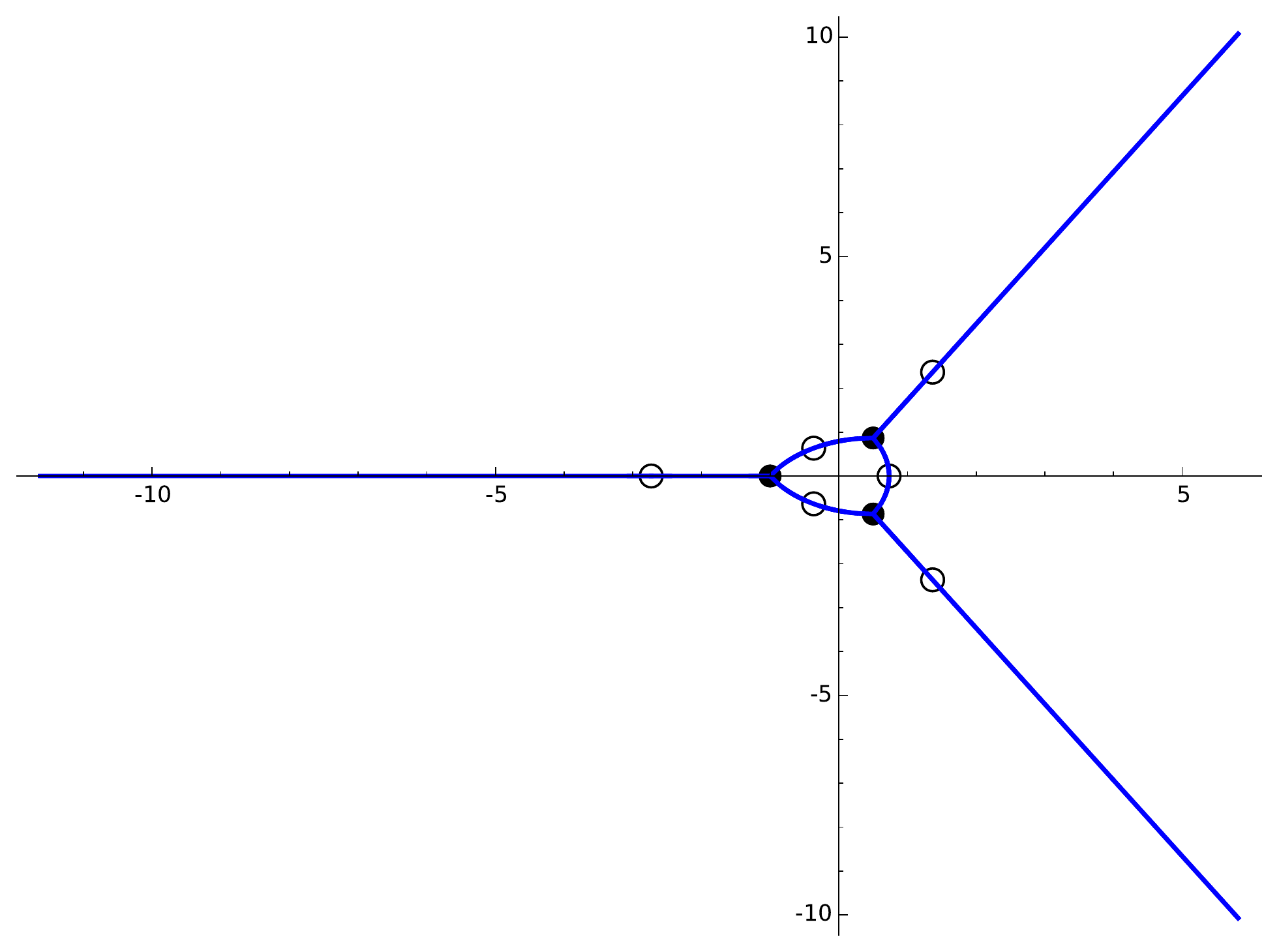}
\end{center}

Here the white vertex on the real axis is~$w= - (1+\sqrt 3)$, a root of~$w^2 + 2w - 2 = 0$. The dart from~$\infty$ to~$w$ we number as~$1$, and we number all the others so that~$x$ and~$y$ are as above.

We know that there must exist Moebius transformations inducing the actions of~$\tilde x$ and~$\tilde y$ as above, and we find easily that they are 
\[ \mu_x \colon z \mapsto j^2 z  \quad\textnormal{and}\quad \mu_y \colon z\mapsto \frac{-z +2~} {~~ z+1}  \]
respectively, where~$j = e^{\frac{2i\pi} {3}}$. (The use of~$j^2$ rather than~$j$ mirrors the fact that we consider the positive rotation around~$\infty$, which is also the clockwise rotation around~$0$.) We can now think of~$X$ as the Belyi pair~$(\p, f)$ and of~$Aut(X)$ as the group generated by~$\mu_x$ and~$\mu_y$. The~$C_3$-dessins~$X_1$ and~$X_2$ are obtained from~$X$ by throwing in the homomorphism~$C_3 \to Aut(X)$ mapping~$r$ to~$\mu_x$ or~$\mu_x^{-1}$.

If~$\lambda \in \gal$ satisfies~$\lambda (j) = j^2$ (and there are such elements!), then the action of~$\lambda $ exchanges~$X_1$ and~$X_2$. [In fact, since the dessin~$X$ is fixed by~$\gal$, we have a homomorphism~$\gal \to Out(Aut(X))$. The non-trivial element of~$Out(A_4) \cong C_2$ sends~$x$ to~$x^{-1}$ and~$y$ to~$y^{-1} = y$.]

In a nutshell: {\em the tetrahedron can be made into a~$C_3$-dessin in two non-isomorphic ways, by picking a rotation of order~$3$ whose axis carries a black vertex and the centre of the opposite face; the two choices are enabled by the two possibilities for the orientation; these are interchanged by the action of~$\gal$.}

\bigskip

More generally, let us call a dessin {\em cyclic} when it is a~$C_n$-equivariant dessin for some~$n$. Starting with a regular dessin~$(G, x, y)$, a cyclic structure on it is simply given by an element of~$G$ of order dividing~$n$ ; the non-isomorphic cyclic structures correspond to the conjugacy classes of such elements. If the dessin is fixed by the Galois group, then~$\gal$ permutes these conjugacy classes.

\section{Concluding comments}

In this paper we have explored the properties of the groups~$\gt(G)$ and~$\s(G)$, from first principles. Motivation for this is twofold: on the one hand, each of these acts on the set of (isomorphism classes of) regular dessins with~$G$ as automorphism group, extending the action of~$\gal$, and this is our finest information about the latter action ; on the other hand, $\gal$ injects in the inverse limit~$\gt$ of all the groups~$\gt(G)$. By contrast one cannot form an inverse limit with the groups~$\s(G)$, but they are much easier to compute with, and in good cases $\s(G)$ agrees with~$\gt(G)$, eg when~$G$ is simple and non-abelian. We have also tried, using the apparatus of~$G$-dessins, to give a conceptual explanation for the existence of~$\s(G)$.

Many people have attempted to classify the generating pairs up to conjugacy or up to isomorphism, within their favorite finite group~$G$. This may or may not have been motivated by the enumeration of dessins. We believe that, once this is done, trying to compute~$\gt(G)$ is a natural idea, that will lead to a wealth of new information.

We shall conclude with a few open problems for the reader. Some of these have been implicitly touched upon in the text. There are many other questions, which are too vague to be mentioned here.

Obviously one may ask for computations of~$\gt(G)$ or~$\gtz(G)$ for various groups~$G$, and obtaining results ``by hand'', that is, without computers, would certainly have value. For example, in a subsequent publication we shall describe~$\gtz(PSL_2(\f_q))$, confirming the pattern emerging in theorem~\ref{thm-conj-psl2q}. It would be interesting to have many other such results available, so as to understand better what it is about~$G$ that makes~$\gt(G)$ large or small. The question of {\em intuition} is wide open.

Intuition is much needed for the next problem: find an entire (infinite) family of groups~$(G_i)_{i \in I}$ such that you can compute the inverse limit 
\[ \lim_{i \in I} \, \gt(G_i) \, ;  \]
then, give an interpretation of the map~$\gal \to \gt \to \lim_i \gt(G_i)$. This would be a direct generalization of the cyclotomic character (obtained for cyclic groups, see the introduction). In this paper, in a sense, we have done this for the dihedral groups, unfortunately obtaining a trivial answer (see proposition~\ref{prop-dihedral-summary} and the subsequent discussion).

Perhaps less ambitious, but already quite hard, is the question of finding a single explicit element in~$\lim_i \gt(G_i)$. Recall that, beyond the identity and complex conjugation, exhibiting elements of~$\gal$ is no simple matter at all. With any luck, the problem at hand might be less difficult.

In \S\ref{subsec-p-groups} we have mentioned that there is no known bound on the order of the group~$\gt(G)$. It is an exciting problem to find one, since it would provide a bound for the ``order'' of the profinite group~$\gal$, in some sense.

Finally, if~$\gt$ is similar to~$\gal$ in any reasonable sense, then it should be interesting to have a look at its closed subgroups of index~$2$, or equivalently at the (continuous) homomorphisms~$\gt \to \f_2$ ; they form the group which would be written~$H^1(\gt, \f_2)$ in cohomological notation. Indeed, the cohomology ring $H^*(\gal, \f_2)$ is generated by its elements of degree~$1$ (part of Milnor's conjecture, now a theorem by Voevodsky, states that this is true for any field replacing~$\q$, showing the depth of this result). It follows from the fact that the abelianization map~$\gal \to \hat \z$ can be lifted to a map~$\gt \to \hat\z$, as we have seen, that any map~$\gal \to \f_2$ can also be lifted to~$\gt$ ; in other words, the homomorphism 
\[ H^1(\gt, \f_2) \longrightarrow H^1(\gal, \f_2)  \]
is surjective. One may ask whether it is injective as well, that is: does~$\gt$ have homomorphisms onto~$\f_2$ which are identically trivial on~$\gal$? A probably harder question being: what is the abelianization of~$\gt$?

\bibliography{myrefs}
\bibliographystyle{amsalpha}

\end{document}